\newtheorem{theor}{Theorem}
\newtheorem{lemma}[theor]{Lemma}
\newcommand{\Z}{\mathbb{Z}}
\newcommand{\R}{\mathbb{R}}
\newcommand{\w}{\mathbf{w}}
\newcommand{\x}{\mathbf{x}}
\newcommand{\y}{\mathbf{y}}
\newcommand{\z}{\mathbf{z}}
\newcommand{\0}{\mathbf{0}}
\newcommand{\ep}{\epsilon}
\newcommand{\ind}{\mathbf 1}
\DeclareMathOperator{\uniform}{Uniform \,}
\DeclareMathOperator{\dist}{dist}
\newenvironment{proof}{\noindent{\scshape Proof.}}{\hspace{2mm} $\square$}
\begin{document}

\begin{frontmatter}

\title     {Survival and extinction results for a patch \\ model with sexual reproduction}
\runtitle  {Survival and extinction results for a patch model with sexual reproduction}
\author    {N. Lanchier\thanks{Research supported in part by NSF Grant DMS-10-05282.}}
\runauthor {N. Lanchier}
\address   {School of Mathematical and Statistical Sciences, \\ Arizona State University, \\ Tempe, AZ 85287, USA.}

\begin{abstract} \ \
 This article is concerned with a version of the contact process with sexual reproduction on a graph with two levels
 of interactions modeling metapopulations.
 The population is spatially distributed into patches and offspring are produced in each patch at a rate proportional
 to the number of pairs of individuals in the patch (sexual reproduction) rather than simply the number of individuals
 as in the basic contact process.
 Offspring produced at a given patch either stay in their parents' patch or are sent to a nearby patch with some fixed
 probabilities.
 Specifically, we prove lower and upper bounds for the probability of long-term survival for the process starting with a
 single fully occupied patch.
 Our main finding is that, with probability close to one and for a certain set of parameters, the metapopulation survives
 in the presence of nearest neighbor interactions while it dies out in the presence of long range interactions, suggesting
 that the best strategy for the population to expand in space is to use intermediate dispersal ranges.
 This result is due to the presence of a so-called strong Allee effect induced by the birth mechanism and does not hold
 for the basic contact process.
\end{abstract}

\begin{keyword}[class=AMS]
\kwd[Primary ]{60K35}
\end{keyword}

\begin{keyword}
\kwd{Interacting particle system, block construction, duality, martingales, optimal stopping theorem, large deviation
 estimates, sexual reproduction, metapopulation, Allee effect.}
\end{keyword}

\end{frontmatter}


\section{Introduction}
\label{sec:intro}

\indent The term Allee effect refers to a certain process that leads to decreasing net population growth with decreasing
 density \cite{allee_1931}.
 In vase the growth rate becomes negative at low density, this monotone relationship results in the existence of a so-called Allee
 threshold below which populations are at high risk of being driven toward extinction.
 This phenomenon may be due to various ecological factors:
 failure to locate mates, inbreeding depression, failure to satiate predators, lack of cooperative feeding, etc.
 Research on this topic is copious and is reviewed in~\cite{courchamp_berec_gascoigne_2009} but rigorous mathematical analyses of
 stochastic spatial models that include an Allee effect are much more limited.

\indent In the model proposed by Borrello \cite{borrello_2012}, each site of the infinite regular lattice represents a patch that
 can host a local population, and a strong Allee effect is included in the form of a varying individual death rate taking a larger
 value for local populations below some threshold.
 The model is used to show that, when only small flocks of individuals can migrate from patch to patch, the metapopulation goes
 extinct whereas survival is possible if large enough flocks of individuals can migrate.
 The author has also used the framework of interacting particle systems to study the consequence of an Allee effect \cite{lanchier_2013}.
 His model is a modification of the averaging process that also includes a threshold:
 local populations below this threshold go extinct whereas local populations above this threshold expand to their carrying capacity,
 each at rate one.
 The key component in this work is the topology of the network of interactions rather than the size of the migrating flocks, and
 the analysis of the process starting from a single occupied patch on various graphs indicates that the probability of long-term
 survival of the metapopulation decreases to zero as the degree of the network of interactions increases to infinity.
 This result suggests that long range dispersal promotes extinction of metapopulations subject to a strong Allee effect.

\indent The modeling approach of the present paper is somewhat different.
 The model we propose is a version of the contact process with sexual reproduction \cite{neuhauser_1994, noble_1992} on a
 graph that includes two levels of interactions modeling metapopulations:
 individuals are produced within each patch at a rate proportional to the number of pairs of individuals in the patch rather
 than a rate simply proportional to the number of individuals in the patch.
 This birth mechanism (sexual reproduction) reflects the difficulty to locate mates in patches at low density, which has been identified
 by ecologists as one of the most common causes of Allee effect.
 In particular, while the Allee effect is forced into the model in the form of a threshold parameter in
 both \cite{borrello_2012, lanchier_2013}, it is on the contrary naturally induced by the birth mechanism in the model considered
 in this paper.
 Using block constructions and duality techniques together with large deviation estimates and martingale theory, the main objective
 is to study the survival probability of the process starting with a single fully occupied patch.


\section{Model description and main results}
\label{sec:results}

\indent For simplicity and to avoid complicated expressions later, we focus on the one-dimensional case, but all our results easily
 extend to higher dimensions.
 Each integer $x \in \Z$ represents a patch that can host a local population of up to $N$ individuals, and it is convenient to think
 of each site as a set of spatial locations that can be either empty or occupied by one individual.
 By convention, we use bold letters to denote these spatial locations:
 $$ \x := (x, j) \in \Z \times K_N := \Z \times \{1, 2, \ldots, N \} $$
 are the possible spatial locations at site/patch $x$.
 The model we consider is a continuous-time Markov chain whose state at time $t$ is a spatial configuration
 $$ \eta_t : \Z \times K_N \longrightarrow \{0, 1 \} \quad \hbox{where} \quad 0 = \hbox{empty} \quad \hbox{and} \quad 1 = \hbox{occupied}. $$
 Each individual dies at rate one.
 Offspring produced at a given patch are sent to a spatial location chosen uniformly at random either from the parents' patch or from
 a neighboring patch.
 In either case, offspring are produced at a rate proportional to $N$ times the fraction of occupied pairs of spatial locations and
 the birth is suppressed if the target location is already occupied.
 The proportionality constant is denoted by $a$ for offspring sent within their parents' patch and by $b$ for offspring sent outside
 their parents' patch.
 Sexual reproduction is modeled by the fact that the birth rate is related to the number of occupied pairs and the reason for multiplying
 by $N$ is to have births and deaths occurring at the same time scale.
 To define the dynamics more formally, we write
 $$ x \sim y \quad \hbox{if and only if} \quad x \neq y \ \hbox{and} \ |x - y| \leq M $$
 where $M$ is the dispersal range, and define the projection map
 $$ \pi \ : \ \x := (x, j) \in \Z \times K_N \ \mapsto \ \pi (\x) := x \in \Z. $$
 For all $\x \in \Z \times K_N$, we also define the sets
 $$ \begin{array}{rcl}
     A (\x) & := & \hbox{set of potential parents' pairs within the patch containing $\x$} \vspace*{3pt} \\
            & := & \{(\y, \z) \in (\Z \times K_N)^2 : \y \neq \z \ \hbox{and} \ \pi (\x) = \pi (\y) = \pi (\z) \} \vspace*{9pt} \\
     B (\x) & := & \hbox{set of potential parents' pairs in the neighborhood of the patch containing $\x$} \vspace*{3pt} \\
            & := & \{(\y, \z) \in (\Z \times K_N)^2 : \y \neq \z \ \hbox{and} \ \pi (\x) \sim \pi (\y) = \pi (\z) \}. \end{array} $$
 The dynamics is then described by the Markov generator
\begin{equation}
\label{eq:micro-model}
  \begin{array}{rcl}
   L_- f (\eta) & = & \displaystyle \sum_{\x} \ \ [f (\eta_{\x, 0}) - f (\eta)] \vspace*{4pt} \\ & + &
                      \displaystyle \sum_{\x} \ \bigg(\frac{a}{N (N - 1)} \sum_{(\y, \z) \in A (\x)} \eta (\y) \,\eta (\z) \bigg) \
                       [f (\eta_{\x, 1}) - f (\eta)] \vspace*{4pt} \\ & + &
                      \displaystyle \sum_{\x} \ \bigg(\frac{1}{2M} \ \frac{b}{N (N - 1)} \sum_{(\y, \z) \in B (\x)} \eta (\y) \,\eta (\z) \bigg) \
                       [f (\eta_{\x, 1}) - f (\eta)] \end{array}
\end{equation}
 where configuration $\eta_{\x, i}$ is obtained from $\eta$ by setting the state at $\x$ to $i$.
 We shall call this process the microscopic representation.
 To study the model, it is convenient to also consider its mesoscopic representation that keeps track of the metapopulation at the patch
 level rather than at the individual level.
 This new process is simply obtained by setting
 $$ \xi_t (x) \ := \sum_{\pi (\x) = x} \eta_t (\x) \quad \hbox{for all} \quad (x, t) \in \Z^d \times \R_+. $$
 In words, the process counts the number of individuals at each patch.
 Note that, since the particular locations of the individuals within each patch is unimportant from a dynamical point of view, this new
 process is again a Markov process, and its Markov generator is given by
\begin{equation}
\label{eq:macro-model}
  \begin{array}{rcl}
   L_+ f (\xi) & = & \displaystyle \sum_x \ \xi (x) \,[f (\xi_{x-}) - f (\xi)] \vspace*{4pt} \\ & + &
                     \displaystyle \sum_x \ \frac{a}{N (N - 1)} \ \ \xi (x) \,(\xi (x) - 1)(N - \xi (x)) \ [f (\xi_{x+}) - f (\xi)] \vspace*{4pt} \\ & + &
                     \displaystyle \sum_x \ \sum_{y \sim x} \ \frac{1}{2M} \ \frac{b}{N (N - 1)} \ \ \xi (y) \,(\xi (y) - 1)(N - \xi (x)) \ [f (\xi_{x+}) - f (\xi)] \end{array}
\end{equation}
 where configuration $\xi_{x \pm}$ is obtained from $\xi$ by adding/removing one individual at $x$.
 The analog of this model derived from the basic contact process rather than the contact process with sexual reproduction has been
 studied in \cite{bertacchi_lanchier_zucca_2011} where it is proved that
\begin{itemize}
 \item the process survives when $a + b > 1$ and $N$ is sufficiently large \vspace*{2pt}
 \item the process dies out for all values of the parameter $N$ when $a + b \leq 1$.
\end{itemize}
 The analysis of the model \eqref{eq:micro-model}--\eqref{eq:macro-model} is more challenging.
 Constructing the system graphically from a collection of independent Poisson processes and using standard coupling arguments, we easily
 prove that the process is nondecreasing with respect to the two birth parameters $a$ and $b$, and is attractive.
 In particular, the survival probability
 $$ P \,(\eta_t \neq \0 \ \hbox{for all} \ t > 0) \ = \ P \,(\xi_t \neq \0 \ \hbox{for all} \ t > 0) $$
 where $\0$ stands for the all empty configuration is nondecreasing with respect to $a$ and $b$ but also with respect to the initial
 configuration.
 Note however that basic coupling arguments do not imply monotonicity with respect to $N$ or the dispersal range $M$.
 Attractiveness implies that the limiting distribution of the process starting from the all occupied configuration exists.
 It is usually called the upper invariant measure.
 Survival and extinction can be studied through this invariant measure looking at whether it is a nontrivial distribution or the point
 mass at configuration $\0$.
 However, to answer an important ecological question, namely, whether an alien species already established in one patch either
 successfully expands in space or is doomed to extinction, instead of looking at the upper invariant measure, we study the probability
 of long-term survival for the process starting with a single fully occupied patch defined as
\begin{equation}
\label{eq:survival-probability}
  P \,(\xi_t \neq \0 \ \hbox{for all} \ t > 0 \ | \ \xi_0 = \ind_x)
\end{equation}
 where $\ind_x$ is the configuration
 $$ \ind_x : \Z \longrightarrow \{0, 1, \ldots, N \} \quad \hbox{with} \quad \ind_x (x) = N \quad \hbox{and} \quad \ind_x (y) = 0 \ \hbox{for} \ y \neq x. $$
 To state but also motivate our results, we first observe that the time evolution of the fraction of individuals in a given patch is
 related to the differential equation
\begin{equation}
\label{eq:mean-field}
  u' (t) \ = \ Q (u) \quad \hbox{where} \quad Q (X) = a X^2 (1 - X) - X
\end{equation}
 with initial condition $u (0) \in (0, 1)$.
 This model exhibits two possible regimes:
\begin{enumerate}
 \item when $a < 4$, there is extinction: $u (t) \to 0$ as time $t \to \infty$. \vspace*{4pt}
 \item when $a > 4$, the polynomial $Q$ has two nontrivial roots given by
  $$ c_- \ := \ c_- (a) \ = \ \frac{1}{2} - \sqrt{\frac{1}{4} - \frac{1}{a}} \qquad \hbox{and} \qquad
     c_+ \ := \ c_+ (a) \ = \ \frac{1}{2} + \sqrt{\frac{1}{4} - \frac{1}{a}} $$
  and the system is bistable: $c_- (a)$ is unstable and
  $$ u (t) \to 0 \ \ \hbox{when} \ \ u (0) < c_- (a) \qquad \hbox{whereas} \qquad u (t) \to c_+ \ \ \hbox{when} \ \ u (0) > c_- (a). $$
\end{enumerate}
 Model \eqref{eq:mean-field} is probably the simplest model of population subject to a strong Allee effect.
 It shows that the value $a = 4$ is a critical value.
 At least when $N$ is large, which is the case of interest in ecology, our results for the stochastic spatial model reveal again
 different behaviors depending on whether the inner birth parameter $a$ and more generally the overall birth parameter $a + b$ is
 either larger or smaller than four.
 Our first theorem gives explicit conditions on the two birth parameters for the metapopulation to survive with probability close to
 one when $N$ is large and offspring can only be sent to the two patches adjacent to the parents' patch.
\begin{theor} --
\label{th:survival}
 Assume that $M = 1$ and that either
 $$ \hbox{(A1)} \ \ b > 8 \qquad \hbox{or} \qquad \hbox{(A2)} \ \ a > 4 \ \hbox{and} \ b > 2 a^3 \,c_-^4. $$
 Then, there exists $c > 0$ such that
 $$ P \,(\xi_t = \0 \ \hbox{for some} \ t > 0 \ | \ \xi_0 = \ind_x) \ \leq \ \exp (- c \sqrt N) \quad \hbox{for all $N$ sufficiently large}. $$
\end{theor}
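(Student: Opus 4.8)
The plan is to prove survival via a block construction (supercriticality comparison with oriented percolation), so the key is to establish a "growth event": starting from one fully occupied patch, with probability close to one the metapopulation fills up a neighboring patch within a bounded time, while spatially the dynamics dominates a 1-dependent oriented percolation process with density arbitrarily close to one. The technical heart is an analysis of a single patch, or a small block of patches, using the mesoscopic generator \eqref{eq:macro-model}. First I would study the dynamics of $\xi_t(x)/N$ for a single patch fed by a fully occupied neighbor: when $M=1$, a patch adjacent to a full patch receives offspring at mesoscopic rate $\approx (b/2)\,(N-1)(N-\xi(x))/(N-1) \approx (b/2)(N-\xi(x))$ and loses individuals at rate $\xi(x)$, plus the internal sexual term $a\,\xi(x)(\xi(x)-1)(N-\xi(x))/(N(N-1))$. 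Rescaling, $v_t := \xi_t(x)/N$ should be close to the solution of $v' = (b/2)(1-v) + a v^2(1-v) - v$, which — under (A1) $b>8$, or under (A2) once $v$ has climbed past $c_-$ via the internal term — has a stable fixed point bounded away from zero; moreover $v=1$ is reached to within $O(1/\sqrt N)$ quickly. The deviation of the stochastic process from this ODE on a bounded time interval is controlled by standard large-deviation / martingale estimates (Kurtz-type), giving an error probability $\exp(-c\sqrt N)$ — which is exactly where the $\sqrt N$ in the statement comes from, and why we only claim survival with high probability rather than probability one.

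The subtler ingredient is getting the process \emph{started} under hypothesis (A2): a single fully occupied patch is a full patch, so the neighbor it feeds begins empty, and the internal sexual term $a v^2(1-v)$ is negligible until $v$ exceeds roughly $c_-$. The dispersal term $(b/2)(1-v)$ alone pushes $v$ toward $1$ if $b>2$, but we need it to cross $c_-$ before the source patch itself can decay. The condition $b > 2a^3 c_-^4$ is precisely a quantitative version of "the immigration from the full neighbor drives $v$ above the Allee threshold $c_-$ fast enough." Concretely, I would show that starting from a full patch at $x$, within a short time $t_0$ the patch at $x\pm1$ reaches level $\geq (c_- + \delta)N$ with probability $1 - \exp(-c\sqrt N)$, and simultaneously $x$ stays above $(1-\delta)N$; after that the internal mechanism takes over and both patches climb to near $N$. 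Under (A1), $b>8$ makes even a single pair's worth of immigration sufficient — the linearization of $(b/2)(1-v) - v$ at $v=0$ has slope $b/2 - 1 > 3$, and in fact the quadratic $(b/2)(1-v) + av^2(1-v) - v$ has no small unstable root when $b > 8$ (this is the threshold for the full quadratic to be positive on $(0,1)$ even with $a=0$), so no threshold-crossing problem arises and the argument is cleaner.

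The block construction itself then proceeds as follows. Fix $\delta > 0$ small and $L, T$ large. Call a space-time box $(x,t)$ \emph{good} if, whenever the patches at $x$ (or at $x-1,x,x+1$, depending on which block lemma I use) are each at level $\geq (1-\delta)N$ at time $t$, then at time $t + T$ the patches at $x-1$ and $x+1$ are each at level $\geq (1-\delta)N$. The single-patch estimates above, together with attractiveness (monotonicity in the initial configuration, which lets us assume the worst case that surrounding patches are empty), show $P(\text{good}) \geq 1 - \exp(-c\sqrt N)$, and good events depending on disjoint space-time regions of the graphical representation are independent (or 1-dependent). By the standard comparison theorem for 1-dependent oriented percolation (as in Durrett's lecture notes, and used in \cite{bertacchi_lanchier_zucca_2011}), for $N$ large the set of "occupied" blocks dominates a supercritical oriented percolation cluster, which survives, and on survival the original process is nonempty for all time. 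Tracking the failure probability per block gives exactly the bound $\exp(-c\sqrt N)$ on the extinction probability. The main obstacle, as indicated, is the quantitative threshold-crossing estimate under (A2): one must carefully compare the early-stage stochastic dynamics (where the relevant quantities are $O(1/N)$ and the internal birth term is genuinely negligible) against the drift $(b/2)(1-v) - v$ and verify that $c_-$ is crossed before any meaningful decay of the source — this is where the precise constant $2a^3 c_-^4$ enters and where the martingale/optimal-stopping argument announced in the abstract does the real work.
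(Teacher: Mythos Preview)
Your overall architecture --- block construction, drift/ODE comparison, supermartingale bounds, coupling with oriented percolation --- is exactly what the paper does, and for (A2) your decomposition is essentially the paper's: a patch with $a>4$ is self-sustaining near the level $c_+ N$ (not $(1-\delta)N$ as you write; the internal dynamics stabilize at $c_+$, not at $1$), and the condition $b>2a^3 c_-^4$ is equivalent to $(b/2)\,c_+^2(1-c_-)>c_-$, i.e.\ the immigration drift from a source at level $c_+ N$ is enough to push the neighbor past the Allee threshold $c_- N$. So for (A2) you have the right picture, modulo the correct target level.

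For (A1), however, there is a real gap in your decomposition. You propose to analyze a single patch fed by a \emph{fully occupied} neighbor, and your block units are single patches at level $(1-\delta)N$. But (A1) does not assume $a>4$; with $a$ small (even $a=0$) a single patch has \emph{no} self-sustaining mechanism, so the source cannot stay near full --- it will drop well below $N$ on any order-one time scale, and your ODE $v'=(b/2)(1-v)+av^2(1-v)-v$ with a frozen full source is no longer the right comparison. Your reading of the constant $8$ (``no small unstable root of the quadratic'') is also off: the relevant computation is not for a single patch but for the \emph{pair}. The paper takes $(X_t,Y_t)=(\xi_t(x),\xi_t(y))$ jointly and shows that the drift of the \emph{sum} $X_t+Y_t$ is at least $N\cdot\frac14(b/8-1)$ in the region where $X+Y\approx N$ and neither coordinate has yet exceeded $N/2+O(\sqrt N)$; this is where $b>8$ enters, since $(b/2)(1/2)^2(1/2)\times 2=b/8$ is the mutual-feeding birth rate when both are near $N/2$. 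The block units are therefore \emph{pairs} of adjacent patches, both above $N/2$, which sustain each other through the outer birth term and then invade the next patch. Without this two-dimensional analysis your argument for (A1) does not close.
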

 Based on our simple analysis of \eqref{eq:mean-field}, we conjecture that $b > 4$ is a sufficient condition for the existence of a
 nontrivial invariant measure when $N$ is large, but we believe that this condition is not sufficient for the survival probability
 starting with a single occupied patch to be close to one.
 The reason is that only half of the offspring produced at the source patch are sent to a given adjacent patch so the value 8 can
 be understood literally as four times two adjacent patches.
 To comment on the second assumption, we note that the condition $a > 4$ alone is not sufficient even for the existence of a nontrivial
 invariant measure since, in the absence of migrations, each local population is doomed to extinction.
 However, using the expression of the root $c_-$ to compute explicitly the bound in condition (A2), we find for instance that the theorem
 holds for the pairs
 $$ (a > 4 \ \hbox{and} \ b > 8) \quad \hbox{and} \quad
    (a > 5 \ \hbox{and} \ b > 1.459) \quad \hbox{and} \quad
    (a > 6 \ \hbox{and} \ b > 0.862) $$
 indicating that the bound for $b$ decreases fast with respect to $a$.
 We now focus on extinction results.
 Conditions for extinction of the basic contact process can be easily found by comparing the process with a branching process.
 The same approach can be used to find conditions for extinction of the process with sexual reproduction and two levels of interactions.
 Indeed, observing that, in patches with exactly~$j$ individuals, offspring are produced at rate
 $$ (a + b) \,N \ \frac{j \,(j - 1)}{N \,(N - 1)} \ \leq \ (a + b) \,N \ \bigg(\frac{j}{N} \bigg)^2 \ \leq \ (a + b) \,j $$
 we deduce that the number of individuals in the metapopulation is dominated stochastically by a simple birth and death process with
 birth parameter $ a + b$.
 In particular, there is almost sure extinction whenever $a + b \leq 1$.
 This condition, however, is far from being optimal for the contact process with sexual reproduction.
 Indeed, the next theorem and the analysis of \eqref{eq:mean-field} suggest that extinction occurs whenever $a + b < 4$, a condition
 that we believe to be optimal.
\begin{theor} --
\label{th:extinction}
 Let $a + b < 4$.
 Then, for all $\ep > 0$, there exists $t$ such that
 $$ P \,(\eta_t (\x) = 1) \ \leq \ \ep \quad \hbox{for all $N$ sufficiently large}. $$
\end{theor}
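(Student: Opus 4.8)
The plan is to run the heuristic behind \eqref{eq:mean-field} not on the number of individuals in a patch, but on the number of \emph{ordered occupied pairs} in a patch. Two features make this work: such a pair disappears at rate $2$ (either of its two members may die), and the contribution of a birth to the pair count carries an extra factor, essentially the number of individuals in the receiving patch, which combines with the factor $N-\xi(x)$ (``the target site must be empty'') into the elementary inequality $\xi(x)(N-\xi(x))\le N^{2}/4$; it is this $1/4$ that turns $2$ into the threshold $4$.

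First, by attractiveness it suffices to treat the process started from the all-occupied configuration. Then, by translation invariance and by invariance of the dynamics under permutations of the individuals inside each patch, $u_{t}:=P(\eta_{t}(\x)=1)$ does not depend on $\x$, the quantity $\pi_{t}:=E[\xi_{t}(0)(\xi_{t}(0)-1)]$ does not depend on the patch, and $E[\xi_{t}(0)]=Nu_{t}$. Apply the generator $L_{+}$ of \eqref{eq:macro-model} to the bounded local function $\xi\mapsto\xi(0)(\xi(0)-1)$: a death at patch $0$ occurs at rate $\xi(0)$ and decreases this function by $2(\xi(0)-1)$, whereas a birth at patch $0$ increases it by $2\xi(0)$. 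Dynkin's formula then yields
\begin{align*}
 \frac{d}{dt}\,\pi_{t}\ =\ & -\,2\,\pi_{t}\ +\ \frac{2a}{N(N-1)}\,E\big[\xi_{t}(0)(\xi_{t}(0)-1)\,\xi_{t}(0)(N-\xi_{t}(0))\big] \\
  & +\ \frac{b}{M\,N(N-1)}\sum_{y\sim 0}E\big[\xi_{t}(y)(\xi_{t}(y)-1)\,\xi_{t}(0)(N-\xi_{t}(0))\big].
\end{align*}
Bounding the deterministic factor $\xi_{t}(0)(N-\xi_{t}(0))$ by $N^{2}/4$ in both expectations, and using translation invariance to replace each $E[\xi_{t}(y)(\xi_{t}(y)-1)]$ by $\pi_{t}$ in the sum over the $2M$ neighbors of $0$, we get $\frac{d}{dt}\pi_{t}\le\big(-2+\frac{(a+b)N}{2(N-1)}\big)\pi_{t}$. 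Since $a+b<4$, the coefficient is $\le-\delta$ for some fixed $\delta>0$ once $N$ is large, so Gr\"onwall's lemma gives $\pi_{t}\le\pi_{0}e^{-\delta t}\le N^{2}e^{-\delta t}$.

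It remains to return from pairs to a single site. By Jensen's inequality $\pi_{t}=E[\xi_{t}(0)^{2}]-E[\xi_{t}(0)]\ge(Nu_{t})^{2}-Nu_{t}$, so either $Nu_{t}<2$, or $Nu_{t}\ge2$, in which case $\pi_{t}\ge\tfrac12(Nu_{t})^{2}$ and hence $u_{t}\le\sqrt{2\pi_{t}}/N\le\sqrt2\,e^{-\delta t/2}$. In either case $u_{t}\le\max(2/N,\sqrt2\,e^{-\delta t/2})$. Given $\ep>0$, first choose $t$ so large that $\sqrt2\,e^{-\delta t/2}<\ep$, then choose $N$ large enough that also $2/N<\ep$ and the preceding estimates hold; this gives $P(\eta_{t}(\x)=1)=u_{t}<\ep$.

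The argument has no serious obstacle once one decides to track the pair count, and that choice is the real point. Tracking the first moment $E[\xi_{t}(0)]$ is not enough: its birth terms involve the two-point function $P(\eta_{t}(\y)=\eta_{t}(\z)=1)$ with $\pi(\y)=\pi(\z)$, for which one only has the bound $\le u_{t}$, and this yields a negative drift merely under $a+b<1$. Passing to the second factorial moment is precisely what lets the ``target empty'' factor and the birth increment combine into the single inequality $\xi(N-\xi)\le N^{2}/4$, tight exactly at density $1/2$ --- where \eqref{eq:mean-field} is most supercritical --- so that the conjecturally optimal value $4$ is not lost. The only other point to watch is the Jensen step at the end, which guarantees uniformity in $N$ despite $\pi_{0}$ being of order $N^{2}$, because $u_{t}$ is controlled by $\sqrt{\pi_{t}}/N$ rather than by $\pi_{t}$ itself.
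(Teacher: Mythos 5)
Your argument is correct, and it is genuinely different from the paper's. The paper proves Theorem \ref{th:extinction} by duality: it builds the dual of the microscopic process, shows in Lemma \ref{lem:extinction-collision} that collisions in the dual before a fixed dual time $t$ become unlikely as $N \to \infty$, and then computes the survival probability $\rho$ of a collision-free version of the dual by a first-step analysis leading to $(a+b)\,\rho^2(1-\rho) - \rho = 0$ (Lemma \ref{lem:extinction-dual}), which forces $\rho = 0$ precisely when $a + b < 4$; the conclusion follows from the relation $\eta_t(\w) = 0$ whenever the dual dies out. You instead close the problem at the level of the second factorial moment: applying \eqref{eq:macro-model} to $\xi(0)(\xi(0)-1)$, your death and birth increments ($-2(\xi(0)-1)$ at rate $\xi(0)$, $+2\xi(0)$ at the stated birth rates) and constants are right, the pointwise bound $\xi(0)(N-\xi(0)) \le N^2/4$ and translation invariance give $\frac{d}{dt}\pi_t \le \bigl(-2 + \frac{(a+b)N}{2(N-1)}\bigr)\pi_t$, and Gr\"onwall plus Jensen legitimately convert pair-density decay into site-density decay; the reduction to the all-occupied start by attractiveness and the within-patch exchangeability giving $E\,\xi_t(0) = N u_t$ are both sound. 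As for what each route buys: the duality computation is the paper's way of tying the threshold to the mean-field equation \eqref{eq:mean-field} (the fixed-point equation is essentially $Q(\rho) = 0$ with $a$ replaced by $a+b$), and the dual construction is of independent interest; your argument is more elementary and more quantitative, since the drift inequality holds uniformly in the configuration as soon as $N > 4/(4-(a+b))$, a condition independent of $t$, so for such $N$ you get $u_t \le \max(2/N, \sqrt 2\, e^{-\delta t/2})$ for all $t$ simultaneously --- an explicit exponential rate, validity for arbitrary initial configurations, and in the limit $t \to \infty$ a bound of order $1/N$ on the density of the upper invariant measure, which goes beyond what the paper's statement (and its proof, where $N$ must be taken large depending on $t$) delivers. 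The only step worth spelling out in a full write-up is the routine justification of $\frac{d}{dt} E f(\xi_t) = E L_+ f(\xi_t)$ for the cylinder function $f(\xi) = \xi(0)(\xi(0)-1)$, which holds here because the rates are bounded and of finite range.
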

 The proof is based on duality techniques, and more precisely the fact that the dual process of the microscopic model \eqref{eq:micro-model}
 dies out almost surely when $a + b < 4$.
 In particular, the theorem holds even for the process starting from the all occupied configuration.
 Note however that this result only suggests but does not imply extinction because it only holds for parameters $N$ that depend on time~$t$.
 This is due to the fact that the survival probability of the somewhat complicated dual process can only be computed explicitly in the absence
 of collisions in the dual process which, in turn, can only be obtained in the limit as $N \to \infty$.
 Our last theorem also looks at the extinction regime and is probably the most interesting one from an ecological perspective.
 It shows that, in contrast with our first result that identifies parameter regions in which survival occurs with probability arbitrarily
 close to one, regardless of the choice of the birth parameters, extinction occurs with probability close to one when the dispersal range
 $M$ is sufficiently large.
\begin{theor} --
\label{th:range}
 For all $M$ large
\begin{enumerate}
 \item and all $a < 4$, we have \vspace*{4pt}
 \item[] \hspace*{20pt} $P \,(\xi_t \neq \0 \ \hbox{for all} \ t > 0 \ | \ \xi_0 = \ind_x) \ \leq \ M^{-1/3} \,(1/2 + b \,N (1 - a/4)^{-1})$. \vspace*{4pt}
 \item and $a = 4$, we have \vspace*{4pt}
 \item[] \hspace*{20pt} $P \,(\xi_t \neq \0 \ \hbox{for all} \ t > 0 \ | \ \xi_0 = \ind_x) \ \leq \ M^{-1/3} \,(1/2 + (b/2)(N + 2)^2)$. \vspace*{4pt}
 \item and all $a > 4$, we have \vspace*{4pt}
 \item[] \hspace*{20pt} $P \,(\xi_t \neq \0 \ \hbox{for all} \ t > 0 \ | \ \xi_0 = \ind_x) \ \leq \ M^{-1/3} \,(1/2 + b \,(a/4 - 1)^{-2} \,(a/4)^{N + 2})$.
\end{enumerate}
\end{theor}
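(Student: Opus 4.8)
The plan is to reduce survival of the whole metapopulation to a single event concerning only the source patch $x$, and then to control that event with a first–moment estimate for a birth–and–death chain; the structural feature that makes this work is that, under sexual reproduction, a patch containing only one individual cannot reproduce at all. Concretely, as long as no patch other than $x$ ever holds two individuals, the only source of offspring anywhere is $x$ itself and no patch can send offspring into $x$, so $\xi_t(x)$ evolves autonomously as the birth–and–death chain on $\{0,1,\dots,N\}$ with $\delta_j=j$ and $\beta_j=\frac{a}{N(N-1)}\,j(j-1)(N-j)$, started from $N$. This finite chain is absorbed at $0$ in finite time almost surely, and once it is absorbed all occupied sites sit alone in their patch and merely die at rate one, so the process reaches $\0$. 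Hence, writing $E$ for the event that some patch $y\neq x$ ever contains two individuals,
$$ P\,(\xi_t\neq\0\ \hbox{for all}\ t>0\mid\xi_0=\ind_x)\ \leq\ P\,(E). $$

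Next I would describe $E$ through the emigration process of $x$. On $E$, the first patch $y\neq x$ to reach two individuals is necessarily a neighbour of $x$ and got there because an emigrant from $x$ landed on $y$ while $y$ already held one individual; in particular $y$ received at least two emigrants from $x$, and up to that time the emigration rate from $x$ onto any fixed neighbour is at most $\frac{b}{2M(N-1)}\,\xi_s(x)(\xi_s(x)-1)$ (worst case: empty target). A thinning coupling with the dominating process in which this rate is attained and in which $\xi(x)$ is run to its absorption time $T_0$ gives $P(E)\leq P(\hbox{some neighbour of }x\hbox{ receives }\geq 2\hbox{ emigrants})$, where the total number $K$ of emigrants is, conditionally on the path of $\xi(x)$, Poisson with mean $\Lambda=\int_0^{T_0}\frac{b}{N-1}\,\xi_s(x)(\xi_s(x)-1)\,ds$, and the $K$ targets are i.i.d. uniform on the $2M$ neighbours of $x$. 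Introducing the scale $\theta:=\lceil M^{1/3}\rceil$ and splitting according to whether $K\leq\theta$,
$$ P\,(E)\ \leq\ P\,(K>\theta)\ +\ P\,(\hbox{some neighbour gets }\geq 2\mid K\leq\theta)\ \leq\ \frac{E[\Lambda]}{\theta}\ +\ \frac{1}{2M}\binom{\theta}{2}\ \leq\ M^{-1/3}\,E[\Lambda]\ +\ \tfrac12\,M^{-1/3} $$
for $M$ large, using Markov's inequality and $E[K]=E[\Lambda]$ for the first term, and the union bound over the $\binom{\theta}{2}$ pairs of emigrants (each landing in a common patch with probability $1/(2M)$) for the second.

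It then remains to bound $E[\Lambda]=\frac{b}{N-1}\sum_{j=2}^{N}j(j-1)\,G(j)$, where $G(j)$ is the expected time the chain spends at state $j$ before absorption. The classical Green's–function formula for a birth–and–death chain absorbed at $0$ yields $G(j)=\frac1j\sum_{m=1}^{j-1}\prod_{l=m+1}^{j-1}\frac{\beta_l}{\delta_l}$ (empty products equal to $1$), with $\frac{\beta_l}{\delta_l}=\frac{a(l-1)(N-l)}{N(N-1)}\leq\frac a4$; so $G(j)$ is $\frac1j$ times a finite geometric–type sum of ratio at most $a/4$. Summing this in the three regimes — ratio $<1$ when $a<4$, ratio $\leq 1-1/N$ when $a=4$, only finitely many terms when $a>4$ — and then evaluating $\sum_{j\leq N}(j-1)$ (respectively estimating $\sum_{j\leq N}(j-1)(a/4)^{j}$ by its last term) produces $E[\Lambda]\leq bN(1-a/4)^{-1}$, $E[\Lambda]\leq\frac b2(N+2)^2$, and $E[\Lambda]\leq b(a/4-1)^{-2}(a/4)^{N+2}$ respectively, which inserted into the previous display is exactly the asserted bound in each case.

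The hard part will be the reduction: recognising that the pair–birth mechanism collapses the global survival question to ``does $x$ ever put two individuals into a neighbouring patch'', and carefully justifying that $\xi(x)$ is autonomous — and that the configuration can only decrease to $\0$ — up until the event $E$. Everything afterwards is elementary; in particular, it is precisely because only the first moment $E[\Lambda]=E[K]$ is needed that the blunt choice $\theta=M^{1/3}$ is convenient and suffices, at the harmless cost of the exponent $1/3$ rather than a smaller power of $M$.
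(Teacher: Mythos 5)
Your proposal is correct and follows essentially the same route as the paper: reduce survival to the event that some neighbor of $x$ receives two emigrants (the paper's ``collision''), split on whether the number of emigrants exceeds $M^{1/3}$, bound the collision probability given few emigrants by a birthday-type estimate of order $\tfrac12 M^{-1/3}$, and control the tail by Markov's inequality together with the occupation times of the isolated patch chain, whose geometric structure with ratio $a/4$ yields the three regimes. The only cosmetic difference is that you bound the Green's function of the actual chain directly via $\beta_l/\delta_l \leq a/4$, whereas the paper first couples with the dominating linear birth--death chain with rates $(a/4)j$ and $j$ and solves the recursion for the expected number of visits.
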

 There are three different estimates because the survival probability is related to the time to extinction of a patch in isolation,
 which scales differently depending on whether the inner birth rate is subcritical, critical or supercritical.
 In either case, the theorem shows that the survival probability decreases to zero as the dispersal range increases indicating that,
 though dispersal is necessary for expansion in space, long range dispersal promotes extinction of metapopulations subject to a strong
 Allee effect caused by sexual reproduction.
 In particular, the effects of dispersal are opposite for the process with and without sexual reproduction since, in the presence
 of long range dispersal, the basic contact process approaches a branching process with critical values for
 survival significantly smaller than that of the process with nearest neighbor interactions.


\section{Proof of Theorem \ref{th:survival} under assumption A1}
\label{sec:outer-survival}

\indent This section and the next section are devoted to the proof of Theorem \ref{th:survival}.
 Key ingredients to establish the theorem are the identification of submartingales and supermartingales, the application of the optimal
 stopping theorem to these processes, as well as large deviation estimates for various random variables.
 This section proves survival under the assumption: $b > 8$.
 To begin with, we give an overview of the main steps of the proof. Let
 $$ X_t \ := \ \xi_t (x) \quad \hbox{and} \quad Y_t \ := \ \xi_t (y) $$
 where $x, y \in \Z$ are adjacent patches: $|x - y| = 1$.
 Also, for all $\rho > 0$, we define
 $$ \Omega_{\rho} \ := \ \{(i, j) \in \{0, 1, \ldots, N \}^2 : i > N/2 + \rho \sqrt N \ \hbox{and} \ j > N/2 + \rho \sqrt N \}. $$
 These are considered in the proof as subsets of the state space of $(X_t, Y_t)$.
 The proof is divided into four steps which we now briefly describe. \vspace*{4pt} \\
\noindent {\bf Step 1} -- Starting with patch $x$ fully occupied and patch $y$ empty, the overall population in both patches increases
 while both population sizes to converge to each other resulting in the number of individuals in each patch being quickly above a certain threshold:
 $(X, Y) \in \Omega_2$ quickly. \vspace*{4pt} \\
\noindent {\bf Step 2} -- Once both local populations are above this threshold, they stay at least around this threshold for a long time:
 $(X, Y) \in \Omega_0$ for a long time after hitting $\Omega_2 \subset \Omega_0$. \vspace*{4pt} \\
\noindent {\bf Step 3} -- From the previous two steps, we deduce that, with probability arbitrarily close to one when the parameter $N$ is large,
 both patches are above some threshold at some deterministic time, an event that we shall call successful first invasion. \vspace*{4pt} \\
\noindent {\bf Step 4} -- The argument to prove that subsequent invasions also occur with probability close to one is slightly different
 because the probability that both patches $x$ and $y$ are fully occupied at some deterministic time is close to zero.
 To deal with subsequent invasions, we adapt step 3 to deduce that, if the populations at patches $x$ and $y$ lie above some
 threshold then the same holds in the two nearby patches after some deterministic time. \vspace*{4pt} \\
 The first part of the theorem is then obtained by combining the last two steps with a block construction in order to compare the process
 properly rescaled in space and time with oriented percolation.
 To prove the first step, we define the sets
 $$ \begin{array}{rcl}
    \Omega_- & := & \{(i, j) \in \{0, 1, \ldots, N \}^2 : i + j > N - 4 \sqrt N \ \hbox{and} \ j < N/2 + 2 \sqrt N \ \hbox{and} \ i > j \} \vspace*{2pt} \\
    \Omega_+ & := & \{(i, j) \in \{0, 1, \ldots, N \}^2 : i + j > N - 4 \sqrt N \ \hbox{and} \ i < N/2 + 2 \sqrt N \ \hbox{and} \ i < j \} \end{array} $$
 which are represented in Figure \ref{fig:sketch}.
 Step 1 is proved in Lemmas~\ref{lem:outer-sum}--\ref{lem:outer-fast}.

\begin{figure}[t]
\centering
\scalebox{0.50}{\input{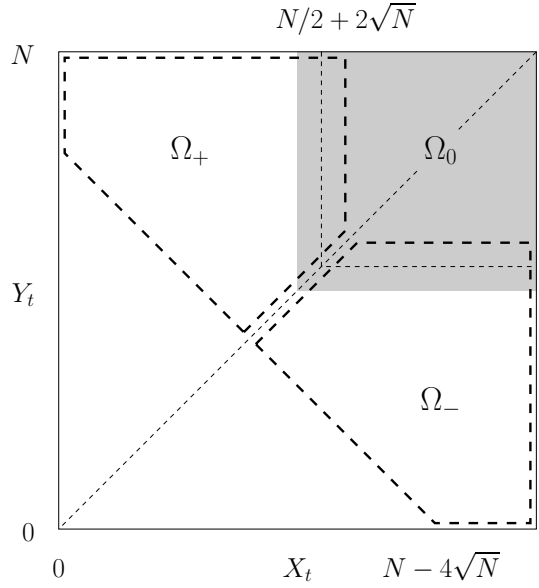}}
\caption{\upshape{Picture related to the proof of Theorem \ref{th:survival}.}}
\label{fig:sketch}
\end{figure}


\begin{lemma} --
\label{lem:outer-sum}
 Let $b > 8$.
 For all $N$ large,
 $$ \begin{array}{l} \lim_{\,h \to 0} \,h^{-1} \,E \,((X_{t + h} + Y_{t + h}) - (X_t + Y_t) \ | \ (X_t, Y_t) \in \Omega_- \cup \,\Omega_+) \ \geq \ N (1/4)(b/8 - 1). \end{array} $$
\end{lemma}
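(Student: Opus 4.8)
The plan is to compute the infinitesimal drift of the total population $S_t := X_t + Y_t$ in the two patches directly from the mesoscopic generator~\eqref{eq:macro-model}, throw away all the nonnegative contributions, and then minimize what is left over $\Omega_- \cup \Omega_+$.

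First I would list the transitions of $\xi$ that change $S$. Since $M = 1$ and $|x - y| = 1$, patch $x$ has $y$ as one of its two neighbors plus one other neighbor $x' \notin \{y\}$, and likewise $y$ has one neighbor $y' \notin \{x\}$. The relevant transitions are: the two deaths, contributing $-X - Y$ to the drift; the two inner births, contributing $\tfrac{a}{N(N-1)}\,[X(X-1)(N-X) + Y(Y-1)(N-Y)]$; the outer births at $x$ from $y$ and at $y$ from $x$, contributing $\tfrac{b}{2N(N-1)}\,[Y(Y-1)(N-X) + X(X-1)(N-Y)]$; and the outer births at $x$ from $x'$ and at $y$ from $y'$, which are nonnegative. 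Discarding the inner births and the last two terms gives, whenever $(\xi(x),\xi(y)) = (X,Y)$,
$$ \lim_{h \to 0} h^{-1} E(S_{t+h} - S_t \mid \xi_t = \xi) \ \geq \ -X - Y + \frac{b}{2 N (N - 1)} \,[\, Y(Y-1)(N-X) + X(X-1)(N-Y) \,]; $$
since the right-hand side depends on $\xi$ only through $(X,Y)$, this bound is inherited after conditioning on $\{(X_t, Y_t) \in \Omega_- \cup \Omega_+\}$.

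Next I would pass to the continuum: with $X = Nu$, $Y = Nv$, and using $|X(X-1)/(N(N-1)) - u^2| \leq 1/(4(N-1))$, the right-hand side equals $N\psi(u,v) + O(1)$ where
$$ \psi(u,v) \ := \ -(u+v) + \frac{b}{2} \,[\, v^2(1-u) + u^2(1-v) \,]. $$
By the symmetry $X \leftrightarrow Y$ it suffices to bound $\psi$ below on the continuum analogue of $\Omega_-$, namely $R := \{(u,v) : u + v \geq 1, \ v \leq 1/2, \ u \geq v\}$; the $\sqrt N$ fudge factors in $\Omega_-$ only displace this region by $O(1/\sqrt N)$, which is harmless since $\psi$ is Lipschitz. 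Setting $s = u + v$ and $p = uv$, one has the identity $v^2(1-u) + u^2(1-v) = s^2 - p(2+s)$, so for fixed $s$ the function $\psi$ decreases in $p$ and is minimized when $p$ is largest; under the constraints this happens at $v = 1/2$, and substituting back leaves an elementary one-variable minimization in $s \in [1, 3/2]$ whose minimizer (for $b > 8$) sits at the endpoint $s = 1$, i.e.\ near $(u,v) = (1/2,1/2)$, with value $\psi_{\min} = b/8 - 1$.

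Combining the two displays, the conditional drift of $S_t$ is at least $N(b/8 - 1) - O(\sqrt N)$. Since $b > 8$ forces $b/8 - 1 > 0$, for all $N$ sufficiently large this exceeds $N(1/4)(b/8 - 1)$, which is the assertion — the deliberately loose factor $1/4$ is precisely what absorbs the lower-order errors. The only mildly delicate points are the bookkeeping of those errors (the $X(X-1)$-versus-$X^2$ discrepancy and the $\sqrt N$-thick boundary layer of $\Omega_-$), which are genuinely $o(N)$, and making sure the minimum of $\psi$ is taken over the right region; the optimization itself is routine once the substitution $s = u+v$, $p = uv$ is in hand.
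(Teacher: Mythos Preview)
Your proof is correct and follows essentially the same strategy as the paper's: both discard the nonnegative inner-birth and external-neighbor contributions, replace $X(X-1)/(N(N-1))$ by $(X/N)^2$ at the cost of an $O(1)$ error, and then minimize the remaining expression over $\Omega_-$ (with $\Omega_+$ handled by symmetry). The only cosmetic difference is in the minimization step: the paper parametrizes $\Omega_-$ by shifted coordinates $i = N/2 - 2\sqrt N + u$, $j = N/2 + 2\sqrt N - v$ and expands the cubic directly to exhibit a nonnegative term plus $(b/8-1)(N+u-v)$, while you rescale to the unit square and use the symmetric substitution $s=u+v$, $p=uv$ to locate the minimum of $\psi$ at $(1/2,1/2)$; both routes give the same leading term $N(b/8-1)$ up to $O(\sqrt N)$.
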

\begin{proof}
 Each $(i, j) \in \Omega_-$ can be written as
 $$ i \ := \ N/2 - 2 \sqrt N + u \quad \hbox{and} \quad j \ := \ N/2 + 2 \sqrt N - v $$
 where $u, v \in (0, N/2 + 2 \sqrt N)$.
 It follows that
 $$ \begin{array}{l}
     \lim_{\,h \to 0} \,h^{-1} \,E \,((X_{t + h} + Y_{t + h}) - (X_t + Y_t) \ | \ (X_t, Y_t) = (i, j) \in \Omega_-) \vspace*{4pt} \\ \hspace*{25pt}
        = \ N^{-2} \,(b/2)(i^2 \,(N - j) + j^2 \,(N - i)) - (i + j) + O (1) \vspace*{4pt} \\ \hspace*{25pt}
        = \ (b/8)(2 N u^2 + 2 N v^2 + N^2 u - N^2 v + 8 N uv + 4 u^2v - 4 uv^2 + N^3) \vspace*{4pt} \\ \hspace*{50pt} - \ (N + u - v) + O (\sqrt N) \vspace*{4pt} \\ \hspace*{25pt}
        = \ (b/8)(2 N (u^2 + v^2) + N^2 (u - v) + 4 uv \,(2N + u - v) + N^3) \vspace*{4pt} \\ \hspace*{50pt} - \ (N + u - v) + O (\sqrt N) \vspace*{4pt} \\ \hspace*{25pt}
        = \ (b/8)(2 N (u^2 + v^2) + 4 uv \,(2 N + u - v)) \vspace*{4pt} \\ \hspace*{50pt} + \ (b/8 - 1)(N + u - v) + O (\sqrt N) \ \geq \ N (1/4)(b/8 - 1) \end{array} $$
 for all $N$ large.
 By symmetry, we also have
 $$ \begin{array}{l}
     \lim_{\,h \to 0} \,h^{-1} \,E \,((X_{t + h} + Y_{t + h}) - (X_t + Y_t) \ | \ (X_t, Y_t) \in \Omega_+) \ \geq \ N (1/4)(b/8 - 1) \end{array} $$
 which completes the proof.
\end{proof}
\begin{lemma} --
\label{lem:outer-supermartingale}
 Let $b > 8$.
 There exists $c_1 > 0$ such that
 $$ \begin{array}{l} \lim_{\,h \to 0} \,h^{-1} \,E \,(Z_{t + h} - Z_t \ | \ (X_t, Y_t) \in \Omega_- \cup \,\Omega_+) \ \leq \ 0 \quad \hbox{for all $N$ large} \end{array} $$
 where $Z_t := \exp \,(- c_1 (X_t + Y_t))$.
\end{lemma}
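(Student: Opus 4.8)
The plan is to apply the mesoscopic generator $L_+$ to the function $g (\xi) := \exp (- c_1 (\xi (x) + \xi (y)))$, so that $Z_t = g (\xi_t)$, and to choose $c_1 > 0$ so that $(L_+ g)(\xi) \leq 0$ whenever $(\xi (x), \xi (y)) \in \Omega_- \cup \,\Omega_+$, for all $N$ large. Write $S_t := X_t + Y_t$. The only transitions that change $S_t$ are deaths in patches $x$ and $y$, which decrease $S_t$ by one at total rate $\beta_- := \xi (x) + \xi (y)$, and births landing in $x$ or $y$, which increase $S_t$ by one at some total rate $\beta_+ = \beta_+ (\xi)$. A direct computation of $L_+ g$ therefore gives
$$ (L_+ g)(\xi) \ = \ g (\xi) \,\big[\beta_+ \,(e^{- c_1} - 1) + \beta_- \,(e^{c_1} - 1)\big], $$
so, since $1 - e^{- c_1} = e^{- c_1} (e^{c_1} - 1)$ and $e^{c_1} - 1 > 0$, the inequality $(L_+ g)(\xi) \leq 0$ is equivalent to the clean condition $\beta_+ \geq \beta_- \,e^{c_1}$.

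To verify this, I would note that the drift of $S_t$ is exactly $\beta_+ - \beta_-$, which is the quantity estimated in Lemma~\ref{lem:outer-sum}; its proof in fact yields the bound pointwise, and, since any configuration contributes only nonnegative extra birth rates compared with the case $a = 0$ with empty neighbors used there, we get $\beta_+ - \beta_- = \beta_+ - (i + j) \geq N (1/4)(b/8 - 1) =: \delta N$ with $\delta > 0$ (as $b > 8$) for every $(\xi (x), \xi (y)) = (i, j) \in \Omega_- \cup \,\Omega_+$ and $N$ large. Because $\beta_- = i + j \leq 2 N$ on $\Omega_- \cup \,\Omega_+$, this gives $\beta_+ \geq \beta_- + \delta N \geq \beta_- \,(1 + \delta / 2)$, so it suffices to pick $c_1 > 0$ small enough that $e^{c_1} \leq 1 + \delta / 2$ — a choice depending only on $b$ and not on $N$ — to conclude $\beta_+ \geq \beta_- \,e^{c_1}$, hence $(L_+ g)(\xi) \leq 0$, throughout $\Omega_- \cup \,\Omega_+$. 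Averaging over the conditional law of $(X_t, Y_t)$ given $(X_t, Y_t) \in \Omega_- \cup \,\Omega_+$ then produces the statement of the lemma.

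The whole argument rests on Lemma~\ref{lem:outer-sum}, so there is no new hard estimate here; the one genuinely delicate point is the tension between the convexity of $z \mapsto e^{- c_1 z}$, which amplifies the downward death jumps relative to the upward birth jumps, and the positive drift of $S_t$. What rescues the supermartingale property is that Lemma~\ref{lem:outer-sum} delivers a drift that is a fixed fraction of $\beta_-$, uniformly on $\Omega_- \cup \,\Omega_+$, which is precisely what lets $c_1$ be taken strictly positive and independent of $N$. The $O (\sqrt N)$ error terms from Lemma~\ref{lem:outer-sum} are harmless because they are dominated by $\delta N$ once $N$ is large, and that is the sole role of the hypothesis that $N$ be large.
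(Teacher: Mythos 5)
Your proposal is correct, and it reaches the same exponential supermartingale via the same key input (the drift bound of Lemma~\ref{lem:outer-sum}), but the finishing argument is genuinely different from the paper's. The paper defines $\phi_{ij}(c)$ as the generator applied to $e^{-c(X+Y)}$, observes $\phi_{ij}(0)=0$ and, by Lemma~\ref{lem:outer-sum}, $\phi_{ij}'(0)\le -c_2<0$, and concludes that some $c_1>0$ makes $\phi_{ij}(c_1)\le 0$ on $\Omega_-\cup\Omega_+$ — a soft first-order (derivative at zero) argument that leaves the uniformity of $c_1$ in $(i,j)$ and in $N$ implicit. You instead exploit the fact that $S_t=X_t+Y_t$ jumps only by $\pm 1$, compute the generator acting on $e^{-c_1 S}$ exactly, and reduce the supermartingale property to the clean pointwise inequality $\beta_+\ge \beta_-\,e^{c_1}$; you then verify it from the pointwise drift bound $\beta_+-\beta_-\ge \delta N$ of Lemma~\ref{lem:outer-sum} (correctly noting that the $a$-births and births from outside patches only increase $\beta_+$) together with the elementary observation $\beta_-=i+j\le 2N$, which turns the drift into a fixed fraction of the death rate and yields the explicit, $N$-independent choice $c_1=\log(1+\delta/2)$. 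What your route buys is precisely the point the paper glosses over: an explicit constant and a transparent argument for why the same $c_1$ works for every $(i,j)\in\Omega_-\cup\Omega_+$ and all large $N$ (in the paper's version one would in principle need uniform control of $\phi_{ij}''$ to justify this); what it costs is nothing beyond the mild bookkeeping that $S_t$ changes only through unit births and deaths at the two patches, which is immediate from the generator \eqref{eq:macro-model}. Your final averaging step to pass from the pointwise bound to the conditional expectation in the statement is also fine.
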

\begin{proof}
 To begin with, we define the functions
 $$ \begin{array}{l}
     \phi_{ij} (c) \ := \ \lim_{\,h \to 0} \,h^{-1} \,E \,(\exp \,(- c \,(X_{t + h} + Y_{t + h})) \vspace*{4pt} \\ \hspace*{80pt} - \exp \,(- c \,(X_t + Y_t)) \ | \ (X_t, Y_t) = (i, j)) \end{array} $$
 and notice that
 $$ \begin{array}{l}
     \phi_{ij}' (c) \ := \ \lim_{\,h \to 0} \,h^{-1} \,E \,(- (X_{t + h} + Y_{t + h}) \,\exp \,(- c \,(X_{t + h} + Y_{t + h})) \vspace*{4pt} \\ \hspace*{100pt} + \
                            (X_t + Y_t) \,\exp \,(- c \,(X_t + Y_t)) \ | \ (X_t, Y_t) = (i, j)). \end{array} $$
 This, together with Lemma~\ref{lem:outer-sum}, implies that
 $$ \phi_{ij}' (0) \ \leq \ - c_2 \ := \ - N (1/4)(b/8 - 1) \quad \hbox{for all} \quad (i, j) \in \Omega_- \cup \,\Omega_+. $$
 Using also that $\phi_{ij} (0) = 0$, we get the existence of $c_1 > 0$ such that
 $$ \begin{array}{l}
     \phi_{ij} (c_1) \ = \ \lim_{\,h \to 0} \,h^{-1} \,E \,(\exp \,(- c_1 (X_{t + h} + Y_{t + h})) \vspace*{4pt} \\ \hspace*{80pt} - \
                           \exp \,(- c_1 (X_t + Y_t)) \ | \ (X_t, Y_t) = (i, j)) \ \leq \ 0 \end{array} $$
 for all $(i, j) \in \Omega_- \cup \,\Omega_+$.
 This completes the proof.
\end{proof} \\ \\
 Relying on Lemmas~\ref{lem:outer-sum}--\ref{lem:outer-supermartingale}, we can now complete the proof of the first step.
 To state our result, it is convenient to introduce the hitting time
 $$ \tau_{\rho}^+ \ := \ \inf \,\{t : (X_t, Y_t) \in \Omega_{\rho} \}. $$
 In addition to Lemmas~\ref{lem:outer-sum}--\ref{lem:outer-supermartingale}, the proof also relies on an application of the
 optimal stopping theorem for supermartingales and large deviation estimates for the Poisson distribution.
\begin{lemma} --
\label{lem:outer-fast}
 Let $b > 8$.
 There exists $c_3 > 0$ such that
 $$ P \,(\tau_2^+ > N \ | \ X_0 + Y_0 \geq N) \ \leq \ \exp (- c_3 \sqrt N) \quad \hbox{for all $N$ large}. $$
\end{lemma}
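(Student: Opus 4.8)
The plan is to control the sum process $S_t := X_t + Y_t$ using the supermartingale built in Lemma~\ref{lem:outer-supermartingale}, and to argue that $S_t$ crosses the level $N - 4\sqrt N$ quickly, after which the hitting time $\tau_2^+$ of $\Omega_2$ follows with at most a short additional delay. First I would observe that starting from $X_0 + Y_0 \geq N$, the configuration lies in $\Omega_- \cup \Omega_+$ (or already in the target region), so Lemma~\ref{lem:outer-sum} gives a strictly positive drift of order $N(1/4)(b/8-1)$ for $S_t$ as long as we remain in $\Omega_- \cup \Omega_+$. I would then run the process until the stopping time $\sigma := \inf\{t : (X_t,Y_t) \notin \Omega_- \cup \Omega_+\}$, i.e., until $S_t$ exceeds $N - 4\sqrt N$ or until one of the coordinates crosses the diagonal strip condition. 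Applying the optimal stopping theorem to the bounded supermartingale $Z_{t \wedge \sigma} = \exp(-c_1 S_{t\wedge\sigma})$ from Lemma~\ref{lem:outer-supermartingale} yields $E[\exp(-c_1 S_{\sigma \wedge t})] \leq \exp(-c_1 S_0) \leq \exp(-c_1 N)$, and combining this with the positive drift (either via a standard submartingale/optional-stopping estimate on $S_t + c_2 t$, or by a direct Markov-inequality argument on $Z$) gives that $\sigma \leq N$ except on an event of probability at most $\exp(-c\sqrt N)$ — the exponential smallness coming from the fact that the drift is $\Theta(N)$ per unit time while the width of the window $S_t$ must traverse is $O(\sqrt N)$.

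Next I would dispose of the "wrong exit" possibility: when $(X_t,Y_t)$ leaves $\Omega_- \cup \Omega_+$ it could in principle exit through a boundary other than $\{S_t > N - 4\sqrt N\}$. Here I would use the geometry in Figure~\ref{fig:sketch}: the drift computation in Lemma~\ref{lem:outer-sum} also shows (by the symmetry between $u$ and $v$, and the sign of the $N^2(u-v)$ term) that the difference $X_t - Y_t$ is pushed toward zero, so the process does not escape through the diagonal sides; more carefully, one shows the only boundary with an outward drift is the far one $\{i+j > N - 4\sqrt N\}$, and a large deviation estimate for the relevant Poisson-type increments bounds the probability of a large fluctuation in $X_t - Y_t$ before $S_t$ climbs by $\exp(-c\sqrt N)$. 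Once $S_t > N - 4\sqrt N$ I would note that $\min(X_t, Y_t) > N/2 + 2\sqrt N$ fails only if the coordinates are very unbalanced, which again is a large-deviation event of probability $\exp(-c\sqrt N)$; balancing this against the fact that the birth rates in each patch of size $> N - 4\sqrt N$ are large (order $N$) means each patch is pulled above $N/2 + 2\sqrt N$, i.e., into $\Omega_2$, within an additional $O(1)$ time except on an exponentially small event. Taking a union bound over the (at most three) bad events and adjusting $c_3$ finishes the argument.

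The main obstacle is the second paragraph: keeping careful track of the boundary through which $(X_t, Y_t)$ exits the wedge $\Omega_- \cup \Omega_+$ and showing it is overwhelmingly likely to be the "correct" far boundary. The positive drift of the sum alone does not preclude the difference $|X_t - Y_t|$ from drifting or fluctuating out of the strip $\{j < N/2 + 2\sqrt N\}$ (resp. $\{i < N/2 + 2\sqrt N\}$) before $S_t$ rises; one must combine the inward drift on the difference with a Poissonian large-deviation bound over a time window of length $O(1)$, during which $S_t$ increases by $\Theta(N)$ but could in a rare event fail to. I would handle this by freezing the analysis on the event that no jump of size exceeding, say, $\sqrt N/\log N$ in the difference occurs — which has probability $1 - \exp(-c\sqrt N)$ by the Poisson tail — and on that event the deterministic drift estimates of Lemma~\ref{lem:outer-sum} dominate. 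The remaining steps (optional stopping, Markov's inequality on $\exp(-c_1 S_t)$, Poisson tail bounds) are routine.
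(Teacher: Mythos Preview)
Your proposal has the right tools (the supermartingale of Lemma~\ref{lem:outer-supermartingale}, optional stopping, Poisson large deviations), but it is built on a misreading of the geometry that makes the whole second paragraph unnecessary and introduces some confused statements in the first.

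First, note that $S_0 = X_0 + Y_0 \geq N > N - 4\sqrt N$, so the sum \emph{already} exceeds $N - 4\sqrt N$ at time zero; there is nothing for $S_t$ to ``climb''. More importantly, look at the definitions of $\Omega_-$, $\Omega_+$ and $\Omega_2$: the region $\{(i,j) : i + j > N - 4\sqrt N\} \setminus \Omega_2$ is contained in $\Omega_- \cup \Omega_+$ (up to the diagonal $i = j$, which just passes from one wedge to the other). Hence the only way for the pair $(X_t, Y_t)$ to leave $\Omega_- \cup \Omega_+$ without entering $\Omega_2$ is for the sum $S_t$ to drop below $N - 4\sqrt N$. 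There is no ``wrong exit'' through a side boundary to worry about; the boundary $\{j = N/2 + 2\sqrt N\}$ of $\Omega_-$ (and symmetrically for $\Omega_+$) already opens into $\Omega_2$. So your entire second paragraph, and the control of $X_t - Y_t$, is superfluous for this lemma. (The inward drift of the difference is genuinely used in the paper, but later, in the proof of Lemma~\ref{lem:outer-stay}.)

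The paper's argument exploits exactly this observation. Set $\tau := \inf\{t : S_t < N - 4\sqrt N\}$ and $\sigma := \min(\tau, \tau_2^+)$. Optional stopping applied to $Z_t = \exp(-c_1 S_t)$ on $[0,\sigma]$ gives $E Z_\sigma \leq \exp(-c_1 N)$, and comparing with the values of $Z$ at the two exits yields
\[
P(\tau < \tau_2^+ \mid S_0 \geq N) \ \leq \ \frac{\exp(-c_1 N)}{\exp(-c_1(N - 4\sqrt N))} \ = \ \exp(-4c_1\sqrt N).
\]
On the complementary event $\{\tau_2^+ < \tau\}$ the process stays in $\Omega_- \cup \Omega_+$ until $\tau_2^+$, so the drift bound of Lemma~\ref{lem:outer-sum} applies throughout, and a standard large deviation estimate for the Poisson clocks gives $P(\tau_2^+ > N \mid \tau_2^+ < \tau) \leq \exp(-c_4 N)$. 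Summing the two bounds gives the lemma. Your proposal can be repaired along these lines, but as written the ``wrong exit'' analysis is both unneeded and based on an inverted picture of which boundary is dangerous.
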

\begin{proof}
 First, we introduce the stopping times
 $$ \tau \ := \ \inf \,\{t : X_t + Y_t < N - 4 \sqrt N \} \ = \ \inf \,\{t : Z_t > \exp (- c_1 \,(N - 4 \sqrt N)) \} $$
 as well as $\sigma := \min (\tau, \tau_2^+)$. Then,
\begin{equation}
\label{eq:outer-fast-1}
  \begin{array}{rcl}
    E \,(Z_{\sigma}) & = &
    E \,(Z_{\tau}) \,P \,(\tau < \tau_2^+) + E \,(Z_{\tau_2^+}) \,P \,(\tau > \tau_2^+) \vspace*{4pt} \\ & \geq &
      \exp (- c_1 (N - 4 \sqrt N)) \,P \,(\tau < \tau_2^+) + \exp (- 2 \,c_1 N) \,P \,(\tau > \tau_2^+) \vspace*{4pt} \\ & = &
      \exp (- 2 \,c_1 N) + (\exp (- c_1 (N - 4 \sqrt N)) - \exp (- 2 \,c_1 N)) \,P \,(\tau < \tau_2^+). \end{array}
\end{equation}
 Since $\sigma$ is an almost surely finite stopping time and the process $(Z_t)$ stopped at time $\sigma$ is a supermartingale
 according to Lemma~\ref{lem:outer-supermartingale}, the optimal stopping theorem implies that
\begin{equation}
\label{eq:outer-fast-2}
\begin{array}{rcl}
  E \,(Z_{\sigma} \,| \,X_0 + Y_0 \geq N) & \leq &
  E \,(Z_{\sigma} \,| \,X_0 + Y_0 = N) \vspace*{4pt} \\ & \leq &
  E \,(Z_0 \,| \,X_0 + Y_0 = N) \ = \ \exp (- c_1 N). \end{array}
\end{equation}
 Combining \eqref{eq:outer-fast-1}--\eqref{eq:outer-fast-2}, we get
\begin{equation}
\label{eq:outer-fast-3}
\begin{array}{rcl}
  P \,(\tau < \tau_2^+ \,| \,X_0 + Y_0 \geq N) & \leq &
    \displaystyle \frac{\exp (- c_1 N) - \exp (- 2 \,c_1 N)}{\exp (- c_1 (N - 4 \sqrt N)) - \exp (- 2 \,c_1 N)} \vspace*{8pt} \\ & \leq &
    \displaystyle \frac{\exp (- c_1 N)}{\exp (- c_1 (N - 4 \sqrt N))} \ = \ \exp (- 4 \,c_1 \sqrt N). \end{array}
\end{equation}
 Since, on the event that $\tau_2^+ < \tau$, the process stays in $\Omega_- \,\cup \,\Omega_+$ before it
 hits $\Omega_2$, using Lemma~\ref{lem:outer-sum} and standard large deviation estimates for the Poisson distribution,
 we also have
\begin{equation}
\label{eq:outer-fast-4}
  P \,(\tau_2^+ > N \ | \ \tau_2^+ < \tau \ \hbox{and} \ X_0 + Y_0 \geq N) \ \leq \ \exp (- c_4 N)
\end{equation}
 for some $c_4 > 0$ and all $N$ large.
 Finally, we combine \eqref{eq:outer-fast-3}--\eqref{eq:outer-fast-4} to conclude
 $$ \begin{array}{l}
     P \,(\tau_2^+ > N \ | \ X_0 + Y_0 \geq N) \vspace*{4pt} \\ \hspace*{25pt} = \
     P \,(\tau_2^+ > N \ | \ \tau < \tau_2^+ \ \hbox{and} \ X_0 + Y_0 \geq N) \,P \,(\tau < \tau_2^+ \ | \ X_0 + Y_0 \geq N) \vspace*{4pt} \\ \hspace*{50pt} + \
     P \,(\tau_2^+ > N \ | \ \tau_2^+ < \tau \ \hbox{and} \ X_0 + Y_0 \geq N) \,P \,(\tau_2^+ < \tau \ | \ X_0 + Y_0 \geq N) \vspace*{4pt} \\ \hspace*{25pt} \leq \
     P \,(\tau < \tau_2^+ \ | \ X_0 + Y_0 \geq N) + P \,(\tau_2^+ > N \ | \ \tau_2^+ < \tau \ \hbox{and} \ X_0 + Y_0 \geq N) \vspace*{4pt} \\ \hspace*{25pt} \leq \
       \exp (- 4 \,c_1 \sqrt N) + \exp (- c_4 N) \ \leq \ \exp (- c_3 \sqrt N) \end{array} $$
 for some $c_3 > 0$.
 This completes the proof.
\end{proof} \\


\noindent The second step of the proof is established in Lemma~\ref{lem:outer-stay} below.
 The proof again relies on Lemma~\ref{lem:outer-sum} but we also need the following preliminary result.
\begin{lemma} --
\label{lem:outer-difference}
 Let $b > 8$.
 For all $N$ large,
 $$ \begin{array}{l} \lim_{\,h \to 0} \,h^{-1} \,E \,((X_{t + h} - Y_{t + h}) - (X_t - Y_t) \ | \ (X_t, Y_t) \in \Omega_-) \ \leq \ 0. \end{array} $$
\end{lemma}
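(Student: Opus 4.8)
The plan is a drift computation for $X_t-Y_t$ on $\Omega_-$, exactly parallel to the one in the proof of Lemma~\ref{lem:outer-sum}. Working with the same two‑patch comparison process as there (in particular with the inner birth rate set to zero, which only lowers the survival probability and so is legitimate for an upper bound on extinction), the drift of $X_t$ at a state $(i,j)$ is the rate of a birth at patch $x$ minus the rate of a death at $x$, that is $\tfrac{b}{2N(N-1)}\,j(j-1)(N-i)-i$, and symmetrically the drift of $Y_t$ is $\tfrac{b}{2N(N-1)}\,i(i-1)(N-j)-j$. Subtracting, for $(X_t,Y_t)=(i,j)\in\Omega_-$,
$$\lim_{h\to0}h^{-1}E\big((X_{t+h}-Y_{t+h})-(X_t-Y_t)\mid(X_t,Y_t)=(i,j)\big)=\frac{b}{2N(N-1)}\big[j(j-1)(N-i)-i(i-1)(N-j)\big]-(i-j).$$

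The key algebraic observation is the exact identity $j(j-1)(N-i)-i(i-1)(N-j)=(i-j)\big[(N-i)(N-j)-N(N-1)\big]$, obtained by expanding both sides. Substituting, the drift above factors as $(i-j)\big[\tfrac{b\,(N-i)(N-j)}{2N(N-1)}-\tfrac b2-1\big]$. On $\Omega_-$ we have $i>j$, hence $i-j\ge1>0$, so it suffices to prove the bracketed factor is negative for $N$ large. Since $0\le N-i\le N$ and $0\le N-j\le N$, the product $(N-i)(N-j)$ never exceeds $N^2$, so the bracket is at most $\tfrac{bN}{2(N-1)}-\tfrac b2-1\to-1<0$, which already gives the claim. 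One can be slightly more precise: the defining inequality $i+j>N-4\sqrt N$ of $\Omega_-$ says exactly that $(N-i)+(N-j)<N+4\sqrt N$, so by the arithmetic–geometric mean inequality $(N-i)(N-j)\le(N/2+2\sqrt N)^2=N^2/4+O(N^{3/2})$, whence $\tfrac{b\,(N-i)(N-j)}{2N(N-1)}\le b/8+O(N^{-1/2})$ and the bracket is in fact at most $-1-3b/8+o(1)$, bounded away from $0$ uniformly on $\Omega_-$.

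There is no real obstacle beyond carrying out the expansion and noticing the factorization; the only point requiring a little care is the treatment of lower‑order terms. Because I keep the exact birth rates $\tfrac{b}{2N(N-1)}j(j-1)(N-i)$ rather than their $\tfrac{b}{2N^2}j^2(N-i)$ linearizations, the identity above is exact and no $O(1)$ remainder appears; had one linearized as in Lemma~\ref{lem:outer-sum}, one would instead have to check, just as there, that the resulting $O(1)$ error is dominated — which it is, since $i-j\ge1$ forces the leading factor to have absolute value at least $1+3b/8-o(1)$. The remaining constraint $j<N/2+2\sqrt N$ and the membership $(i,j)\in\{0,1,\ldots,N\}^2$ play no role here beyond guaranteeing $N-i,\,N-j\ge0$, so the conclusion in fact holds on all of $\{i>j\}$; I would state it only on $\Omega_-$ to match its use in Lemma~\ref{lem:outer-stay}.
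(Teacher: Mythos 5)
Your proof is correct, and at bottom it follows the same route as the paper: compute the infinitesimal drift of $X_t - Y_t$ in the two-patch comparison dynamics (inner births dropped, which the paper also does implicitly since its stated drift contains no $a$-term) and check that it is nonpositive on $\Omega_-$ because $i > j$ there. The difference is only in how the sign is extracted. The paper linearizes the rates to $\psi(i,j) = N^{-2}(b/2)\,i^2(N-j)$, writes the drift as $\psi(j,i) - \psi(i,j) - (i-j) + O(1)$, and concludes from the monotonicity of $\psi$ in its two arguments together with $i>j$, leaving an $O(1)$ remainder whose domination is not spelled out. You keep the exact rates and use the identity $j(j-1)(N-i) - i(i-1)(N-j) = (i-j)\,[(N-i)(N-j) - N(N-1)]$, which indeed holds (both sides equal $(i-j)\,[N(1-i-j)+ij]$), so the drift factors exactly as $(i-j)\,\bigl[\tfrac{b\,(N-i)(N-j)}{2N(N-1)} - \tfrac{b}{2} - 1\bigr]$ with bracket at most $\tfrac{b}{2(N-1)} - 1 < 0$ once $N > 1 + b/2$. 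This buys a remainder-free bound, valid on all of $\{i>j\}$ and quantitatively sharper on $\Omega_-$ (bracket $\leq -1 - 3b/8 + o(1)$), which is convenient when the lemma is fed into Lemma~\ref{lem:outer-stay}; it also makes explicit the convention, left implicit in the paper, that $(X_t, Y_t)$ here denotes the restricted process with the $a$-births discarded, justified by monotonicity at the level of the survival probability rather than lemma by lemma. The paper's version is shorter but your exact factorization is arguably the cleaner way to settle the sign.
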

\begin{proof}
 This directly follows from the fact that the function
 $$ \psi (i, j) \ := \ N^{-2} \,(b/2) \ i^2 \,(N - j) $$
 is increasing with respect to $i$ and decreasing with respect to $j$,
 $$ \begin{array}{l}
    \lim_{\,h \to 0} \,h^{-1} \,E \,((X_{t + h} - Y_{t + h}) - (X_t - Y_t) \ | \ (X_t, Y_t) = (i, j)) \vspace*{4pt} \\ \hspace*{80pt} = \
      N^{-2} \,(b/2)(j^2 \,(N - i) - i^2 \,(N - j)) - (i - j) + O (1) \vspace*{4pt} \\ \hspace*{80pt} = \
    \psi (j, i) - \psi (i, j) - (i - j) + O (1) \end{array} $$
 and the fact that $i > j$ for all $(i, j) \in \Omega_-$.
\end{proof} \\ \\
 With Lemmas~\ref{lem:outer-sum} and \ref{lem:outer-difference} in hands, we are now ready to establish the second step of the proof,
 which can be more conveniently stated using the exit time
 $$ \tau_{\rho}^- \ := \ \inf \,\{t : (X_t, Y_t) \notin \Omega_{\rho} \}. $$
\begin{lemma} --
\label{lem:outer-stay}
 Let $b > 8$.
 There exists $c_5 > 0$ such that
 $$ P \,(\tau_0^- \leq 4 N \ | \ (X_0, Y_0) \in \Omega_1) \ \leq \ \exp (- c_5 \sqrt N) \quad \hbox{for all $N$ large}. $$
\end{lemma}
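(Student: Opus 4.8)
The plan is to re-use the exponential-supermartingale and optional-stopping scheme of Lemma~\ref{lem:outer-fast}, but applied to each coordinate separately so as to control $\min(X_t,Y_t)$ directly rather than the sum. Note that, whatever the value of $a$, the birth rate at patch $x$ is bounded below by $\tfrac{b}{2N(N-1)}\,Y_t(Y_t-1)(N-X_t)$, its death rate is exactly $X_t$, and symmetrically for $y$. The first step is to show that there is $\delta>0$, depending only on $b$, such that for all $N$ large
\[
  \lim_{h\to 0}\,h^{-1}\,E\big(X_{t+h}-X_t\ \big|\ (X_t,Y_t)=(i,j)\big)\ \geq\ \delta N
\]
for every $(i,j)\in\Omega_0$ with $i\leq N/2+2\sqrt N$, together with the symmetric estimate for $Y$. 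This is where $b>8$ is used and nothing more: when $j>N/2$ and $i\leq N/2+2\sqrt N$ the birth rate at $x$ is at least $\tfrac{b}{2N(N-1)}(N/2)(N/2-1)(N/2-2\sqrt N)=(b/16)\,N\,(1+o(1))$ while the death rate is at most $N/2+2\sqrt N$, so the drift is at least $(b/16-1/2)\,N-O(\sqrt N)$ and $b/16-1/2>0$ exactly when $b>8$.

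Second, fix a small $c_1>0$, depending only on $b$, with $e^{c_1}\leq 1+\delta$, and put $Z_t:=\exp(-c_1X_t)+\exp(-c_1Y_t)$. Applying $L_+$ to $\exp(-c_1X_t)$ (only the transitions at $x$ matter), the positive death contribution $X_t(e^{c_1}-1)\exp(-c_1X_t)$ is beaten by the negative birth contribution precisely when the birth rate at $x$ exceeds $e^{c_1}X_t$; by the first step this holds on $\Omega_0\cap\{X_t\leq N/2+2\sqrt N\}$, so there the generator of $\exp(-c_1X_t)$ is $\leq0$, whereas on $\Omega_0\cap\{X_t>N/2+2\sqrt N\}$ it is at most $N(e^{c_1}-1)\exp(-c_1(N/2+2\sqrt N))$ (the factor $\exp(-c_1X_t)$ is already exponentially small, which absorbs the case $X_t$ near $N$, where the birth rate vanishes). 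Adding the symmetric bound for the $Y$-term gives $L_+Z\leq\varepsilon_N:=2N(e^{c_1}-1)\exp(-c_1(N/2+2\sqrt N))$ on all of $\Omega_0$, so $Z_{t\wedge\tau_0^-}-\varepsilon_N\,(t\wedge\tau_0^-)$ is a supermartingale.

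Third, optional stopping at the bounded time $4N\wedge\tau_0^-$, together with the deterministic bound $Z_0\leq 2\exp(-c_1(N/2+\sqrt N))$ valid on $\Omega_1$, yields
\[
  E\big(Z_{4N\wedge\tau_0^-}\ \big|\ (X_0,Y_0)\in\Omega_1\big)\ \leq\ 2\exp\big(-c_1(N/2+\sqrt N)\big)+4N\varepsilon_N.
\]
On the event $\{\tau_0^-\leq 4N\}$ the process has left $\Omega_0$, so one coordinate is at most $N/2$ and hence $Z_{4N\wedge\tau_0^-}\geq\exp(-c_1N/2)$ there; dividing by $\exp(-c_1N/2)$ gives $P(\tau_0^-\leq 4N\mid(X_0,Y_0)\in\Omega_1)\leq 2\exp(-c_1\sqrt N)+8N^2(e^{c_1}-1)\exp(-2c_1\sqrt N)$, which is at most $\exp(-c_5\sqrt N)$ for any fixed $c_5<c_1$ and all $N$ large.

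The only substantive point is the first step: one must verify that the single-patch downward drift stays strictly negative --- equivalently, that the between-patch birth rate stays above the death rate --- throughout the whole layer $N/2<X\leq N/2+2\sqrt N$ and uniformly in $N$, not merely at density $1/2$, which is exactly what the inequality $b>8$ buys. Once this is in place, the remainder is the optional-stopping bookkeeping already used for Lemma~\ref{lem:outer-fast}, and the only thing to double-check is that the crude bound on $L_+\exp(-c_1X_t)$ in the region $X_t>N/2+2\sqrt N$ (including $X_t$ near $N$) is indeed dominated by $\varepsilon_N$, as indicated.
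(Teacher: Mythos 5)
Your argument is correct, and it takes a genuinely different route from the paper. The paper first shows (combining its Lemmas \ref{lem:outer-sum} and \ref{lem:outer-difference}) that the low coordinate drifts upward at rate of order $N$, turns this into an exponential supermartingale in the distance $D_t$ to $\Omega_2$, applies optional stopping to bound the probability $\exp(-c_7\sqrt N)$ of exiting $\Omega_0$ before returning to $\Omega_2$ from $\partial\Omega_1$, and then converts this into the time-$4N$ statement by counting upcrossings of $D_t$ across $(0,\sqrt N)$ and invoking Poisson large deviation bounds on the number of jumps. You instead work with the single function $\exp(-c_1X_t)+\exp(-c_1Y_t)$, observe that its generator is nonpositive wherever a coordinate lies in the layer $(N/2,\,N/2+2\sqrt N]$ (your Step 1 drift bound, which is essentially the paper's Lemma \ref{lem:outer-up} applied to whichever coordinate is low, and is where $b>8$ enters in both proofs), and absorb the region where the drift condition is not checked into an additive error $\varepsilon_N$ of order $\exp(-c_1(N/2+2\sqrt N))$; a single application of optional stopping at the bounded time $4N\wedge\tau_0^-$ plus Markov's inequality then gives the bound, since $4N\varepsilon_N$ is smaller than the exit level $\exp(-c_1N/2)$ by a factor $8N^2(e^{c_1}-1)\exp(-2c_1\sqrt N)$. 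Your computation of the generator criterion (birth rate at least $e^{c_1}$ times the death rate), the handling of the near-full region where births vanish, and the final arithmetic all check out, and the scheme is insensitive to extra births from patches outside $\{x,y\}$, as in the paper. What your route buys is the elimination of the excursion/upcrossing counting and the Poisson large deviation step (and of the auxiliary distance process $D_t$), at the cost of the slightly less standard ``supermartingale up to an additive error'' bookkeeping; the paper's two-stage argument is more modular in that the hitting-probability estimate \eqref{eq:outer-stay-2} is reused conceptually elsewhere, but for this lemma your one-step argument is shorter and, if anything, cleaner.
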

\begin{proof}
 From Lemmas~\ref{lem:outer-sum} and \ref{lem:outer-difference}, we deduce that
 $$ \begin{array}{l}
    \lim_{\,h \to 0} \,h^{-1} \,E \,(Y_{t + h} - Y_t \ | \ (X_t, Y_t) = (i, j)) \vspace*{4pt} \\ \hspace*{25pt} = \
     (1/2) \,\lim_{\,h \to 0} \,h^{-1} \,E \,((X_{t + h} + Y_{t + h}) - (X_t + Y_t) \ | \ (X_t, Y_t) = (i, j)) \vspace*{4pt} \\ \hspace*{50pt} - \
     (1/2) \,\lim_{\,h \to 0} \,h^{-1} \,E \,((X_{t + h} - Y_{t + h}) - (X_t - Y_t) \ | \ (X_t, Y_t) = (i, j)) \vspace*{4pt} \\ \hspace*{25pt} \geq \
     N (1/8)(b/8 - 1) \ > \ 0 \end{array} $$
 for all $(i, j) \in \Omega_-$.
 By symmetry,
 $$ \begin{array}{l} \lim_{\,h \to 0} \,h^{-1} \,E \,(X_{t + h} - X_t \ | \ (X_t, Y_t) = (i, j)) \ \geq \ N (1/8)(b/8 - 1) \ > \ 0 \end{array} $$
 for all $(i, j) \in \Omega_+$.
 In particular, letting
 $$ D_t \ := \ \dist ((X_t, Y_t), \Omega_2) \ = \ \min \,\{|X_t - u| \vee |Y_t - v| : (u, v) \in \Omega_2 \} $$
 there exists $c_6 > 0$ such that
\begin{equation}
\label{eq:outer-stay-1}
  \begin{array}{l} \lim_{\,h \to 0} \,h^{-1} \,E \,(D_{t + h} - D_t \ | \ (X_t, Y_t) = (i, j)) \ \geq \ c_6 N \end{array}
\end{equation}
 for all $(i, j) \in \Omega_0 \setminus \Omega_2$ therefore, there exists $c_7 > 0$ such that
 $$ \begin{array}{l} \lim_{\,h \to 0} \,h^{-1} \,E \,(\exp (c_7 D_{t + h}) - \exp (c_7 D_t) \ | \ (X_t, Y_t) = (i, j)) \ \leq \ 0 \quad \hbox{for all} \quad (i, j) \in \Omega_0 \setminus \Omega_2. \end{array} $$
 This follows from \eqref{eq:outer-stay-1} by using the same argument as in Lemma~\ref{lem:outer-supermartingale}.
 Applying the optimal stopping theorem at the stopping time $\sigma := \min (\tau_0^-, \tau_2^+)$, we deduce that
 $$ \begin{array}{l}
     E \,(\exp (c_7 D_0) \ | \ (X_0, Y_0) \in \partial \Omega_1) \ = \ \exp (c_7 \sqrt N) \vspace*{4pt} \\ \hspace*{25pt} \geq \
     E \,(\exp (c_7 D_{\tau_0^-})) \,P \,(\tau_0^- < \tau_2^+ \ | \ (X_0, Y_0) \in \partial \Omega_1) \vspace*{4pt} \\ \hspace*{50pt} + \
     E \,(\exp (c_7 D_{\tau_2^+})) \,(1 - P \,(\tau_0^- < \tau_2^+ \ | \ (X_0, Y_0) \in \partial \Omega_1)) \vspace*{4pt} \\ \hspace*{25pt} = \
     1 + (\exp (2 \,c_7 \sqrt N) - 1) \,P \,(\tau_0^- < \tau_2^+ \ | \ (X_0, Y_0) \in \partial \Omega_1) \end{array} $$
 from which it follows as in \eqref{eq:outer-fast-3} that
\begin{equation}
\label{eq:outer-stay-2}
   P \,(\tau_0^- < \tau_2^+ \ | \ (X_0, Y_0) \in \partial \Omega_1) \ \leq \ \exp (- c_7 \,\sqrt N).
\end{equation}
 To conclude, we let
 $$ u (\tau_0^-) \ := \ \# \ \hbox{upcrossings of $D_t$ across the interval $(0, \sqrt N)$ by time $\tau_0^-$}. $$
 The inequality in \eqref{eq:outer-stay-2} indicates that the number of upcrossings of the distance to $\Omega_2$ before the process leaves the larger
 square $\Omega_0$ is stochastically larger than a geometric random variable whose success probability tends to zero, which implies the existence
 of $c_8 > 0$ such that
 $$ \begin{array}{rcl}
      P \,(u (\tau_0^-) \leq N^2) & = & 1 - P \,(u (\tau_0^-) > N^2) \ \leq \ 1 - (1 - \exp (- c_7 \sqrt N))^{N^2} \vspace*{4pt} \\ & \leq &
      2 \,N^2 \exp (- c_7 \,\sqrt N) \ \leq \ \exp (- c_8 \sqrt N) \end{array} $$
 for all $N$ sufficiently large.
 Since in addition
\begin{enumerate}
 \item the number of jumps by time $4N$ is at most of the order of $N^2$ \vspace*{2pt}
 \item the number of jumps to perform more than $N^2$ upcrossings is at least $N^{5/2}$
\end{enumerate}
 the lemma follows from large deviation estimates for the Poisson distribution.
\end{proof} \\


\noindent Using the first two steps given by Lemmas~\ref{lem:outer-fast} and \ref{lem:outer-stay} respectively,  we now prove the third step:
 the first invasion is successful with probability close to one when $N$ is large.
\begin{lemma} --
\label{lem:outer-first-invasion}
 Let $b > 8$.
 There exists $c_9 > 0$ such that
 $$ P \,((X_t, Y_t) \notin \Omega_0 \ \hbox{for some} \ t \in (N, 4N) \ | \ X_0 + Y_0 \geq N) \ \leq \ \exp (- c_9 \sqrt N) $$
 for all $N$ sufficiently large.
\end{lemma}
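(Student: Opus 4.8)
The plan is to obtain the lemma by splicing together the first two steps, Lemmas~\ref{lem:outer-fast} and \ref{lem:outer-stay}, and restarting the process at the random time $\tau_2^+$. The key structural fact is the nesting $\Omega_2 \subset \Omega_1 \subset \Omega_0$: whenever $(X_t,Y_t)$ first enters $\Omega_2$ it is in particular in $\Omega_1$, so Lemma~\ref{lem:outer-stay} can be applied to the trajectory seen from that moment on. Since $\tau_2^+$ is a stopping time and $(X_t,Y_t)$ is treated as a Markov chain on the state space described above, the strong Markov property lets us regard the piece of trajectory after $\tau_2^+$ as a fresh copy of the process started from $(X_{\tau_2^+},Y_{\tau_2^+})\in\Omega_1$.

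Concretely, first I would invoke Lemma~\ref{lem:outer-fast} to get $P(\tau_2^+ > N \mid X_0 + Y_0 \geq N) \leq \exp(-c_3\sqrt N)$, so that, with conditional probability at least $1-\exp(-c_3\sqrt N)$, the process is in $\Omega_2$ at the (random) time $\tau_2^+ \leq N$. Conditioning on the history up to $\tau_2^+$ on this event, Lemma~\ref{lem:outer-stay} shows that the exit time of $(X_t,Y_t)$ from $\Omega_0$ counted from $\tau_2^+$ exceeds $4N$ outside an event of probability at most $\exp(-c_5\sqrt N)$. On the intersection of these two events $(X_t,Y_t)$ stays in $\Omega_0$ for every $t\in[\tau_2^+,\tau_2^+ + 4N]$, and since $0\le\tau_2^+\le N$ this interval contains $(N,4N)$. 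A union bound then gives
\[
 P\big((X_t,Y_t)\notin\Omega_0 \ \text{for some}\ t\in(N,4N)\ \big|\ X_0 + Y_0 \geq N\big)\ \leq\ \exp(-c_3\sqrt N) + \exp(-c_5\sqrt N),
\]
and taking $0 < c_9 < \min(c_3,c_5)$ absorbs the factor $2$ for all $N$ large, which proves the lemma.

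I do not expect a genuine obstacle here: the argument is bookkeeping assembled from the two preceding steps, with no new idea. The only points deserving a line of care are (i) that Lemma~\ref{lem:outer-stay} was stated for starts in $\Omega_1$ whereas we restart from $\Omega_2\subset\Omega_1$, which is harmless since the estimate there is uniform over the starting configuration in $\Omega_1$ (alternatively one reduces to a start on $\partial\Omega_1$ by attractiveness), and (ii) that the conditioning event $\{X_0 + Y_0 \geq N\}$ is exactly the hypothesis under which Lemma~\ref{lem:outer-fast} is proved, so no additional care is needed to pass to the restarted process.
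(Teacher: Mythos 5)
Your proposal is correct and follows essentially the same route as the paper: Lemma~\ref{lem:outer-fast} bounds $P(\tau_2^+ > N)$, Lemma~\ref{lem:outer-stay} (applied after restarting at $\tau_2^+$, using $\Omega_2 \subset \Omega_1$) bounds the exit from $\Omega_0$ within the next $4N$ time units, and a union bound gives the estimate. The paper simply writes this decomposition in one display, leaving the strong Markov restart implicit, which you have spelled out.
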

\begin{proof}
 Applying Lemmas~\ref{lem:outer-fast} and \ref{lem:outer-stay} we obtain
 $$ \begin{array}{l}
     P \,((X_t, Y_t) \notin \Omega_0 \ \hbox{for some} \ t \in (N, 4N) \ | \ X_0 + Y_0 \geq N) \vspace*{4pt} \\ \hspace*{40pt} \leq \
     P \,(\tau_2^+ > N \ | \ X_0 + Y_0 \geq N) + P \,(\tau_0^- \leq 4 N \ | \ (X_0, Y_0) \in \Omega_1) \vspace*{4pt} \\ \hspace*{40pt} \leq \
       \exp (- c_3 \sqrt N) + \exp (- c_5 N) \ \leq \ \exp (- c_9 \sqrt N) \end{array} $$
 for all $N$ sufficiently large and a suitable constant $c_9 > 0$.
\end{proof} \\


\noindent The fourth step, which deals with subsequent invasions, is proved in Lemma \ref{lem:outer-next-invasion} below and can be deduced
 from the result of third step by also using the following preliminary lemma.
\begin{lemma} --
\label{lem:outer-up}
 Let $b > 8$.
 For all $N$ large,
 $$ \begin{array}{l} \lim_{\,h \to 0} \,h^{-1} \,E \,(Y_{t + h} - Y_t \ | \ X_t > N/2 \ \hbox{and} \ Y_t < N/2 + 2 \sqrt N) \ \geq \ N (1/4)(b/8 - 1). \end{array} $$
\end{lemma}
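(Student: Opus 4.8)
The plan is to establish the bound pointwise, that is, to show
$$ \lim_{h \to 0} h^{-1} E \,(Y_{t+h} - Y_t \mid (X_t, Y_t) = (i,j)) \ \geq \ N (1/4)(b/8 - 1) $$
for every configuration with $(X_t, Y_t) = (i,j)$, $i > N/2$ and $j < N/2 + 2 \sqrt N$; since the lower bound does not depend on the underlying configuration it then passes without change to the conditional expectation given the event $\{X_t > N/2 \ \hbox{and} \ Y_t < N/2 + 2 \sqrt N\}$. To get the pointwise bound I would read the drift of $Y_t$ off the mesoscopic generator \eqref{eq:macro-model}: the transitions affecting $Y$ are the deaths at $y$, at rate $j$, and the births at $y$, which occur within the patch at rate $a \,(N(N-1))^{-1} j(j-1)(N-j)$ and from each neighbor $z \sim y$ at rate $(2N(N-1))^{-1} b \,\xi_t(z)(\xi_t(z)-1)(N-j)$ (recall $M = 1$). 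All these birth rates are nonnegative and $x$ is one of the two neighbors of $y$, so discarding the within-patch term and the term coming from the second neighbor of $y$ gives
$$ \lim_{h \to 0} h^{-1} E \,(Y_{t+h} - Y_t \mid (X_t, Y_t) = (i,j)) \ \geq \ \frac{b}{2N(N-1)} \, i(i-1)(N-j) - j. $$

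Next I would estimate this elementary expression on the region of interest. For $N$ large, $i > N/2$ gives $i(i-1) \geq N^2/4 - N \geq 0$, while $j < N/2 + 2 \sqrt N$ gives both $N - j \geq N/2 - 2 \sqrt N > 0$ and $-j \geq -N/2 - 2 \sqrt N$; since also $N(N-1) = N^2 + O(N)$, an elementary expansion (of the same flavour as in the proof of Lemma~\ref{lem:outer-sum}) yields
$$ \frac{b}{2N(N-1)} \, i(i-1)(N-j) - j \ \geq \ \frac{b}{16}\,N - \frac12\,N + O(\sqrt N) \ = \ N \Big( \frac{b}{16} - \frac12 \Big) + O(\sqrt N). $$

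The role of the hypothesis $b > 8$ is then simply to convert $b/16 - 1/2$ into the advertised constant: one checks the identity $b/16 - 1/2 = (1/4)(b/8 - 1) + (b/32 - 1/4)$ and notes that $b/32 - 1/4 > 0$ exactly when $b > 8$. Hence the right-hand side above equals $N(1/4)(b/8 - 1) + N(b/32 - 1/4) + O(\sqrt N)$, and since $b/32 - 1/4$ is a fixed positive constant the term $N(b/32 - 1/4)$ dominates the $O(\sqrt N)$ error once $N$ is large, which delivers the pointwise bound and hence the lemma.

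I do not expect any genuine obstacle here: this is a drift computation of exactly the same type as Lemmas~\ref{lem:outer-sum} and~\ref{lem:outer-difference}. The only two points that require a little care are verifying that the two discarded birth rates are genuinely nonnegative, so that dropping them is legitimate in a lower bound, and keeping track of the constants precisely enough to see that the ``bare'' drift bound $N(b/16 - 1/2)$ is strictly larger than $N(1/4)(b/8 - 1)$ precisely because $b > 8$, which is what leaves room to absorb the lower-order $O(\sqrt N)$ terms for $N$ large.
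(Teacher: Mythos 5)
Your proposal is correct and follows essentially the same route as the paper: lower-bound the drift of $Y_t$ by keeping only the birth term driven by patch $x$ and the death term, evaluate it at the extreme values $i = N/2$, $j = N/2 + 2\sqrt N$ (the paper does this via the monotonicity of $\psi(i,j) = N^{-2}(b/2)\,i^2(N-j)$), obtaining a leading term $N(b/16 - 1/2) = N(1/2)(b/8-1)$, and use $b > 8$ to absorb the $O(\sqrt N)$ error into half of that constant. No gaps.
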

\begin{proof}
 Using again the monotonicity of $\psi$ introduced in Lemma \ref{lem:outer-difference}, we get
 $$ \begin{array}{l}
    \lim_{\,h \to 0} \,h^{-1} \,E \,(Y_{t + h} - Y_t \ | \ (X_t, Y_t) = (i, j)) \ = \ \psi (i, j) - j \vspace*{4pt} \\ \hspace*{50pt} \geq \
    \psi (N/2, N/2 + 2 \sqrt N) - (N/2 + 2 \sqrt N) \vspace*{4pt} \\ \hspace*{50pt} = \
      N^{-2} \,(b/2)(N/2)^2 \,(N - (N/2 + 2 \sqrt N)) - (N/2 + 2 \sqrt N) \vspace*{4pt} \\ \hspace*{50pt} = \
      N (1/2)(b/8 - 1) + O (\sqrt N) \ \geq \ N (1/4)(b/8 - 1) \end{array} $$
 for all $i > N/2$ and $j < N/2 + 2 \sqrt N$ and all $N$ sufficiently large.
\end{proof}
\begin{lemma} --
\label{lem:outer-next-invasion}
 Let $b > 8$.
 There exists $c_{10} > 0$ such that
 $$ P \,((X_t, Y_t) \notin \Omega_0 \ \hbox{for some} \ t \in (2N, 4N) \ | \ X_t > N/2 \ \hbox{for all} \ t \in (0, 2N)) \ \leq \ \exp (- c_{10} \sqrt N) $$
 for all $N$ sufficiently large.
\end{lemma}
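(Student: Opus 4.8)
The plan is to reduce the statement to the third step of the proof, Lemma~\ref{lem:outer-first-invasion}. Write $B$ for the conditioning event $\{X_t>N/2$ for all $t\in(0,2N)\}$ and set $\sigma:=\inf\,\{t:Y_t\geq N/2\}$. On $\{\sigma\leq N\}$ one has $X_\sigma>N/2$ (because $\sigma<2N$ and $B$ holds) and $Y_\sigma\geq N/2$ (by definition of $\sigma$), hence $X_\sigma+Y_\sigma\geq N$; moreover on $\{\sigma\leq N\}$ the interval $(\sigma+N,\sigma+4N)$ contains $(2N,4N)$. Splitting $B$ into the part carried by the history up to $\sigma$ and the part lying after $\sigma$, dropping the latter (which can only make an upper bound worse), and applying the strong Markov property at $\sigma$ together with Lemma~\ref{lem:outer-first-invasion}, we get that the conditional probability of $\{(X_t,Y_t)\notin\Omega_0$ for some $t\in(2N,4N)\}\cap\{\sigma\leq N\}$ given $B$ is at most $\exp(-c_9\sqrt N)/P\,(B)$. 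In the block construction this conditioning event is supplied by the previous step with probability $1-o(1)$ by Lemma~\ref{lem:outer-stay}, so $P\,(B)\geq 1/2$ and this contribution is $\leq\exp(-c\sqrt N)$. It therefore suffices to prove $P\,(\sigma>N\mid B)\leq\exp(-c\sqrt N)$, after which the lemma follows for a suitable $c_{10}>0$.

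To estimate $P\,(\sigma>N\mid B)$ I first deal with the fact that $B$ is not an initial--condition event by introducing the stopping time $\hat\sigma:=\inf\,\{t:X_t\leq N/2\}$, so that $B=\{\hat\sigma\geq 2N\}$. On $\{\sigma\wedge\hat\sigma>N\}$ one has $X_t>N/2$ and $Y_t<N/2<N/2+2\sqrt N$ for every $t\leq N$, so Lemma~\ref{lem:outer-up} applies and the mean increment of $Y_t$ per unit time stays at least $N\,(1/4)(b/8-1)>0$ throughout $[0,N]$; concretely, on that event the birth rate at the patch $y$ stays above a positive multiple of $N$ strictly larger than the (at most $N/2$) death rate there, which is where the hypothesis $b>8$ is used. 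Since $Y$ jumps at rate $O(N)$, standard Poisson large deviation estimates as in Lemmas~\ref{lem:outer-fast} and~\ref{lem:outer-stay} then show that, once $N$ is large, $Y$ cannot stay below $N/2$ throughout an interval of fixed positive length (a fortiori throughout $[0,N]$) except on an event of probability at most $\exp(-cN)$. Hence $P\,(\sigma\wedge\hat\sigma>N)\leq\exp(-cN)$, and since $\{\sigma>N\}\cap B\subseteq\{\sigma\wedge\hat\sigma>N\}$ this yields $P\,(\{\sigma>N\}\cap B)\leq\exp(-cN)$ and so $P\,(\sigma>N\mid B)\leq 2\exp(-cN)\leq\exp(-c\sqrt N)$.

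The step I expect to be the main obstacle is this last estimate, and in particular carrying it out while keeping track of the conditioning: the whole mechanism is the monotone drift furnished by Lemma~\ref{lem:outer-up} whenever the neighbouring patch $x$ is above half full, but $B$ constrains $X$ over a whole time interval rather than at a single instant, so everything must be routed through the stopping time $\hat\sigma$ and the strong Markov property at $\sigma$ in order to invoke Lemma~\ref{lem:outer-up} and Lemma~\ref{lem:outer-first-invasion} in the unconditioned forms in which they were proved. Once this is arranged, the deviation estimates needed are of exactly the kind already developed in Lemmas~\ref{lem:outer-supermartingale}--\ref{lem:outer-stay}.
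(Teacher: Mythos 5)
Your overall strategy coincides with the paper's: use the drift of $Y$ supplied by Lemma~\ref{lem:outer-up} (valid while $X_t>N/2$ and $Y_t<N/2+2\sqrt N$) plus Poisson large deviation estimates to show that, under the conditioning, the pair reaches $\{X+Y\geq N\}$ within time $N$ up to probability $\exp(-cN)$, and then invoke Lemma~\ref{lem:outer-first-invasion} from that moment, checking that its time window covers $(2N,4N)$. Your explicit hitting time $\sigma$ and the containment $(2N,4N)\subseteq(\sigma+N,\sigma+4N)$ are in fact a more careful rendering of the step the paper carries out at the deterministic time $N$.

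There is, however, a genuine gap in how you handle the conditioning. Both of your estimates are obtained as bounds on unconditional probabilities of intersections with $B$ and are then converted into conditional bounds by dividing by $P\,(B)$, after which you dispose of the factor $1/P\,(B)$ by asserting $P\,(B)\geq 1/2$ ``because the block construction supplies the conditioning event with probability $1-o(1)$.'' Nothing in the statement of the lemma gives a lower bound on $P\,(B)$, and the appeal to the block construction is circular: the lemma is proved precisely so that the bound \eqref{eq:outer-survival-2}, $P\,(z\pm1\notin Z_{n+1}\mid z\in Z_n)\leq\exp(-c_{10}\sqrt N)$, can be fed into the comparison with oriented percolation, and there the conditioning event $\{z\in Z_n\}$ (for the process started from $\ind_x$) has no uniform lower bound on its probability over $(z,n)$; Lemma~\ref{lem:outer-stay} provides per-step conditional estimates, not a marginal bound of $1/2$. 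So what your argument actually establishes is a bound of the form $\exp(-c\sqrt N)/P\,(B)$, which is weaker than the statement and useless in the intended application whenever $P\,(B)$ is small. The paper's proof never introduces $1/P\,(B)$: it bounds $P\,(Y_t<N/2 \ \text{for all} \ t\leq N\mid B)$ directly — the conditioning is used only to guarantee the hypothesis $X_t>N/2$ of Lemma~\ref{lem:outer-up} throughout the interval — and then applies Lemma~\ref{lem:outer-first-invasion} conditionally on the state having $X+Y\geq N$, via the Markov property and the fact that that lemma's estimate is uniform over initial states with $X_0+Y_0\geq N$. To repair your argument in the same spirit, run both of your steps conditionally: on $\{\sigma\leq N\}$ apply the uniform version of Lemma~\ref{lem:outer-first-invasion} given $\mathcal F_\sigma$ so that the bound $\exp(-c_9\sqrt N)$ survives the conditioning on $B$ without any division, and bound $P\,(\sigma>N\mid B)$ by the drift argument carried out under the conditional law (where $B$ only reinforces the hypothesis of Lemma~\ref{lem:outer-up}), rather than by $P\,(\sigma\wedge\hat\sigma>N)/P\,(B)$.
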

\begin{proof}
 A direct application of Lemma~\ref{lem:outer-up} together with large deviation estimates for the Poisson random variable implies the
 existence of $c_{11} > 0$ such that
 $$ \begin{array}{l}
      P \,(X_t + Y_t < N \ \hbox{for all} \ t \leq N \ | \ X_t > N/2 \ \hbox{for all} \ t \in (0, 2N)) \vspace*{4pt} \\ \hspace*{25pt} \leq \
      P \,(Y_t < N/2 \ \hbox{for all} \ t \leq N \ | \ X_t > N/2 \ \hbox{for all} \ t \in (0, 2N)) \ \leq \ \exp (- c_{11} N) \end{array} $$
 for all $N$ sufficiently large.
 Then, applying Lemma~\ref{lem:outer-first-invasion}, we get
 $$ \begin{array}{l}
     P \,((X_t, Y_t) \notin \Omega_0 \ \hbox{for some} \ t \in (2N, 4N) \ | \ X_t > N/2 \ \hbox{for all} \ t \in (0, 2N)) \vspace*{4pt} \\ \hspace*{25pt} \leq \
     P \,((X_t, Y_t) \notin \Omega_0 \ \hbox{for some} \ t \in (2N, 4N) \ | \ X_N + Y_N \geq N) \vspace*{4pt} \\ \hspace*{50pt} + \
     P \,(X_t + Y_t < N \ \hbox{for all} \ t \leq N \ | \ X_t > N/2 \ \hbox{for all} \ t \in (0, 2N)) \vspace*{4pt} \\ \hspace*{25pt} \leq \
       \exp (- c_9 \sqrt N) + \exp (- c_{11} N) \ \leq \ \exp (- c_{10} \sqrt N) \end{array} $$
 for all $N$ large and a suitable constant $c_{10} > 0$.
\end{proof} \\


\noindent Finally, we deduce the first part of the theorem in the next lemma relying on Lemmas~\ref{lem:outer-first-invasion}
 and \ref{lem:outer-next-invasion} as well as a block construction to compare the process with oriented percolation.
\begin{lemma} --
\label{lem:outer-survival}
 Assume that $b > 8$ and $M = 1$. There exists $c > 0$ such that
 $$ P \,(\xi_t = \0 \ \hbox{for some} \ t > 0 \ | \ \xi_0 = \ind_x) \ \leq \ \exp (- c \sqrt N) \quad \hbox{for all $N$ sufficiently large}. $$
\end{lemma}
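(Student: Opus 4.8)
The plan is to run a standard block construction, feeding it with the probability estimates from Lemmas~\ref{lem:outer-first-invasion} and \ref{lem:outer-next-invasion}. Since $M = 1$, the process at a given pair of adjacent patches interacts only with its immediate neighbors, so the natural space--time block to use has spatial width a few lattice sites and temporal length $2N$ (so that two consecutive blocks overlap in the time variable and a ``full'' pair of patches at time $0$ becomes a full pair of patches at a shifted site at time $2N$, with high probability). Concretely, I would declare a site $(m, n) \in \Z \times \Z_+$ of the renormalized lattice to be \emph{good} (wet) if $(\xi_{2Nn}(x_m), \xi_{2Nn}(x_m \pm 1)) \in \Omega_0$ for the corresponding physical patches $x_m := x + m$; the coupling target is an oriented site percolation process on $\{(m,n) : m + n \text{ even}\}$ in which open sites percolate.

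First I would set up the coupling so that, conditionally on a pair of adjacent patches being in $\Omega_0$ at a renormalized time, Lemma~\ref{lem:outer-next-invasion} guarantees that \emph{both} of the two pairs of adjacent patches that are one site to the left and one site to the right are in $\Omega_0$ at the next renormalized time, except on an event of probability at most $\exp(-c_{10}\sqrt N)$; Lemma~\ref{lem:outer-first-invasion} handles the very first step starting from $\xi_0 = \ind_x$, which trivially satisfies $X_0 + Y_0 \geq N$. The events governing disjoint space--time blocks are not independent, but because $M = 1$ each block's evolution depends on the graphical representation only in a bounded space--time region, so the construction yields a $K$-dependent oriented percolation process for some finite range $K$; by the standard comparison result (Durrett's lemma), when the local failure probability $\exp(-c_{10}\sqrt N)$ is small enough — i.e.\ for all $N$ sufficiently large — this dominates a supercritical Bernoulli oriented percolation. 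Consequently the set of wet sites percolates with positive probability, and in fact the probability that the origin is \emph{not} in an infinite wet cluster is bounded by a geometric-type sum $\sum_{k} C^k \exp(-c_{10} k \sqrt N) \leq \exp(-c\sqrt N)$ for a suitable $c > 0$.

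On the event that the renormalized process percolates, there is an infinite open path from the origin, along which the physical configuration always has at least one nonempty patch; hence $\xi_t \neq \0$ for all $t > 0$. Therefore
$$ P\,(\xi_t = \0 \ \hbox{for some} \ t > 0 \ | \ \xi_0 = \ind_x) \ \leq \ P\,(\hbox{origin not in the infinite wet cluster}) \ \leq \ \exp(-c\sqrt N) $$
for all $N$ sufficiently large, which is the claim. I would keep the block-construction bookkeeping brief and cite the comparison theorem rather than reprove it.

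The main obstacle is the bookkeeping of the block event itself: one must choose the spatial and temporal scales of the block, and the precise ``good'' event, so that (i) a good site at time $n$ forces good sites at the two diagonal neighbors at time $n+1$ with the stated error probability, using Lemma~\ref{lem:outer-next-invasion} (whose hypothesis is that a patch stays above $N/2$ throughout a time interval of length $2N$ — this has to be matched against the conclusion ``$(X_t, Y_t) \in \Omega_0$ for $t \in (N, 4N)$'', which does contain such an interval since $\Omega_0 \subset \{i > N/2\}$), and (ii) the dependence range $K$ stays finite, which is immediate from $M = 1$ and the finite-speed-of-propagation property of the graphical representation. Once the block event is correctly specified, the rest is the routine invocation of the oriented-percolation comparison and the observation that percolation of wet sites implies survival of the particle system.
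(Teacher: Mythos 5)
Your proposal is correct and follows essentially the same route as the paper: the same renormalized good events (the paper declares $(z,n)$ good when $(\xi_t(z),\xi_t(z+1))\in\Omega_0$ for all $t\in((2n+2)N,(2n+4)N)$, i.e.\ exactly the interval version you anticipate is needed to match the hypothesis of Lemma~\ref{lem:outer-next-invasion}), fed by Lemmas~\ref{lem:outer-first-invasion} and \ref{lem:outer-next-invasion}, and coupled to one-dependent oriented site percolation with closure probability $\exp(-c_{10}\sqrt N)$ via Durrett's comparison theorem. The only divergence is the final quantitative step: you extract $\exp(-c\sqrt N)$ from a single-site contour (Peierls) sum, whereas the paper first forces every block in the triangle up to level $N$ to be good (a union bound costing $(1/2)(N+1)(N+2)\exp(-c_{10}\sqrt N)$), so that the full interval $\{-N,\ldots,N\}$ is wet at level $N$, and then invokes the exponential-in-width survival estimate from Section~10 of Durrett (1984); both bookkeeping schemes yield the stated bound.
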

\begin{proof}
 The last ingredient is to compare the process properly rescaled in space and time with oriented site percolation on
 the directed graph $\mathcal H$ with vertex set
 $$ H \ := \ \{(z, n) \in \Z \times \Z_+ : z + n \ \hbox{is even} \} $$
 and in which there is an oriented edge
 $$ (z, n) \to (z', n') \quad \hbox{if and only if} \quad z' = z \pm 1 \ \ \hbox{and} \ \ n' = n + 1. $$
 See Durrett \cite{durrett_1984} for a definition and a review of oriented percolation.
 In order to couple the two processes, we declare site $(z, n) \in \mathcal H$ to be good whenever
 $$ (\xi_t (z), \xi_t (z + 1)) \in \Omega_0 \quad \hbox{for all} \quad t \in ((2n + 2) N, (2n + 4) N) $$
 and let $Z_n$ be the set of good sites at level $n$, i.e.,
 $$ Z_n \ := \ \{z \in \Z : (z, n) \in H \ \hbox{is good} \}. $$
 On the one hand, Lemma~\ref{lem:outer-first-invasion} implies that
\begin{equation}
\label{eq:outer-survival-1}
  \begin{array}{l}
    P \,(0 \notin Z_0 \ | \ \xi_0 (0) = N) \vspace*{4pt} \\ \hspace*{25pt} = \
    P \,((\xi_t (0), \xi_t (1)) \notin \Omega_0 \ \hbox{for some} \ t \in (2N, 4N) \ | \ \xi_0 (0) = N) \vspace*{4pt} \\ \hspace*{25pt} \leq \
    P \,((X_t, Y_t) \notin \Omega_0 \ \hbox{for some} \ t \in (N, 4N) \ | \ X_0 + Y_0 = N) \ \leq \ \exp (- c_9 \sqrt N). \end{array}
\end{equation}
 On the other hand, Lemma~\ref{lem:outer-next-invasion} shows that
\begin{equation}
\label{eq:outer-survival-2}
 \begin{array}{l}
   P \,(z \pm 1 \notin Z_{n + 1} \ | \ z \in Z_n) \vspace*{4pt} \\ \hspace*{25pt} = \
   P \,((\xi_t (z + 1), \xi_t (z + 2)) \notin \Omega_0 \ \hbox{for some} \ t \in ((2n + 4)N, (2n + 6)N) \ | \vspace*{4pt} \\ \hspace*{60pt}
        (\xi_t (z), \xi_t (z + 1)) \in \Omega_0 \ \hbox{for all} \ t \in ((2n + 2)N, (2n + 4)N)) \vspace*{4pt} \\ \hspace*{25pt} \leq \
   P \,((X_t, Y_t) \notin \Omega_0 \ \hbox{for some} \ t \in (2N, 4N) \ | \ X_t > N/2 \ \hbox{for all} \ t \in (0, 2N)) \vspace*{4pt} \\ \hspace*{25pt} \leq \
     \exp (- c_{10} \sqrt N). \end{array}
\end{equation}
 The inequality \eqref{eq:outer-survival-2} and Theorem 4.3 in \cite{durrett_1995} imply the existence of a coupling between the contact process
 with sexual reproduction and oriented percolation.
 More precisely, letting $W_n$ be the set of wet sites at level $n$ in a one dependent oriented site percolation process in which sites
 are closed with probability $\exp (- c_{10} \sqrt N)$, the processes can be coupled such that
\begin{equation}
\label{eq:outer-survival-3}
  W_n \,\subset \,Z_n \ \ \hbox{for all} \ n \in \Z_+ \quad \hbox{whenever} \quad W_0 \,\subset \,Z_0.
\end{equation}
 In other respects, taking $N$ large enough so that $1 - \exp (- c_{10} \sqrt N)$ is larger than the critical value of one dependent oriented
 site percolation, we have
\begin{equation}
\label{eq:outer-survival-4}
   P \,(W_n = \varnothing \ \hbox{for some} \ n \ | \ W_N = \{- N, \ldots, N \}) \ \leq \ \exp (- c_{12} N)
\end{equation}
 for a suitable constant $c_{12} > 0$.
 See Section 10 of \cite{durrett_1984} for a proof.
 Moreover,
\begin{equation}
\label{eq:outer-survival-5}
 \begin{array}{l}
   P \,(W_N \neq \{- N, \ldots, N \} \ | \ W_0 = \{0 \}) \vspace*{4pt} \\ \hspace*{25pt} = \
   P \,(W_n \neq \{- n, \ldots, n \} \ \hbox{for some} \ n \leq N \ | \ W_0 = \{0 \}) \vspace*{4pt} \\ \hspace*{50pt} \leq \
       (1/2)(N + 1)(N + 2) \,\exp (- c_{10} \sqrt N). \end{array}
\end{equation}
 Using the coupling \eqref{eq:outer-survival-3} and inequalities \eqref{eq:outer-survival-1}, \eqref{eq:outer-survival-4}
 and \eqref{eq:outer-survival-5}, we deduce that
 $$ \begin{array}{l}
      P \,(\xi_t = \0 \ \hbox{for some} \ t > 0 \ | \ \xi_0 (0) = N) \ \leq \
      P \,(Z_n = \varnothing \ \hbox{for some} \ n \ | \ Z_N = \{- N, \ldots, N \}) \vspace*{4pt} \\ \hspace*{50pt} + \
      P \,(Z_N \neq \{- N, \ldots, N \} \ | \ 0 \in Z_0) +
      P \,(0 \notin Z_0 \ | \ \xi_0 (0) = N) \vspace*{4pt} \\ \hspace*{25pt} \leq \
      P \,(W_n = \varnothing \ \hbox{for some} \ n \ | \ Z_N = \{- N, \ldots, N \}) \vspace*{4pt} \\ \hspace*{50pt} + \
      P \,(W_N \neq \{- N, \ldots, N \} \ | \ W_0 = \{0 \}) + \exp (- c_9 \sqrt N) \vspace*{4pt} \\ \hspace*{25pt} \leq \
        \exp (- c_{12} N) + (1/2)(N + 1)(N + 2) \,\exp (- c_{10} \sqrt N) + \exp (- c_9 \sqrt N) \ \leq \ \exp (- c \sqrt N) \end{array} $$
 for a suitable $c > 0$ and $N$ sufficiently large.
\end{proof}


\section{Proof of Theorem \ref{th:survival} under assumption A2}
\label{sec:inner-survival}

\indent The proof of the second part of the theorem relies on similar arguments but is shorter partly because first and subsequent
 invasions can be dealt with using the same proof.
 The proof is now divided into two steps, the first one focusing on patch $x$ and the second one on invasion $x \to y$. \vspace*{4pt} \\
\noindent {\bf Step 1} -- Even in the absence of migrations, starting with the population in patch $x$ above a certain threshold,
 the population quickly increases up to a larger threshold and then stays above the lower threshold for a long time.
 This step is similar to steps 1 and 2 in the previous section but focuses on the local population in one patch instead of the global
 population in two patches. \vspace*{4pt} \\
\noindent {\bf Step 2} -- In the presence of migrations and as long as the population in patch $x$ is above the lower threshold,
 the population in patch $y$ increases quickly up to the upper threshold and then stays above the lower threshold for an arbitrarily
 long time. \vspace*{4pt} \\
 The second part of the theorem is again obtained by combining these two steps with a block construction in order to compare the
 process properly rescaled in space and time with oriented percolation.
 Recall that $Q \,(X) = a X^2 (1 - X) - X$ with $a > 4$ and denote by
 $$ c_- \ := \ c_- (a) \ = \ \frac{1}{2} - \sqrt{\frac{1}{4} - \frac{1}{a}} \qquad \hbox{and} \qquad
    c_+ \ := \ c_+ (a) \ = \ \frac{1}{2} + \sqrt{\frac{1}{4} - \frac{1}{a}} $$
 its two nontrivial roots.
 Step~1 is established in Lemmas \ref{lem:inner-fast}--\ref{lem:inner-stay} below.
 Both of these lemmas rely on the next preliminary result which can be seen as the analog of Lemma \ref{lem:outer-supermartingale}.


\begin{lemma} --
\label{lem:inner-supermartingale}
 Let $a > 4$.
 There exists $c_{13} > 0$ such that
 $$ \begin{array}{l} \lim_{\,h \to 0} \,h^{-1} \,E \,(Z_{t + h} - Z_t \ | \ X_t \in c_+ N + (- 4 \sqrt N, - \sqrt N)) \ \leq \ 0 \quad \hbox{for all $N$ large} \end{array} $$
 where $Z_t := \exp \,(- c_{13} X_t)$.
\end{lemma}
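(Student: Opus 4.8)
\noindent The plan is to mirror the proof of Lemma~\ref{lem:outer-supermartingale}, with the two‑patch drift estimate of Lemma~\ref{lem:outer-sum} replaced by a one‑patch estimate valid on the boundary layer $c_+ N + (-4\sqrt N, -\sqrt N)$ around the stable fixed point of \eqref{eq:mean-field}. Since, according to \eqref{eq:macro-model}, the only effect of migrations on $X_t = \xi_t(x)$ is to create extra individuals in patch $x$, the up‑rate of $X_t$ under the full dynamics is at least the inner birth rate $r(i) := \tfrac{a}{N(N-1)}\,i(i-1)(N-i)$, while its down‑rate equals $i$; because the factor $e^{-c}-1$ multiplying this up‑rate is negative, one gets
$$ \lim_{\,h \to 0} \,h^{-1} \,E \,(e^{- c X_{t+h}} - e^{- c X_t} \ | \ X_t = i) \ \leq \ e^{-ci} \,\phi_i (c), \qquad \phi_i (c) \ := \ r(i)(e^{-c} - 1) + i \,(e^{c} - 1). $$
So it suffices to produce $c_{13} > 0$ with $\phi_i(c_{13}) \leq 0$ for every $i$ in the window and all $N$ large --- exactly the quantity controlled in Lemma~\ref{lem:outer-supermartingale}, but now for a single patch.

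\noindent The core of the argument is the drift estimate. A routine expansion gives $r(i) - i = N\,Q(i/N) + O(1)$, uniformly for $i/N$ in any compact subset of $(0,1)$, where $Q(X) = aX^2(1-X) - X$. Writing $i = c_+ N - s$ with $s \in (\sqrt N, 4\sqrt N)$ and Taylor expanding $Q$ about its root $c_+$ (using $Q(c_+) = 0$),
$$ r(i) - i \ = \ N \,Q(c_+ - s/N) + O(1) \ = \ - s \,Q'(c_+) + O(1) \ \geq \ |Q'(c_+)| \,\sqrt N + O(1). $$
Since $a > 4$, the root $c_+$ is simple and stable, so $Q'(c_+) = 2 - a c_+ < 0$ is bounded away from $0$; hence there is $c_{14} > 0$ such that $r(i) - i \geq c_{14}\sqrt N$ throughout the window, for $N$ large. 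Qualitatively, the drift is restoring because $c_+ N$ attracts the deterministic flow, and the $\sqrt N$ is the width of the layer.

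\noindent To conclude, note $\phi_i(0) = 0$ and $\phi_i'(0) = i - r(i) \leq - c_{14}\sqrt N < 0$, while $\phi_i''(c) = r(i)e^{-c} + i\,e^{c}$ is bounded by a constant times $N$ uniformly over $c \in [0,1]$ and over $i$ in the window; a second‑order Taylor expansion in $c$ then gives $\phi_i(c_{13}) \leq 0$ for all such $i$ and all $N$ large, as soon as $c_{13}$ is small enough, which together with the first display proves the lemma. Equivalently and explicitly, $\phi_i(c) \leq 0$ if and only if $c \leq \log(r(i)/i)$, and on the window $r(i)/i = a\,(i/N)(1-i/N)(1 + O(1/N)) = 1 + \Theta(N^{-1/2})$ because $a\,c_+(1-c_+) = 1$. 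The one point requiring care --- and the genuine difference from Lemma~\ref{lem:outer-supermartingale} --- is that near the stable fixed point the drift is only of order $\sqrt N$ rather than order $N$, so $c_{13}$ must be taken of order $N^{-1/2}$ and cannot be a fixed constant; I would track this so that the subsequent Lemmas~\ref{lem:inner-fast}--\ref{lem:inner-stay} apply $Z_t = e^{-c_{13}X_t}$ only over excursions of $X_t$ of length of order $N$, keeping $e^{-c_{13}\,\Theta(N)} = e^{-\Theta(\sqrt N)}$ and hence the stretched‑exponential bounds of Theorem~\ref{th:survival}. Everything else (the expansion $r(i) - i = N Q(i/N) + O(1)$ and the bound on $\phi_i''$) is routine.
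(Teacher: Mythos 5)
Your proposal follows essentially the same route as the paper's own proof: the drift bound $r(i)-i = N\,Q(i/N)+O(1) \geq c\,\sqrt N$ on the window, obtained by Taylor expanding $Q$ about $c_+$ and using $Q'(c_+)<0$, is exactly the paper's inequality \eqref{eq:inner-1}, and the passage from the drift to the exponential supermartingale via $\phi_i(0)=0$, $\phi_i'(0)\leq -c\,\sqrt N$ is the paper's argument as well (your reduction to the worst case of empty neighbouring patches, using that migration only raises the up-rate and multiplies the negative factor $e^{-c}-1$, is a point the paper leaves implicit). The one place you depart from the paper is your closing caveat, and you are right about it: since the jump rates on the window are of order $N$ while the drift is only of order $\sqrt N$, the supermartingale condition is $c \leq \log(r(i)/i) = \Theta(N^{-1/2})$ (your computation via $a\,c_+(1-c_+)=1$ is correct), so the constant $c_{13}$ cannot be chosen independently of $N$. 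The paper's proof does not address this uniformity at all --- it simply asserts the existence of $c_{13}>0$ as in Lemma~\ref{lem:outer-supermartingale}, where the drift and the rates are both of order $N$ and a fixed constant genuinely works --- and the later estimates \eqref{eq:inner-fast-3} and \eqref{eq:inner-stay-1} treat $\exp(-c_{13}\sqrt N)$ as stretched-exponentially small, which is only legitimate for a fixed $c_{13}$. So your observation is not a defect of your own argument but a gap in the paper's: with the correct scaling $c_{13}\asymp N^{-1/2}$, exit probabilities across windows of width $\Theta(\sqrt N)$ around $c_+N$ are only bounded by constants, and some repair along the lines you sketch (working over excursions of macroscopic size $\Theta(N)$, i.e.\ widening the windows, rather than $\Theta(\sqrt N)$) is indeed needed for Lemmas~\ref{lem:inner-fast}--\ref{lem:inner-stay} to deliver the bounds claimed in Theorem~\ref{th:survival} under (A2).
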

\begin{proof}
 Since $Q' (c_+) < 0$, for all $i \in c_+ N + (- 4 \sqrt N, - \sqrt N)$,
\begin{equation}
\label{eq:inner-1}
  \begin{array}{l}
    \lim_{\,h \to 0} \,h^{-1} \,E \,(X_{t + h} - X_t \ | \ X_t = i) \ \geq \
     N^{-2} \,a \,i^2 \,(N - i) - i + O (1) \vspace*{4pt} \\ \hspace*{25pt} \geq \
     N \,\min_{\,c \,\in (1, 4)} \ (a \,(c_+ - c / \sqrt N)^2 (1 - c_+ + c / \sqrt N) - (c_+ - c / \sqrt N)) + O (1) \vspace*{4pt} \\ \hspace*{25pt} = \
     N \,\min_{\,c \,\in (1, 4)} \ Q (c_+ - c / \sqrt N) + O (1) \vspace*{4pt} \\ \hspace*{25pt} = \
     N \,\min_{\,c \,\in (1, 4)} \ (Q (c_+) - (c / \sqrt N) \,Q' (c_+)) + O (1) \vspace*{4pt} \\ \hspace*{25pt} \geq \
     - \sqrt N \ Q' (c_+) + O (1) \ \geq \ - (1/2) \sqrt N \ Q' (c_+) \ > \ 0 \end{array}
\end{equation}
 for all $N$ sufficiently large.
 To complete the proof of the lemma, we repeat the same argument as in the proof of Lemma~\ref{lem:outer-supermartingale}, namely we define the functions
 $$ \begin{array}{l}
     \phi_i (c) \ := \ \lim_{\,h \to 0} \,h^{-1} \,E \,(\exp \,(- c \,X_{t + h}) - \exp \,(- c \,X_t) \ | \ X_t = i) \end{array} $$
 and notice that, according to \eqref{eq:inner-1}, the derivative satisfies
 $$ \begin{array}{rcl}
     \phi_i' (c) & = & \lim_{\,h \to 0} \,h^{-1} \,E \,(- X_{t + h} \,\exp \,(- c \,X_{t + h}) + X_t \,\exp \,(- c \,X_t) \ | \ X_t = i) \vspace*{4pt} \\
     \phi_i' (0) & \leq & - c_{14} \ := \ (1/2) \sqrt N \ Q' (c_+) \quad \hbox{for all} \quad i \in c_+ N + (- 4 \sqrt N, - \sqrt N). \end{array} $$
 Using also that $\phi_i (0) = 0$, we get the existence of $c_{13} > 0$ such that
 $$ \begin{array}{l}
     \phi_i (c_{13}) \ = \ \lim_{\,h \to 0} \,h^{-1} \,E \,(\exp \,(- c_{13} X_{t + h}) - \exp \,(- c_{13} X_t) \ | \ X_t = i) \ \leq \ 0 \end{array} $$
 for all $i \in c_+ N + (- 4 \sqrt N, - \sqrt N)$.
 This completes the proof.
\end{proof} \\ \\
 To state our next two lemmas, we introduce, for all $\rho > 0$, the stopping times
 $$ \tau_{\rho}^+ \ := \ \inf \,\{t : X_t > c_+ N - \rho \,\sqrt N \} \quad \hbox{and} \quad
    \tau_{\rho}^- \ := \ \inf \,\{t : X_t < c_+ N - \rho \,\sqrt N \}. $$
\begin{lemma} --
\label{lem:inner-fast}
 Let $a > 4$.
 There exists $c_{15} > 0$ such that
 $$ P \,(\tau_2^+ > N \ | \ X_0 > c_+ N - 3 \sqrt N) \ \leq \ \exp (- c_{15} \sqrt N) \quad \hbox{for all $N$ large}. $$
\end{lemma}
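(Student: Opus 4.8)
The plan is to run a one–patch version of the proof of Lemma~\ref{lem:outer-fast}, with $X_t$ playing the role of the two–patch sum $X_t + Y_t$ and the band $c_+ N + (- 4 \sqrt N, - \sqrt N)$ around the stable fixed point $c_+ N$ playing the role of $\Omega_- \cup \Omega_+$. Set $\sigma := \min (\tau_4^-, \tau_2^+)$; since the state space $\{0, 1, \ldots, N\}$ is finite, $\sigma$ is almost surely finite, and before time $\sigma$ the process $X_t$ stays in $c_+ N + (- 4 \sqrt N, - \sqrt N)$, so Lemma~\ref{lem:inner-supermartingale} implies that $Z_t = \exp (- c_{13} X_t)$ stopped at $\sigma$ is a supermartingale.

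The first thing to control is the probability that the local population drifts back down out of the band before reaching the higher threshold $c_+ N - 2 \sqrt N$. Splitting $E \,(Z_\sigma)$ according to whether $\tau_4^- < \tau_2^+$ or not, and using $X_{\tau_4^-} < c_+ N - 4 \sqrt N$ together with the crude bound $X_{\tau_2^+} \leq N$, one gets
$$ E \,(Z_\sigma) \ \geq \ \exp (- c_{13} N) + (\exp (- c_{13} (c_+ N - 4 \sqrt N)) - \exp (- c_{13} N)) \ P \,(\tau_4^- < \tau_2^+). $$
On the other hand, for any admissible initial value $X_0 = i$ with $c_+ N - 3 \sqrt N < i \leq c_+ N - 2 \sqrt N$ --- the case $i > c_+ N - 2 \sqrt N$ being trivial since then $\tau_2^+ = 0$ --- the optimal stopping theorem gives $E \,(Z_\sigma \,| \,X_0 = i) \leq \exp (- c_{13} i) \leq \exp (- c_{13} (c_+ N - 3 \sqrt N))$, hence the same bound for $E \,(Z_\sigma \,| \,X_0 > c_+ N - 3 \sqrt N)$ after averaging. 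Combining the two estimates exactly as in~\eqref{eq:outer-fast-3} then yields
$$ P \,(\tau_4^- < \tau_2^+ \ | \ X_0 > c_+ N - 3 \sqrt N) \ \leq \ \exp (- c_{13} \sqrt N). $$

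The second ingredient is the strictly positive drift inside the band. On the event $\{\tau_2^+ < \tau_4^-\}$ the process $X_t$ stays in $c_+ N + (- 4 \sqrt N, - \sqrt N)$ up to time $\tau_2^+$, where according to~\eqref{eq:inner-1} its drift is bounded below by $- (1/2) \sqrt N \, Q' (c_+) > 0$ while the total jump rate is of order $N$. Since $X_t$ only has to climb a distance of order $\sqrt N$ to reach $c_+ N - 2 \sqrt N$, whereas the drift accumulated over the interval $(0, N)$ would be of order $N^{3/2}$, standard large deviation estimates for the Poisson distribution --- applied to the martingale part of $X_t$, whose predictable quadratic variation over $(0, N)$ is of order $N^2$ --- give
$$ P \,(\tau_2^+ > N \ | \ \tau_2^+ < \tau_4^- \ \hbox{and} \ X_0 > c_+ N - 3 \sqrt N) \ \leq \ \exp (- c_{17} N) $$
for some $c_{17} > 0$ and all $N$ large. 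Conditioning on whether $\tau_4^- < \tau_2^+$ and adding the two bounds then gives $P \,(\tau_2^+ > N \,| \,X_0 > c_+ N - 3 \sqrt N) \leq \exp (- c_{13} \sqrt N) + \exp (- c_{17} N) \leq \exp (- c_{15} \sqrt N)$ for a suitable $c_{15} > 0$, which is the assertion of the lemma.

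I expect the delicate point to be this last drift step: one must use that the lower bound on the drift of $X_t$ in~\eqref{eq:inner-1} is uniform over the whole band, and keep in mind that $X_t$ is bounded by $N$, so the large deviation estimate has to be applied to the martingale increment of $X_t$ rather than to $X_t$ itself. The down–crossing step, by contrast, is a routine consequence of Lemma~\ref{lem:inner-supermartingale} and the optimal stopping theorem, just as in Lemma~\ref{lem:outer-fast}.
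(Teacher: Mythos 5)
Your proposal is correct and follows essentially the same route as the paper: the supermartingale $Z_t = \exp(-c_{13} X_t)$ from Lemma~\ref{lem:inner-supermartingale} combined with the optimal stopping theorem at $\sigma = \min(\tau_2^+, \tau_4^-)$ to bound $P(\tau_4^- < \tau_2^+)$ by $\exp(-c_{13}\sqrt N)$ (up to an irrelevant constant), and then the uniform drift bound \eqref{eq:inner-1} with Poisson/Binomial large deviation estimates to show $\tau_2^+ \leq N$ with probability $1 - \exp(-cN)$ on the complementary event. Your handling of the overshoot at $\tau_2^+$ via the crude bound $X_{\tau_2^+} \leq N$ and the explicit averaging over the initial state are minor refinements of the same argument.
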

\begin{proof}
 Introducing the stopping time $\sigma := \min (\tau_2^+, \tau_4^-)$, we have
\begin{equation}
\label{eq:inner-fast-1}
  \begin{array}{rcl}
    E \,(Z_{\sigma}) & = &
    E \,(Z_{\tau_2^+}) \,P \,(\tau_2^+ < \tau_4^-) + E \,(Z_{\tau_4^-}) \,P \,(\tau_2^+ > \tau_4^-) \vspace*{4pt} \\ & = &
      \exp (- c_{13} \,(c_+ N - 2 \sqrt N)) \,P \,(\tau_2^+ < \tau_4^-) \vspace*{4pt} \\ && \hspace*{10pt} + \
      \exp (- c_{13} N (c_+ - 4 / \sqrt N)) \,P \,(\tau_2^+ > \tau_4^-) \vspace*{4pt} \\ & = &
      \exp (- c_{13} \,(c_+ N - 2 \sqrt N)) \vspace*{4pt} \\ && \hspace*{10pt} + \
     (\exp (- c_{13} \,(c_+ N - 4 \sqrt N)) - \exp (- c_{13} \,(c_+ N - 2 \sqrt N))) \,P \,(\tau_2^+ > \tau_4^-). \end{array}
\end{equation}
 Since time $\sigma$ is almost surely finite and the process $(Z_t)$ stopped at time $\sigma$ is a supermartingale
 according to Lemma~\ref{lem:inner-supermartingale}, the optimal stopping theorem implies that
\begin{equation}
\label{eq:inner-fast-2}
\begin{array}{l}
  E \,(Z_{\sigma} \,| \,X_0 > c_+ N - 3 \sqrt N) \ \leq \
  E \,(Z_{\sigma} \,| \,X_0 = c_+ N - 3 \sqrt N) \vspace*{4pt} \\ \hspace*{50pt} \leq \
  E \,(Z_0 \,| \,X_0 = c_+ N - 3 \sqrt N) \ = \ \exp (- c_{13} \,(c_+ N - 3 \sqrt N)). \end{array}
\end{equation}
 Combining \eqref{eq:inner-fast-1}--\eqref{eq:inner-fast-2}, we get
\begin{equation}
\label{eq:inner-fast-3}
\begin{array}{rcl}
  P \,(\tau_2^+ > \tau_4^- \,| \,X_0 > c_+ N - 3 \sqrt N) & \leq &
      (\exp (c_{13} \sqrt N) - 1)(\exp (2 \,c_{13} \sqrt N) - 1)^{-1} \vspace*{4pt} \\ & \leq &
       \exp (- c_{13} \sqrt N). \end{array}
\end{equation}
 Moreover, by definition of the stopping time $\sigma$,
 $$ X_t \in c_+ N + (- 4 \sqrt N, - 2 \sqrt N) \quad \hbox{for all} \quad t < \sigma $$
 therefore inequality \eqref{eq:inner-1} in the previous lemma and large deviation estimates for the Binomial and the
 Poisson distributions also imply that
\begin{equation}
\label{eq:inner-fast-4}
  P \,(\tau_2^+ > N \ | \ \tau_2^+ < \tau_4^- \ \hbox{and} \ X_0 > c_+ N - 3 \sqrt N) \ \leq \ \exp (- c_{16} N)
\end{equation}
 for some $c_{16} > 0$ and all $N$ large.
 Combining \eqref{eq:inner-fast-3}--\eqref{eq:inner-fast-4}, we conclude
 $$ \begin{array}{l}
     P \,(\tau_2^+ > N \ | \ X_0 > c_+ N - 3 \sqrt N) \vspace*{4pt} \\ \hspace*{25pt} \leq \
     P \,(\tau_2^+ > \tau_4^- \ | \ X_0 > c_+ N - 3 \sqrt N) + P \,(\tau_2^+ > N \ | \ \tau_2^+ < \tau_4^- \ \hbox{and} \ X_0 > c_+ N - 3 \sqrt N) \vspace*{4pt} \\ \hspace*{25pt} \leq \
       \exp (- c_{13} \sqrt N) + \exp (- c_{16} N) \ \leq \ \exp (- c_{15} \sqrt N) \end{array} $$
 for some $c_{15} > 0$.
 This completes the proof.
\end{proof}
\begin{lemma} --
\label{lem:inner-stay}
 Let $a > 4$.
 There exists $c_{17} > 0$ such that
 $$ P \,(\tau_3^- \leq 4 N \ | \ X_0 > c_+ N - 2 \sqrt N) \ \leq \ \exp (- c_{17} \sqrt N) \quad \hbox{for all $N$ large}. $$
\end{lemma}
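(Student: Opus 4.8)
The plan is to transpose the two-patch argument of Lemma~\ref{lem:outer-stay} to the single coordinate $X_t = \xi_t(x)$; since there is only one population to follow, no analogue of the difference estimate in Lemma~\ref{lem:outer-difference} is needed, and the required drift bound is already at hand. Indeed, inequality~\eqref{eq:inner-1} inside the proof of Lemma~\ref{lem:inner-supermartingale} shows that $\lim_{h \to 0} h^{-1} E(X_{t+h} - X_t \mid X_t = i) \ge - \tfrac12 \sqrt N \, Q'(c_+) > 0$ for every $i \in c_+ N + (-4\sqrt N, -\sqrt N)$, so on the band $c_+ N + (-3\sqrt N, -\sqrt N)$ the process $X_t$ is pushed upward with a drift bounded below by a positive multiple of $\sqrt N$.

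First I would set $D_t := \big((c_+ N - \sqrt N) - X_t\big)^+$, the shortfall of $X_t$ below the upper threshold $c_+ N - \sqrt N$, so that $D_t = 0$ once $X_t \ge c_+ N - \sqrt N$ and, while $X_t$ remains in the band, $D_t$ decreases with a drift bounded away from zero. Exactly the exponential-tilting argument of Lemmas~\ref{lem:outer-supermartingale} and~\ref{lem:inner-supermartingale} — from $\phi_i(0) = 0$ and $\phi_i'(0)$ bounded away from zero one produces $c_{18} > 0$ with $\phi_i(c_{18}) \le 0$ — then shows that $\exp(c_{18} D_t)$ is a supermartingale as long as $X_t$ stays in the band. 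Setting $\sigma := \min(\tau_3^-, \tau_1^+)$, which is almost surely finite, I would apply the optimal stopping theorem to $\exp(c_{18} D_{t \wedge \sigma})$ started from $X_0 > c_+ N - 2\sqrt N$ (the extreme case being $X_0 \downarrow c_+ N - 2\sqrt N$, where $D_0 = \sqrt N$): since $D = 2\sqrt N$ on $\{X = c_+ N - 3\sqrt N\}$ while $D = 0$ on $\{X \ge c_+ N - \sqrt N\}$, the computation of~\eqref{eq:outer-fast-3}--\eqref{eq:outer-stay-2} gives
$$ P\,(\tau_3^- < \tau_1^+ \mid X_0 > c_+ N - 2\sqrt N) \ \le \ \exp(- c_{18} \sqrt N). $$

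Finally I would close the proof with the upcrossing bookkeeping of Lemma~\ref{lem:outer-stay}: the last display says that the number of upcrossings of $X_t$ across $(c_+ N - 2\sqrt N, c_+ N - \sqrt N)$ completed before time $\tau_3^-$ stochastically dominates a geometric random variable with success parameter $\exp(- c_{18} \sqrt N)$, hence exceeds $N^2$ outside an event of probability at most $\exp(- c_{19} \sqrt N)$; since each such upcrossing of a width-$\sqrt N$ interval by a process with $\pm 1$ steps uses at least $\sqrt N$ jumps, whereas the total jump rate of $X_t$ is $O(N)$ so that the number of jumps performed by time $4N$ exceeds $N^{5/2}$ only with probability $\exp(- c_{20} N^{5/2})$, a standard Poisson large-deviation estimate yields $P(\tau_3^- \le 4N \mid X_0 > c_+ N - 2\sqrt N) \le \exp(- c_{17} \sqrt N)$ for all $N$ large. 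The delicate point — the one that makes this lemma genuinely harder than its two-patch counterpart — is that near the stable fixed point $c_+$ the drift of $X_t$ is only of order $\sqrt N$ rather than of order $N$ as it was on $\Omega_\pm$ in Section~\ref{sec:outer-survival}, so the supermartingale exponent $c_{18}$, and hence the large-deviation bounds it produces, are correspondingly weaker; keeping the width of the band, the number of upcrossings one must force, and the time horizon $4N$ in the right balance so that every estimate still closes is where the care is needed.
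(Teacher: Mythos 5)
Your proposal is essentially the paper's own proof: the paper bounds $P\,(\tau_1^+ > \tau_3^- \,|\, X_0 > c_+ N - 2\sqrt N)$ by applying the optimal stopping theorem to the tilted supermartingale $Z_t = \exp(-c_{13} X_t)$ of Lemma~\ref{lem:inner-supermartingale} at the stopping time $\min(\tau_1^+, \tau_3^-)$ --- which is, up to a constant factor, exactly your $\exp(c_{18} D_t)$ with $D_t = ((c_+ N - \sqrt N) - X_t)^+$ --- and then concludes with the same upcrossing count across $c_+ N + (-2\sqrt N, -\sqrt N)$, the geometric domination, and the comparison of the $N^{5/2}$ jumps needed for $N^2$ upcrossings with the $O(N^2)$ jumps available by time $4N$ via Poisson large deviations. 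The ``delicate point'' you flag at the end (the drift near $c_+$ being only of order $\sqrt N$ while the jump rates are of order $N$, so the tilting exponent produced by the argument is correspondingly small) is left just as unexamined in the paper, which simply reuses the constant $c_{13}$ from Lemma~\ref{lem:inner-supermartingale}, so your write-up is faithful to the published argument rather than deficient relative to it.
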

\begin{proof}
 Repeating the proof of Lemma~\ref{lem:inner-fast} but using the stopping time $\min (\tau_1^+, \tau_3^-)$ instead of the stopping
 time $\sigma$, it can be proved that
\begin{equation}
\label{eq:inner-stay-1}
  P \,(\tau_1^+ > \tau_3^- \,| \,X_0 > c_+ N - 2 \sqrt N) \ \leq \ \exp (- c_{13} \sqrt N) \quad \hbox{for all $N$ large},
\end{equation}
 which is the analog of \eqref{eq:inner-fast-3} above.
 To conclude, we let
 $$ u (\tau_3^-) \ := \ \# \ \hbox{upcrossings of $X_t$ across the interval $c_+ N + (- 2 \sqrt N, - \sqrt N)$ by time $\tau_3^-$} $$
 and follow the same idea as in the proof of Lemma~\ref{lem:outer-stay}.
 It follows from \eqref{eq:inner-stay-1} that the number of upcrossings is stochastically larger than a geometric random variable
 whose success probability tends to zero: there exists a constant $c_{18} > 0$ such that
 $$ \begin{array}{rcl}
      P \,(u (\tau_3^-) \leq N^2) & = & 1 - P \,(u (\tau_3^-) > N^2) \ \leq \ 1 - (1 - \exp (- c_{13} \sqrt N))^{N^2} \vspace*{4pt} \\ & \leq &
      2 \,N^2 \exp (- c_{13} \sqrt N) \ \leq \ \exp (- c_{18} \sqrt N) \end{array} $$
 for all $N$ sufficiently large.
 Since in addition
\begin{enumerate}
 \item the number of jumps by time $4N$ is at most of the order of $N^2$ \vspace*{2pt}
 \item the number of jumps to perform more than $N^2$ upcrossings is at least $N^{5/2}$
\end{enumerate}
 the lemma follows from large deviation estimates for the Poisson distribution.
\end{proof} \\


\noindent Note that Lemma \ref{lem:inner-stay} applied to patch $y$ rather than patch $x$ already gives part of the second step of the proof.
 For all $\rho > 0$, we introduce the stopping times
 $$ \sigma_{\rho}^+ \ := \ \inf \,\{t : Y_t > c_+ N - \rho \,\sqrt N \} \quad \hbox{and} \quad
    \sigma_{\rho}^- \ := \ \inf \,\{t : Y_t < c_+ N - \rho \,\sqrt N \}. $$
 To complete the proof of step~2, it suffices to establish the following lemma.
\begin{lemma} --
\label{lem:inner-drift}
 Let $a > 4$ and $b > 2 a^3 \,c_-^4$.
 There exists $c_{19} > 0$ such that
 $$ P \,(\sigma_2^+ > 4N \ | \ X_t > c_+ N - 3 \sqrt N \ \hbox{for all} \ t \in (N, 4N)) \ \leq \ \exp (- c_{19} N) \quad \hbox{for all $N$ large}. $$
\end{lemma}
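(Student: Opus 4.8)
\noindent \emph{Proof strategy.} The plan is to show that, on the conditioning event $A := \{X_t > c_+ N - 3 \sqrt N$ for all $t \in (N, 4N)\}$, the component $Y_t$ has an upward drift of order $N$ for as long as it has not yet reached the level $c_+ N - 2 \sqrt N$, and then to turn this drift into the claimed exponential bound by the same Poisson large deviation argument as in the derivation of \eqref{eq:inner-fast-4}. First I would compute the drift of $Y_t$ from the generator \eqref{eq:macro-model}. Writing $(X_t, Y_t) = (i, j)$ and recalling that $M = 1$, so that patch $y$ receives offspring from $x$ (at a rate involving $i (i - 1)$) and from its other neighbor (a nonnegative contribution), one gets
$$ \lim_{h \to 0} h^{-1} E (Y_{t + h} - Y_t \mid (X_t, Y_t) = (i, j)) \ \geq \ \frac{a}{N (N - 1)} \, j (j - 1)(N - j) + \frac{b}{2 N (N - 1)} \, i (i - 1)(N - j) - j. $$
Substituting $i \geq c_+ N - 3 \sqrt N$ and $j = c N$ with $c \in [0, c_+)$, the right-hand side equals $N \, g (c) + O (\sqrt N)$, uniformly in $j \in \{0, \ldots, \lfloor c_+ N - 2 \sqrt N \rfloor\}$, where $g (c) := a c^2 (1 - c) + (b/2) \, c_+^2 (1 - c) - c = Q (c) + (b/2) \, c_+^2 (1 - c)$ and $Q$ is the mean-field polynomial of \eqref{eq:mean-field}. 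The crux is then that the hypothesis $b > 2 a^3 c_-^4$ is exactly what makes $g$ strictly positive on all of $[0, c_+]$.

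This positivity is the main step and is where the precise form of the hypothesis is used. On $[c_-, c_+]$ it is immediate: there $Q \geq 0$ by the analysis of \eqref{eq:mean-field}, while $(b/2) \, c_+^2 (1 - c) \geq (b/2) \, c_+^2 c_- > 0$, using $1 - c_+ = c_-$. On $(0, c_-)$ one has $- Q (c) = c - a c^2 (1 - c) \leq c \leq c_-$, hence $Q (c) \geq - c_-$, whereas $(b/2) \, c_+^2 (1 - c) \geq (b/2) \, c_+^2 (1 - c_-) = (b/2) \, c_+^3$, so it suffices that $(b/2) \, c_+^3 > c_-$. Finally, the two nontrivial roots satisfy $c_- + c_+ = 1$ and $c_- c_+ = 1/a$, hence $c_+ = 1/(a c_-)$ and $2 c_- / c_+^3 = 2 a^3 c_-^4$, so the required inequality is precisely $b > 2 a^3 c_-^4$. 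By continuity, $g$ is then bounded below by some $\delta > 0$ on the compact interval $[0, c_+]$, and for $N$ large the drift of $Y_t$ is at least $(\delta / 2) N$ whenever $(X_t, Y_t) = (i, j)$ with $i \geq c_+ N - 3 \sqrt N$ and $j < c_+ N - 2 \sqrt N$.

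It then remains to conclude. On $A$, for every $t \in (N, 4 N)$ with $Y_t < c_+ N - 2 \sqrt N$, the above drift bound holds pathwise; since $A$ is an increasing event for the attractive dynamics (a larger configuration keeps $X$ higher), conditioning on $A$ only raises the birth rates and lowers the death rate of $Y$, so the bound survives the conditioning. Alternatively, and perhaps more cleanly, on $A$ one may use the graphical construction to dominate $Y$ from below by the autonomous birth-and-death chain obtained by freezing patch $x$ at population $\lceil c_+ N - 3 \sqrt N \rceil$ and discarding the migration from the other neighbor; this chain has the same drift lower bound and reaches $c_+ N - 2 \sqrt N$ no later than $Y$ does. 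Either way, if $\sigma_2^+ > 4 N$ then $Y_t < c_+ N - 2 \sqrt N$ throughout $[N, 4 N]$, so over a time interval of length $3 N$ the process $Y$ accumulates a per-unit-time drift of at least $(\delta / 2) N$ while remaining below $N$; exactly as in the derivation of \eqref{eq:inner-fast-4} — using that the number of jumps in bounded time is $O (N^2)$ together with large deviation estimates for the Poisson distribution — this event has probability at most $\exp (- c_{19} N)$ for a suitable $c_{19} > 0$ and all $N$ large. Note the exponent is linear in $N$, rather than in $\sqrt N$ as in Lemmas \ref{lem:inner-fast} and \ref{lem:inner-stay}, precisely because here the relevant drift is of order $N$. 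The only genuinely delicate point in the whole argument is justifying that the drift estimate is not destroyed by the conditioning on $A$, which is the reason I would favor the domination route over the Doob-transform route.
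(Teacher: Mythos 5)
Your proof is correct, and it follows the paper's overall strategy (a drift estimate for $Y_t$ valid while $Y_t < c_+ N - 2\sqrt N$ and $X_t > c_+ N - 3\sqrt N$, converted into an exponential bound by Poisson/Binomial large deviations), but the drift estimate itself is organized differently. The paper splits the range of $Y$ into two regimes: for $j < c_- N + 2\sqrt N$ it discards the within-patch births and uses only the migration term, which is where $b > 2a^3 c_-^4$ enters exactly as in your computation (namely as $(b/2)\,c_+^3 > c_-$, via $c_+ c_- = 1/a$ and $c_+ + c_- = 1$), and for $j \in (c_- N + 2\sqrt N,\, c_+ N - 2\sqrt N)$ it discards the migration term and uses only $Q > 0$ on $(c_-, c_+)$, which near the endpoints yields a drift of order only $\sqrt N$. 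You instead keep both terms everywhere and show the combined function $g(c) = Q(c) + (b/2)\,c_+^2 (1 - c)$ is bounded below by a positive constant on all of $[0, c_+]$ under the same hypothesis; this unifies the two cases and buys a uniform drift of order $N$, arguably making the large deviation step cleaner (and, as you note, explaining the exponent $N$ rather than $\sqrt N$). Two small remarks: the hypothesis is only shown (and only needs) to be sufficient for positivity of $g$, not "exactly" equivalent to it, since your bound $Q(c) \geq -c_-$ on $(0, c_-)$ is not sharp; and in the domination argument the inequality of hitting times is stated backwards --- the chain dominating $Y$ from below reaches the level no \emph{earlier} than $Y$, which is precisely what gives the inclusion $\{\sigma_2^+ > 4N\} \subset \{\hbox{the comparison chain has not reached the level by time } 4N\}$ that you use. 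Your extra care about conditioning on the path event is reasonable; the paper itself handles this point only implicitly, stating its drift bounds conditionally on the current state.
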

\begin{proof}
 Observing that $c_+ + c_- = 1$ and $c_+ \,c_- = a^{-1}$, we deduce
 $$ b \ > \ 2 a^3 \,c_-^4 \ = \ 2 \,(a c_-)^3 \,c_- \ = \ 2 c_+^{-3} \,c_- \ = \ 2 c_+^{-2} \,c_- \,(1 - c_-)^{-1}. $$
 In particular, for all $N$ large,
 $$ b \ > \ 2 \,(c_+ - 3 / \sqrt N)^{-2} \,(c_- + 2 / \sqrt N) (1 - c_- - 2 / \sqrt N)^{-1} $$
 from which it follows that, for all $j < c_- N + 2 \sqrt N$,
\begin{equation}
\label{eq:inner-drift-1}
  \begin{array}{l}
    \lim_{\,h \to 0} \,h^{-1} \,E \,(Y_{t + h} - Y_t \ | \ X_t > c_+ N - 3 \sqrt N \ \hbox{and} \ Y_t = j) \vspace*{4pt} \\ \hspace*{25pt} \geq \
     (b/2)(c_+ - 3 / \sqrt N)^2 \,(N - j) - j + O (1) \vspace*{4pt} \\ \hspace*{25pt} \geq \
     (b/2)(c_+ - 3 / \sqrt N)^2 \,(N - c_- N - 2 \sqrt N) - c_- N - 2 \sqrt N + O (1) \ \geq \ c_{20} \,N \end{array}
\end{equation}
 for some constant $c_{20} > 0$.
 In addition, for all $j \in (c_- N + 2 \sqrt N, c_+ N - 2 \sqrt N)$,
\begin{equation}
\label{eq:inner-drift-2}
  \begin{array}{l}
    \lim_{\,h \to 0} \,h^{-1} \,E \,(Y_{t + h} - Y_t \ | \ Y_t = j) \ \geq \
     N^{-2} \,a \,j^2 \,(N - j) - j + O (1) \vspace*{4pt} \\ \hspace*{25pt} \geq \
     N \min \,(Q (c_- + 2 / \sqrt N), Q (c_+ - 2 / \sqrt N)) + O (1) \vspace*{4pt} \\ \hspace*{25pt} = \
     N \min \ (Q (c_-) + (2 / \sqrt N) \,Q' (c_-), Q (c_+) - (2 / \sqrt N) \,Q' (c_+)) + O (1) \vspace*{4pt} \\ \hspace*{25pt} = \
     2 \sqrt N \ \min (Q' (c_-), - Q' (c_+)) + O (1) \ \geq \ \sqrt N \ \min (Q' (c_-), - Q' (c_+)) \ > \ 0 \end{array}
\end{equation}
 for all $N$ sufficiently large.
 Combining \eqref{eq:inner-drift-1}--\eqref{eq:inner-drift-2} together with large deviation estimates for the Binomial distribution and the
 Poisson distribution implies the result.
\end{proof} \\


\noindent The second part of the theorem is proved in the next lemma relying on Lemmas~\ref{lem:inner-fast}--\ref{lem:inner-drift}
 as well as a block construction to compare the process with oriented percolation.
\begin{lemma} --
\label{lem:inner-survival}
 Let $a > 4$ and $b > 2 a^3 \,c_-^4$.
 There exists $c > 0$ such that
 $$ P \,(\xi_t = \0 \ \hbox{for some} \ t > 0 \ | \ \xi_0 = \ind_x) \ \leq \ \exp (- c \sqrt N) \quad \hbox{for all $N$ sufficiently large}. $$
\end{lemma}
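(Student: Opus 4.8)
The plan is to mimic the block-construction argument of Lemma~\ref{lem:outer-survival} but with the roles of the two previous sections' key estimates played by Lemmas~\ref{lem:inner-fast}--\ref{lem:inner-drift}. First I would record the ``one-patch persistence'' fact: combining Lemma~\ref{lem:inner-fast} (the population in an isolated patch started above $c_+N-3\sqrt N$ reaches $c_+N-2\sqrt N$ within time $N$ except with probability $\exp(-c_{15}\sqrt N)$) with Lemma~\ref{lem:inner-stay} applied to that patch (once above $c_+N-2\sqrt N$ it stays above $c_+N-3\sqrt N$ for time $4N$ except with probability $\exp(-c_{17}\sqrt N)$), one gets that, starting from $X_0>c_+N-3\sqrt N$, the event $\{X_t>c_+N-3\sqrt N$ for all $t\in(N,4N)\}$ has probability at least $1-\exp(-c\sqrt N)$. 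Note these estimates only use the within-patch dynamics, so extra migration out of the patch only helps (attractiveness), and migration in certainly helps; this needs a one-line coupling remark.

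Next I would set up the percolation graph exactly as in Lemma~\ref{lem:outer-survival}: the directed lattice $\mathcal H$ on $H=\{(z,n):z+n\text{ even}\}$ with edges $(z,n)\to(z\pm1,n+1)$, and declare $(z,n)$ \emph{good} if $\xi_t(z)>c_+N-3\sqrt N$ for all $t\in((2n+2)N,(2n+4)N)$, with $Z_n$ the set of good $z$ at level $n$. The base case $P(0\notin Z_0\mid\xi_0(0)=N)\le\exp(-c\sqrt N)$ follows from the one-patch persistence fact above (started fully occupied, hence above $c_+N-3\sqrt N$). For the induction step I would show $P(z\pm1\notin Z_{n+1}\mid z\in Z_n)\le\exp(-c\sqrt N)$: given $z\in Z_n$, i.e. $X_t:=\xi_t(z)>c_+N-3\sqrt N$ throughout $((2n+2)N,(2n+4)N)$ (a window of length $2N$, so after a time-shift it is the event in Lemma~\ref{lem:inner-drift}), Lemma~\ref{lem:inner-drift} forces $Y_t:=\xi_t(z\pm1)$ above $c_+N-2\sqrt N$ at some time, then Lemma~\ref{lem:inner-stay} applied to $Y$ keeps it above $c_+N-3\sqrt N$ long enough to cover $((2n+4)N,(2n+6)N)$; union over the exceptional events gives the bound. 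Then I invoke Theorem~4.3 of \cite{durrett_1995} to couple $(Z_n)$ with a one-dependent oriented site percolation $(W_n)$ whose sites are closed with probability $\exp(-c\sqrt N)$, so that $W_n\subset Z_n$ whenever $W_0\subset Z_0$, exactly as in \eqref{eq:outer-survival-3}.

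The conclusion is then identical to the end of Lemma~\ref{lem:outer-survival}: for $N$ large enough that $1-\exp(-c\sqrt N)$ exceeds the critical probability for one-dependent oriented site percolation, $P(W_n=\varnothing$ for some $n\mid W_N=\{-N,\dots,N\})\le\exp(-c_{12}N)$ (Section~10 of \cite{durrett_1984}), and $P(W_N\neq\{-N,\dots,N\}\mid W_0=\{0\})\le(1/2)(N+1)(N+2)\exp(-c\sqrt N)$; combining these with the base-case estimate and the coupling yields $P(\xi_t=\0\text{ for some }t>0\mid\xi_0=\ind_x)\le\exp(-c\sqrt N)$ for a suitable $c>0$ and $N$ large, and since the hypothesis $b>2a^3c_-^4$ with $a>4$ is precisely (A2), this is the claim. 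The only place needing genuine care, rather than transcription, is verifying that the time-windows line up so that the ``good'' events at consecutive levels overlap in the way the induction needs (the $2N$-length persistence window must be long enough to absorb both the $\le N$ invasion time from Lemma~\ref{lem:inner-fast}/\ref{lem:inner-drift} and leave a full interval in which to declare the next site good); this is exactly the bookkeeping that made the constants $4N$ appear in Lemmas~\ref{lem:inner-stay} and~\ref{lem:inner-drift}, so it should go through, but it is the step I would write out most carefully.
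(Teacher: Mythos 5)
Your overall strategy is exactly the paper's: a block construction on the same directed graph $\mathcal H$, with the base case and induction step supplied by Lemmas~\ref{lem:inner-fast}, \ref{lem:inner-stay} and \ref{lem:inner-drift}, followed by the comparison with one-dependent oriented site percolation via Theorem~4.3 of \cite{durrett_1995} and the estimates \eqref{eq:outer-survival-4}--\eqref{eq:outer-survival-5}. The one place where your write-up differs from the paper is precisely the bookkeeping you flagged, and as written it does not line up: you carry over the Section~\ref{sec:outer-survival} good-site definition, namely $\xi_t(z) > c_+N-3\sqrt N$ for all $t\in((2n+2)N,(2n+4)N)$, so consecutive levels are only $2N$ apart, whereas Lemma~\ref{lem:inner-drift} conditions on the source patch being above $c_+N-3\sqrt N$ on an interval of length $3N$ (namely $(N,4N)$ after a time shift) and only guarantees $\sigma_2^+\le 4N$; a $2N$-spaced window of length $2N$ can supply neither the length-$3N$ conditioning event nor room for the invasion time $4N$ before the next level's window begins. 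The paper avoids this by allotting $4N$ of time per percolation level and declaring $(z,n)$ good by a condition at the single time $t=4nN$ (that $\xi_{4nN}(z)>c_+N-3\sqrt N$); Lemmas~\ref{lem:inner-fast} and \ref{lem:inner-stay} then upgrade this to persistence on $4nN+(N,4N)$, Lemma~\ref{lem:inner-drift} plus the strong Markov property at $\sigma_2^+$ together with Lemma~\ref{lem:inner-stay} give $\xi_{4(n+1)N}(z\pm1)>c_+N-3\sqrt N$, and the rest proceeds exactly as you describe. So your proof goes through once you replace the $2N$-spaced interval definition of good sites by the single-time definition with $4N$ per level (or re-prove Lemma~\ref{lem:inner-drift} with rescaled windows); everything else matches the paper's argument.
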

\begin{proof}
 This is similar to the proof of Lemma~\ref{lem:outer-survival}.
 Having the same directed graph $\mathcal H$ as in the previous section, we now declare site $(z, n) \in \mathcal H$ to be good whenever
 $$ \xi_t (z) \ > \ c_+ N - 3 \sqrt N \quad \hbox{at time} \quad t = 4n N $$
 and again define the set of good sites at level $n$ by setting
 $$ Z_n \ := \ \{z \in \Z : (z, n) \in H \ \hbox{is good} \}. $$
 First, we combine Lemmas~\ref{lem:inner-fast} and \ref{lem:inner-stay} to get
\begin{equation}
\label{eq:inner-survival-1}
  \begin{array}{l}
    P \,(\xi_t (z) \leq c_+ N - 3 \sqrt N \ \hbox{for some} \ t \in 4n N + (N, 4N) \ | \ z \in Z_n) \vspace*{4pt} \\ \hspace*{25pt} \leq \
    P \,(X_t \leq c_+ N - 3 \sqrt N \ \hbox{for some} \ t \in (N, 4N) \ | \ X_0 > c_+ N - 3 \sqrt N) \vspace*{4pt} \\ \hspace*{25pt} \leq \
    P \,(\tau_2^+ > N \ | \ X_0 > c_+ N - 3 \sqrt N) + P \,(\tau_3^- \leq 4 N \ | \ X_0 > c_+ N - 2 \sqrt N) \vspace*{4pt} \\ \hspace*{25pt} \leq \
      \exp (- c_{15} \sqrt N) + \exp (- c_{17} \sqrt N) \end{array}
\end{equation}
 for all $N$ large.
 Using Lemma~\ref{lem:inner-drift} and again Lemma~\ref{lem:inner-stay}, we also get
\begin{equation}
\label{eq:inner-survival-2}
 \begin{array}{l}
   P \,(Y_{4N} \leq c_+ N - 3 \sqrt N \ | \ X_t > c_+ N - 3 \sqrt N \ \hbox{for all} \ t \in (N, 4N)) \vspace*{4pt} \\ \hspace*{50pt} \leq \
   P \,(\sigma_2^+ > 4N \ | \ X_t > c_+ N - 3 \sqrt N \ \hbox{for all} \ t \in (N, 4N)) \vspace*{4pt} \\ \hspace*{80pt} + \
   P \,(Y_{4N} \leq c_+ N - 3 \sqrt N \ | \ \sigma_2^+ \leq 4N) \vspace*{4pt} \\ \hspace*{50pt} \leq \
     \exp (- c_{19} N) + P \,(\sigma_3^- \leq 4N \ | \ Y_0 > c_+ N - 2 \sqrt N) \vspace*{4pt} \\ \hspace*{50pt} \leq \
     \exp (- c_{19} N) + \exp (- c_{17} \sqrt N) \end{array}
\end{equation}
 for all $N$ large.
 Combining \eqref{eq:inner-survival-1}--\eqref{eq:inner-survival-2}, we deduce
\begin{equation}
\label{eq:inner-survival-3}
 \begin{array}{l}
   P \,(z \pm 1 \notin Z_{n + 1} \ | \ z \in Z_n) \vspace*{4pt} \\ \hspace*{25pt} \leq \
   P \,(\xi_t (z) \leq c_+ N - 3 \sqrt N \ \hbox{for some} \ t \in 4n N + (N, 4N) \ | \ z \in Z_n) \vspace*{4pt} \\ \hspace*{40pt} + \
   P \,(z \pm 1 \notin Z_{n + 1} \ \hbox{and} \ \xi_t (z) > c_+ N - 3 \sqrt N \ \hbox{for all} \ t \in 4n N + (N, 4N)) \vspace*{4pt} \\ \hspace*{25pt} \leq \
     \exp (- c_{15} \sqrt N) + \exp (- c_{17} \sqrt N) \vspace*{4pt} \\ \hspace*{40pt} + \
   P \,(Y_{4N} \leq c_+ N - 3 \sqrt N \ | \ X_t > c_+ N - 3 \sqrt N \ \hbox{for all} \ t \in (N, 4N)) \vspace*{4pt} \\ \hspace*{25pt} \leq \
     \exp (- c_{15} \sqrt N) + 2 \,\exp (- c_{17} \sqrt N) + \exp (- c_{19} N) \ \leq \ \exp (- c_{21} \sqrt N) \end{array}
\end{equation}
 for some constant $c_{21} > 0$ and all $N$ large.
 Repeating the same arguments as in the proof of Lemma~\ref{lem:outer-survival} but using inequality \eqref{eq:inner-survival-3}
 instead of inequality \eqref{eq:outer-survival-2}, we conclude that
 $$ \begin{array}{l}
      P \,(\xi_t \ \hbox{dies out} \ | \ \xi_0 (0) = N) \vspace*{4pt} \\ \hspace*{25pt} \leq \
        \exp (- c_{12} N) + (1/2)(N + 1)(N + 2) \,\exp (- c_{21} \sqrt N) \ \leq \ \exp (- c \sqrt N) \end{array} $$
 for a suitable $c > 0$ and all $N$ large.
\end{proof}


\section{Proof of Theorem \ref{th:range}}
\label{sec:range-extinction}

\indent This section is devoted to the proof of Theorem~\ref{th:range} which shows in particular that, for any choice of the
 parameters $a$, $b$ and $N$, the probability that the process starting with a single occupied patch survives tends to zero as
 the dispersal range tends to infinity.
 This together with Theorem~\ref{th:survival} suggests that dispersal promotes extinction of processes with sexual reproduction,
 whereas dispersal is known to promote survival of the basic contact process with no sexual reproduction.
 The proof relies on the following two key ingredients:
\begin{enumerate}
 \item In the absence of migrations: $b = 0$, the process starting with a single fully occupied patch goes extinct almost
  surely in a finite time.
  This directly follows from the fact that, in this case, the process is irreducible and has a finite state space. \vspace*{4pt}
 \item Calling a {\bf collision} the event that two offspring produced at the source patch $x$ are sent to the same patch
  $y \neq x$, in the presence of migrations: $b \neq 0$, but in the absence of collisions, offspring sent outside the source
  patch cannot reproduce due to the birth mechanism therefore, in view also of the previous point, the process dies out.
\end{enumerate}
 In particular, to find an upper bound for the survival probability, it suffices to find an upper bound for the probability
 of a collision since we have
\begin{equation}
\label{eq:range-1}
  P \,(\xi_t \neq \0 \ \hbox{for all} \ t > 0 \ | \ \xi_0 = \ind_x) \ \leq \ P \,(\hbox{collision} \ | \ \xi_0 = \ind_x).
\end{equation}
 In addition, since the probability of a collision is related to the number of individuals produced at patch~$x$ and sent
 outside the patch which, in turn, is related to the number of individuals at patch~$x$ in the absence of migrations, to find
 an upper bound for the probability of a collision, the first step is to find an upper bound for the time spent in state~$j$
 defined as
 $$ \tau_j \ := \int_0^{\infty} P \,(X_t = j \,| \,X_0 = N) \,dt \quad \hbox{for} \quad j = 1, 2, \ldots, N. $$
 More precisely, we have the following upper bound.
\begin{lemma} --
\label{lem:range-extinction}
 Let $b = 0$. Then,
 $$ \sum_{j = 1}^N \ j\,\tau_j \ \leq \ \sum_{j = 1}^N \ \sum_{i = 0}^j \ (a/4)^i. $$
\end{lemma}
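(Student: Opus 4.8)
The plan is to work with the mesoscopic process $X_t = \xi_t(x)$ in the no-migration regime $b = 0$, where $X_t$ is a birth-and-death chain on $\{0, 1, \ldots, N\}$ with absorbing state $0$. From the generator \eqref{eq:macro-model} with $b = 0$, the death rate in state $j$ is $j$ and the birth rate in state $j$ is $\beta_j := \frac{a}{N(N-1)}\, j(j-1)(N-j)$. The quantities $\tau_j = \int_0^\infty P(X_t = j \mid X_0 = N)\,dt$ are the expected total times spent in state $j$ before absorption, and they satisfy the balance equations obtained by integrating the forward equations: for each $j$, the expected number of jumps into state $j$ equals the expected number of jumps out, i.e.
$$ (\beta_j + j)\,\tau_j \ = \ \beta_{j-1}\,\tau_{j-1} + (j+1)\,\tau_{j+1} \quad \text{for } 1 \le j \le N-1, $$
together with the boundary relation at $j = N$, namely $N\,\tau_N = \beta_{N-1}\,\tau_{N-1} + 1$ (the $+1$ coming from the initial condition $X_0 = N$).

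**Extracting a telescoping identity.** First I would rewrite the balance equations in the standard "flux" form. Define $F_j := j\,\tau_j - \beta_{j-1}\,\tau_{j-1}$ for $j \ge 1$ (the net downward flux across the edge between $j-1$ and $j$). The balance equation at $j$ rearranges to $F_{j+1} = F_j$ for $1 \le j \le N-1$, while the boundary condition at $N$ gives $F_N = 1$; hence $F_j = 1$ for all $1 \le j \le N$. This yields the recursion
$$ j\,\tau_j \ = \ 1 + \beta_{j-1}\,\tau_{j-1} \ = \ 1 + \frac{a}{N(N-1)}\,(j-1)(j-2)(N-j+1)\,\tau_{j-1}. $$
The key estimate is then to bound the coefficient $\frac{a}{N(N-1)}\,(j-1)(j-2)(N-j+1)$ by $(a/4)\,(j-1)$: indeed $\frac{(j-2)(N-j+1)}{N-1} \le \frac{(j-1)(N-j)}{N-1} \le \frac{N^2/4}{N-1} \le \frac{N}{4}\cdot\frac{N}{N-1}$, and a slightly more careful bookkeeping (using $(j-2) \le j-1$ and the arithmetic–geometric mean bound on $j(N-j) \le N^2/4$) gives $\beta_{j-1} \le (a/4)(j-1)$ after absorbing the $N/(N-1)$ factor — this is exactly the point where the constant $a/4$ from the mean-field polynomial $Q$ enters. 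So $j\,\tau_j \le 1 + (a/4)\,(j-1)\,\tau_{j-1}$.

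**Unrolling the recursion.** With $b_j := j\,\tau_j$ the inequality becomes $b_j \le 1 + (a/4)\,b_{j-1}$, and $b_0 = 0$. Iterating gives $b_j \le \sum_{i=0}^{j-1} (a/4)^i \le \sum_{i=0}^{j}(a/4)^i$, and summing over $j$ from $1$ to $N$ produces $\sum_{j=1}^N j\,\tau_j \le \sum_{j=1}^N \sum_{i=0}^j (a/4)^i$, which is the claimed bound. (Keeping the upper index at $j$ rather than $j-1$ only weakens the bound and matches the stated form, which is convenient for the subsequent case analysis on whether $a/4$ is $<1$, $=1$, or $>1$.)

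**Main obstacle.** The conceptual content is light — it is the standard first-passage/occupation-time calculation for a one-dimensional birth-and-death chain — so the only real care needed is in two places: (i) justifying the boundary term $F_N = 1$ rigorously, i.e. checking that integrating the forward equation $\frac{d}{dt}P(X_t = N) = \beta_{N-1}P(X_t = N-1) - N\,P(X_t = N)$ from $0$ to $\infty$ gives $0 - 1 = \beta_{N-1}\tau_{N-1} - N\tau_N$ (this uses $P(X_0 = N) = 1$, $P(X_t = N) \to 0$, and finiteness of all $\tau_j$, which holds since absorption at $0$ occurs a.s. in finite expected time on a finite irreducible-on-$\{1,\ldots,N\}$ chain); and (ii) getting the polynomial bound $\beta_{j-1} \le (a/4)(j-1)$ clean, since the raw factor is $\frac{a}{N(N-1)}(j-1)(j-2)(N-j+1)$ and one must verify that $\frac{(j-2)(N-j+1)}{N-1} \le N/4$ for all $1 \le j \le N$ (the worst case is near $j \approx N/2$, where $(j-2)(N-j+1) \le (N-1)^2/4 < N(N-1)/4$, so the bound in fact holds with room to spare). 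Neither step is difficult; I expect (i) to be the one most worth stating explicitly to keep the argument airtight.
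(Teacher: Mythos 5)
Your proof is correct, but it follows a different mechanism than the paper. The paper first dominates $X_t$ by an auxiliary linear birth--death chain $Z_t$ with rates $j \to j+1$ at rate $(a/4)\,j$ and $j \to j-1$ at rate $j$ (via the same elementary bound $a\,j(j-1)(N-j)/(N(N-1)) \leq (a/4)\,j$ that you use), and then computes the occupation times of $Z_t$ \emph{exactly}: the expected numbers of visits $v_j$ satisfy a second-order linear recursion with characteristic polynomial $(X-1)(X-a/4)$, giving $v_j = \sum_{i=0}^{j}(a/4)^i$ and $\sigma_j = v_j/(\beta_j+\mu_j)$, with a separate boundary computation at $j = N$. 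You instead stay with the original chain and derive the exact flux identity $j\,\tau_j = 1 + \beta_{j-1}\tau_{j-1}$ (constant downward flux equal to one, coming from the single initial entry at $N$ and a.s.\ absorption at $0$), then insert the bound $\beta_{j-1}\leq (a/4)(j-1)$ to get the first-order inequality $j\,\tau_j \leq 1 + (a/4)\,(j-1)\,\tau_{j-1}$ and unroll it. This avoids both the coupling argument and the characteristic-polynomial computation, and it handles the boundary state $N$ and the interior states uniformly, so it is arguably more direct; the paper's route buys a clean exact formula for the dominating chain, which is reusable but not needed for the stated inequality. The two arguments share the single essential ingredient, namely the rate bound by $(a/4)\,j$.

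One small repair: in your middle paragraph the chain $(j-2)(N-j+1) \leq (j-1)(N-j)$ is false for $j > N/2+1$, so drop it; the justification you give later is the right one and suffices, since $(j-2)(N-j+1) \leq \bigl((N-1)/2\bigr)^2 \leq N(N-1)/4$ yields $\beta_{j-1} \leq (a/4)(j-1)$ for all $j$, the cases $j \in \{1,2\}$ being trivial because $\beta_0 = \beta_1 = 0$.
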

\begin{proof}
 To simplify some tedious calculations and find the upper bound in the statement of the lemma, we first observe that the number
 of individuals at patch $x$ is dominated by the number of particles in a certain simple birth and death process truncated at state $N$.
 More precisely, note that the rate of transition $j \to j + 1$ is bounded by
 $$ a \ \frac{j \,(j - 1)(N - j)}{N \,(N - 1)} \ \leq \ a \ \frac{j^2 \,(N - j)}{N^2} \ \leq \ (a/4) \,j. $$
 In particular, standard coupling arguments imply that
\begin{equation}
\label{eq:range-extinction-1}
  P \,(X_t \geq i \ | \ X_0 = N) \ \leq \ P \,(Z_t \geq i \ | \ Z_0 = N) \quad \hbox{for all} \quad i = 1, 2, \ldots, N
\end{equation}
 where $Z_t$ is the process with transitions
 $$ \begin{array}{rclcll}
     j & \to & j + 1 & \ \ \hbox{at rate} \ \ & \beta_j := (a/4) \,j & \ \ \hbox{for} \ j = 0, 1, \ldots, N - 1 \vspace*{2pt} \\
     j & \to & j - 1 & \ \ \hbox{at rate} \ \ &   \mu_j :=         j & \ \ \hbox{for} \ j = 0, 1, \ldots, N. \end{array} $$
 In particular, letting $\sigma_j$ denote the amount of time the process $Z_t$ spends in state $j$, which can be seen as the analog
 of the amount of time $\tau_j$, inequality \eqref{eq:range-extinction-1} implies that
\begin{equation}
\label{eq:range-extinction-2}
 \begin{array}{rcl}
   \displaystyle \sum_{j = 1}^N \ j \,\tau_j & = &
   \displaystyle \int_0^{\infty} \ \sum_{j = 1}^N \ j \,P \,(X_t = j) \,dt \ = \
   \displaystyle \int_0^{\infty} \ \sum_{j = 1}^N \ \sum_{i = 1}^j \ P \,(X_t = j) \,dt \vspace*{4pt} \\ & = &
   \displaystyle \int_0^{\infty} \ \sum_{i = 1}^N \ \sum_{j = i}^N \ P \,(X_t = j) \,dt \ = \
   \displaystyle \int_0^{\infty} \ \sum_{i = 1}^N \ P \,(X_t \geq i) \,dt \vspace*{4pt} \\ & \leq &
   \displaystyle \int_0^{\infty} \ \sum_{i = 1}^N \ P \,(Z_t \geq i) \,dt \ = \
   \displaystyle \sum_{j = 1}^N \ j \,\sigma_j. \end{array}
\end{equation}
 Now, to find an upper bound for the occupation times $\sigma_j$, we first let $v_j$ denote the expected number of visits of $Z_t$ in
 state $j$, which gives the recursive relationship
\begin{equation}
\label{eq:range-extinction-3}
 \begin{array}{rcl}
   v_j & = & \mu_{j + 1} \,(\beta_{j + 1} + \mu_{j + 1})^{-1} \,v_{j + 1} + \beta_{j - 1} \,(\beta_{j - 1} + \mu_{j - 1})^{-1} \,v_{j - 1} \vspace*{4pt} \\
       & = & 4 \,(a + 4)^{-1} \,v_{j + 1} + a \,(a + 4)^{-1} \,v_{j - 1} \end{array}
\end{equation}
 for $j = 1, 2, \ldots, N - 1$, with boundary conditions
\begin{equation}
\label{eq:range-extinction-4}
   v_0 \ = \ \mu_1 \,(\beta_1 + \mu_1)^{-1} \,v_1 \quad \hbox{and} \quad
   v_N \ = \ 1 + \beta_{N - 1} \,(\beta_{N - 1} + \mu_{N - 1})^{-1} \,v_{N - 1}.
\end{equation}
 Note that the extra 1 in the expression of $v_N$ comes from the fact that $Z_0 = N$.
 We observe that the recursive relationship \eqref{eq:range-extinction-3} can be re-written as
 $$ v_j \ = \ (1 + a/4) \,v_{j - 1} - (a/4) \,v_{j - 2} \quad \hbox{for} \quad j = 2, 3, \ldots, N $$
 which has characteristic polynomial
 $$ X^2 - (1 + a/4) X + a/4 \ = \ (X - 1)(X - a/4). $$
 Using in addition that $v_0 = 1$ since state 0 is absorbing, we get
\begin{equation}
\label{eq:range-extinction-5}
  v_1 \ = \ 1 + (a/4) \quad \hbox{and} \quad v_j \ = \ c_{22} + c_{23} \,(a/4)^j \ = \ 1 + (a/4) + \cdots + (a/4)^j
\end{equation}
 from which it follows that
\begin{equation}
\label{eq:range-extinction-6}
  \sigma_j \ = \ \frac{v_j}{\beta_j + \mu_j}
           \ = \ j^{-1} \,(1 + a/4)^{-1} \ \sum_{i = 0}^j \ (a/4)^i
           \ \leq \ j^{-1} \ \sum_{i = 0}^j \ (a/4)^i
\end{equation}
 for $j = 1, 2, \ldots, N - 1$.
 Using \eqref{eq:range-extinction-4}--\eqref{eq:range-extinction-5}, we also deduce
\begin{equation}
\label{eq:range-extinction-7}
 \begin{array}{rcl}
    \sigma_N & = & \displaystyle \frac{1}{\mu_N} \ \bigg(1 + \frac{\beta_{N - 1}}{\beta_{N - 1} + \mu_{N - 1}} \ v_{N - 1} \bigg)
             \ \leq \ \displaystyle N^{-1} \ \bigg(1 + \frac{a/4}{1 + a/4} \ \sum_{i = 0}^{N - 1} \ (a/4)^i \bigg) \vspace*{4pt} \\
             & \leq & \displaystyle N^{-1} \ \bigg(1 + (a/4) \ \sum_{i = 0}^{N - 1} \ (a/4)^i \bigg)
             \ = \ N^{-1} \ \sum_{i = 0}^N \ (a/4)^i. \end{array}
\end{equation}
 Finally, combining \eqref{eq:range-extinction-2} with \eqref{eq:range-extinction-6}--\eqref{eq:range-extinction-7}, we get
 $$ \sum_{j = 1}^N \ j \,\tau_j \ \leq \
    \sum_{j = 1}^N \ j \,\sigma_j \ \leq \
    \sum_{j = 1}^N \ j \,j^{-1} \ \sum_{i = 0}^j \ (a/4)^i \ = \
    \sum_{j = 1}^N \ \sum_{i = 0}^j \ (a/4)^i $$
 which completes the proof.
\end{proof} \\ \\
 Using Lemma \ref{lem:range-extinction}, we now deduce upper bounds for the probability of a collision.
\begin{lemma} --
\label{lem:range-collision}
 For all $M$ large
\begin{enumerate}
 \item and all $a < 4$, we have \vspace*{4pt}
 \item[] \hspace*{20pt} $P \,(\hbox{collision} \ | \ \xi_0 = \ind_x) \ \leq \ M^{-1/3} \,(1/2 + b \,N (1 - a/4)^{-1})$. \vspace*{4pt}
 \item and $a = 4$, we have \vspace*{4pt}
 \item[] \hspace*{20pt} $P \,(\hbox{collision} \ | \ \xi_0 = \ind_x) \ \leq \ M^{-1/3} \,(1/2 + (b/2)(N + 2)^2)$. \vspace*{4pt}
 \item and all $a > 4$, we have \vspace*{4pt}
 \item[] \hspace*{20pt} $P \,(\hbox{collision} \ | \ \xi_0 = \ind_x) \ \leq \ M^{-1/3} \,(1/2 + b \,(a/4 - 1)^{-2} \,(a/4)^{N + 2})$.
\end{enumerate}
\end{lemma}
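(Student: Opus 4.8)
The plan is to bound the collision probability by a first‑moment estimate for the single‑patch process with no migration, and then to trade this against a birthday‑type bound using the threshold $M^{1/3}$. The first step is a coupling. Starting from $\ind_x$, as long as no collision has occurred every patch $y \neq x$ has received at most one offspring from $x$ and hence contains at most one individual, so no pair of occupied locations can form outside $x$; consequently no offspring is ever emitted by a patch other than $x$, none enters $x$, and $X_t := \xi_t (x)$ evolves exactly according to the $b = 0$ dynamics of Lemma~\ref{lem:range-extinction}. One checks this by a short induction on the successive jump times: a jump creating a pair outside $x$, or a second offspring sent from $x$ to an already–targeted patch, would itself be a collision, so neither can precede the first collision. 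Since whether a collision occurs is determined by the offspring emitted out of $x$ before the first one, $P \,(\hbox{collision} \mid \xi_0 = \ind_x)$ equals the probability of a collision in the auxiliary process in which $X_t$ follows the $b = 0$ dynamics for all $t$ and, conditionally on its trajectory, offspring are emitted out of $x$ according to a Poisson process of rate $b \,X_t (X_t - 1)/(N - 1)$, each one routed to one of the $2M$ neighboring patches uniformly and independently.

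Next, let $\mathcal{N}$ be the total number of offspring so emitted out of $x$; conditionally on the trajectory of $X$ it is Poisson with mean $\frac{b}{N - 1} \int_0^{\infty} X_t (X_t - 1)\,dt$, hence, using $j \,(j - 1)/(N - 1) \leq j$ for $1 \leq j \leq N$ together with Lemma~\ref{lem:range-extinction},
$$ E \,[\mathcal{N}] \ = \ \frac{b}{N - 1} \sum_{j = 1}^N j \,(j - 1)\,\tau_j \ \leq \ b \sum_{j = 1}^N j \,\tau_j \ \leq \ b \sum_{j = 1}^N \sum_{i = 0}^j (a/4)^i. $$
Estimating the inner sum by $(1 - a/4)^{-1}$ when $a < 4$, by $j + 1$ when $a = 4$, and by $(a/4)^{j + 1} (a/4 - 1)^{-1}$ when $a > 4$, and then summing over $j$, the last expression is at most $b \,N (1 - a/4)^{-1}$, $(b/2)(N + 2)^2$, and $b \,(a/4 - 1)^{-2} (a/4)^{N + 2}$ in the three cases, i.e.\ exactly the bracketed quantities in the statement.

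Finally, I would set $K := \lceil M^{1/3} \rceil$, split according to whether $\mathcal{N} \geq K$, and estimate the two pieces separately: Markov's inequality on $\{\mathcal{N} \geq K\}$, and, on the complement, the fact that conditionally on $\mathcal{N} = n$ the $n$ target patches are independent and uniform over $2M$ values, so that a collision occurs with probability at most $\binom{n}{2} (2M)^{-1}$ by a union bound over pairs. This yields
$$ P \,(\hbox{collision} \mid \xi_0 = \ind_x) \ \leq \ \frac{E \,[\mathcal{N}]}{K} + \binom{K - 1}{2} \frac{1}{2M} \ \leq \ M^{-1/3} \,E \,[\mathcal{N}] + \frac{1}{4} \,M^{-1/3}, $$
using $K \geq M^{1/3}$ in the first term and $K - 1 < M^{1/3}$ in the second; combining with the preceding paragraph gives the three claimed inequalities. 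The only genuinely delicate point is the coupling of the first step — making rigorous, by induction on the jump times, that before the first collision the real configuration coincides patch by patch with the auxiliary description; everything after that is routine, and the exponent $1/3$ is precisely the choice that makes both the ``$\mathcal{N}$ atypically large'' contribution and the ``birthday'' contribution decay like $M^{-1/3}$.
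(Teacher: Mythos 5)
Your proposal is correct and follows essentially the same route as the paper: split on whether the number of offspring emitted from the source patch exceeds $M^{1/3}$, bound the large-count piece by Markov's inequality using the occupation-time estimate of Lemma~\ref{lem:range-extinction}, and bound the small-count piece by a birthday-problem estimate over the $2M$ target patches (your union bound $\binom{n}{2}(2M)^{-1}$ replaces the paper's product $1-\prod_j(1-j/2M)$, yielding the same order $M^{-1/3}$). Your explicit coupling showing that, before the first collision, the source patch evolves by the $b=0$ dynamics is left implicit in the paper but is exactly the justification it relies on, so the two arguments coincide in substance.
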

\begin{proof}
 Let $X$ denote the number of individuals born at patch $x$ and then sent outside the patch before the patch goes extinct.
 The idea is to use the following conditioning:
\begin{equation}
\label{eq:range-collision-1}
 \begin{array}{rcl}
   P \,(\hbox{collision}) & = &
   P \,(\hbox{collision} \ | \ X \leq M^{1/3}) \,P \,(X \leq M^{1/3}) \vspace*{4pt} \\ && \hspace*{50pt} + \
   P \,(\hbox{collision} \ | \ X > M^{1/3}) \,P \,(X > M^{1/3}) \vspace*{4pt} \\ & \leq &
   P \,(\hbox{collision} \ | \ X \leq M^{1/3}) + P \,(X > M^{1/3}). \end{array}
\end{equation}
 To estimate the first term in \eqref{eq:range-collision-1}, we observe that, since offspring sent outside the patch land on a patch chosen
 uniformly at random from a set of $2M$ patches, we have
\begin{equation}
\label{eq:range-collision-2}
  \begin{array}{l}
   \displaystyle P \,(\hbox{collision} \ | \ X \leq M^{1/3}) \ \leq \
   \displaystyle 1 - \prod_{j = 0}^{M^{1/3} - 1} \bigg(1 - \frac{j}{2M} \bigg) \vspace*{0pt} \\ \hspace*{35pt} \leq \
   \displaystyle 1 - \bigg(1 - \frac{M^{1/3}}{2M} \bigg)^{M^{1/3}} \ \leq \
   \displaystyle 1 - \exp \bigg(- \frac{M^{2/3}}{2M} \bigg) \ \leq \ (1/2) \,M^{-1/3} \end{array}
\end{equation}
 for all $M$ large.
 For the second term, we first use Lemma~\ref{lem:range-extinction} to get
 $$ \begin{array}{rcl}
     E X & = &    \displaystyle b \,N \ \sum_{j = 2}^N \ \frac{j \,(j - 1)}{N \,(N - 1)} \ \int_0^{\infty} P \,(X_t = j \ | \ X_0 = N) \,dt \vspace*{4pt} \\
         & \leq & \displaystyle b \ \sum_{j = 2}^N \ j \,\int_0^{\infty} P \,(X_t = j \ | \ X_0 = N) \,dt
         \ \leq \ \displaystyle b \ \sum_{j = 1}^N \ j \,\tau_j
         \ \leq \ \displaystyle b \ \sum_{j = 1}^N \ \sum_{i = 0}^j \ (a/4)^i. \end{array} $$
 In particular,
\begin{enumerate}
 \item for all $a < 4$, we have \vspace*{4pt}
 \item[] \hspace*{20pt} $E X \ \leq \ b \,(1 - a/4)^{-1} \ \sum_{j = 1}^N \ (1 - (a/4)^{j + 1}) \ \leq \ b \,N (1 - a/4)^{-1}$. \vspace*{8pt}
 \item for $a = 4$, we have \vspace*{4pt}
 \item[] \hspace*{20pt} $E X \ \leq \ b \ \sum_{j = 1}^N \ (j + 1) \ \leq \ (b/2)(N + 2)^2$. \vspace*{8pt}
 \item for all $a > 4$, we have \vspace*{4pt}
 \item[] \hspace*{20pt} $E X \ \leq \ b \,(a/4 - 1)^{-1} \ \sum_{j = 1}^N \ (a/4)^{j + 1} \ \leq \ b \,(a/4 - 1)^{-2} \,(a/4)^{N + 2}$.
\end{enumerate}
 Applying Markov's inequality, we deduce that
\begin{enumerate}
 \item for all $a < 4$, we have \vspace*{4pt}
 \item[] \hspace*{20pt} $P \,(X > M^{1/3}) \ \leq \ M^{-1/3} \ E X \ \leq \ M^{-1/3} \,b \,N (1 - a/4)^{-1}$. \vspace*{4pt}
 \item for $a = 4$, we have \vspace*{4pt}
 \item[] \hspace*{20pt} $P \,(X > M^{1/3}) \ \leq \ M^{-1/3} \ E X \ \leq \ M^{-1/3} \,(b/2)(N + 2)^2$. \vspace*{4pt}
 \item for all $a > 4$, we have \vspace*{4pt}
 \item[] \hspace*{20pt} $P \,(X > M^{1/3}) \ \leq \ M^{-1/3} \ E X \ \leq \ M^{-1/3} \,b \,(a/4 - 1)^{-2} \,(a/4)^{N + 2}$.
\end{enumerate}
 The lemma follows by combining these estimates together with \eqref{eq:range-collision-1}--\eqref{eq:range-collision-2}.
\end{proof} \\ \\
 Theorem \ref{th:range} directly follows from \eqref{eq:range-1} and Lemma \ref{lem:range-collision}.


\section{Proof of Theorem \ref{th:extinction}}
\label{sec:extinction}

\indent To prove Theorem \ref{th:extinction}, we return to the microscopic representation $\eta_{\cdot}$ of the process that looks at the
 metapopulation at the individual level rather than at the patch level.
 The proof is based on duality techniques so the first step is to define a graphical representation to construct both the microscopic process
 and its dual process. Recall that
 $$ \begin{array}{rcl}
     A (\x) & := & \hbox{set of potential parents' pairs within the patch containing $\x$} \vspace*{3pt} \\
            & := & \{(\y, \z) \in (\Z \times K_N)^2 : \y \neq \z \ \hbox{and} \ \pi (\x) = \pi (\y) = \pi (\z) \} \vspace*{9pt} \\
     B (\x) & := & \hbox{set of potential parents' pairs in the neighborhood of the patch containing $\x$} \vspace*{3pt} \\
            & := & \{(\y, \z) \in (\Z \times K_N)^2 : \y \neq \z \ \hbox{and} \ \pi (\x) \sim \pi (\y) = \pi (\z) \}. \end{array} $$
 The graphical representation consists of three collections of Poisson processes:
\begin{enumerate}
 \item For each $\x \in \Z \times K_N$ and each $(\y, \z) \in A (\x)$, we let \vspace*{4pt}
 \item[] \hspace*{20pt} $A_t (\x, \y, \z) \ := \ $ Poisson process with intensity $a \,(N (N - 1))^{-1}$ \vspace*{2pt}
 \item[] \hspace*{20pt} $a_n (\x, \y, \z) \ := \ $ the $n$th arrival time of $A_t (\x, \y, \z)$. \vspace*{8pt}
 \item For each $\x \in \Z \times K_N$ and each $(\y, \z) \in B (\x)$, we let \vspace*{4pt}
 \item[] \hspace*{20pt} $B_t (\x, \y, \z) \ := \ $ Poisson process with intensity $(b / 2M) \,(N (N - 1))^{-1}$ \vspace*{2pt}
 \item[] \hspace*{20pt} $b_n (\x, \y, \z) \ := \ $ the $n$th arrival time of $B_t (\x, \y, \z)$. \vspace*{8pt}
 \item For each $\x \in \Z \times K_N$, we let \vspace*{4pt}
 \item[] \hspace*{20pt} $D_t (\x) \ := \ $ Poisson process with intensity one \vspace*{2pt}
 \item[] \hspace*{20pt} $d_n (\x) \ := \ $ the $n$th arrival time of $D_t (\x)$.
\end{enumerate}
 The microscopic process is constructed from these Poisson processes as follows:
\begin{itemize}
 \item {\bf Births}: at time $t = a_n (\x, \y, \z)$ or $t = b_n (\x, \y, \z)$, we set
  $$ \eta_t (\x) := 1 \ \ \hbox{when} \ \ \eta_{t-} (\y) \,\eta_{t-} (\z) = 1 \quad \hbox{and} \quad
     \eta_t (\x) := \eta_{t-} (\x) \ \ \hbox{when} \ \ \eta_{t-} (\y) \,\eta_{t-} (\z) = 0. $$
 \item {\bf Deaths}: at time $t = d_n (\x)$, we set $\eta_t (\x) := 0$.
\end{itemize}
 From the graphical representation, we also construct
 $$ \hat \eta_s (\w, t) \ := \ \hbox{the dual process starting at $(\w, t)$} $$
 which allows to keep track of the state at $(\w, t)$ based on the configuration at time $t - s$.
 The dual process of the contact process with sexual reproduction consists of a collection of finite subsets of the set of
 sites $\Z \times K_N$ that evolves based on the graphical representation as follows:
\begin{enumerate}
\item The process starts from the singleton $\hat \eta_0 (\w, t) = \{\!\{\w \}\!\}$. \vspace*{4pt}
\item {\bf Births}: if site $\x \in \hat \eta_{s-} (\w, t)$ where either
 $$ s = t - a_n (\x, \y, \z) \quad \hbox{or} \quad s = t - b_n (\x, \y, \z) \qquad \hbox{for some} \ n > 0 $$
 then, for each set $B \in \hat \eta_{s-} (\w, t)$ that contains $\x$, we add the set which is obtained from~$B$ by
 removing $\x$ and adding its parents' sites $\y$ and $\z$, i.e.,
 $$ \hat \eta_s (\w, t) \ := \ \hat \eta_{s-} (\w, t) \ \cup \ \{(B - \{\x \}) \,\cup \,\{\y, \z \} : \x \in B \in \hat \eta_{s-} (\w, t) \}. $$
\item {\bf Deaths}: if site $\x \in \hat \eta_{s-} (\w, t)$ where $s = t - d_n (\x)$ for some $n > 0$ then we remove from the dual process
 all the sets that contain $\x$, i.e.,
 $$ \hat \eta_s (\w, t) \ := \ \hat \eta_{s-} (\w, t) - \{B \in \hat \eta_{s-} (\w, t) : \x \in B \}. $$
\end{enumerate}
 We say that a {\bf collision} occurs at dual time $s$ whenever
 $$ \hbox{the birth event 2 above occurs and} \ \hat \eta_{s-} (\w, t) \,\cap \,\{\y, \z \} \neq \varnothing. $$
 In words, there is a collision whenever there is a birth $\{\y, \z \} \to \x$ while $\x$ and at least one of its two
 parents $\y$ and $\z$ belong to the dual process.
 The dual process allows to deduce the state of site $\w$ at time $t$ from the configuration at earlier times.
 More precisely, identifying the microscopic process with the set of occupied sites, the construction of the dual process implies
 the following duality relationship between the two processes:
\begin{equation}
\label{eq:duality-1}
  \w \in \eta_t \quad \hbox{if and only if} \quad B \,\subset \,\eta_{t - s} \ \ \hbox{for some} \ \ B \in \hat \eta_s (\w, t).
\end{equation}
 In view of \eqref{eq:duality-1}, regardless of the initial configuration $\eta_0$,
\begin{equation}
\label{eq:duality-2}
  \eta_t (\w) = 0 \quad \hbox{whenever} \quad \hat \eta_t (\w, t) = \varnothing
\end{equation}
 which is the key to proving the theorem.
 The dual process of the contact process with sexual reproduction is a very complicated object and finding even rough conditions on the
 parameters of the system for almost sure extinction of the dual process is a challenging problem.
 In the absence of collision, however, the problem is simplified due to some events becoming independent.
 In particular, the next step is to show that the probability of a collision before a fixed dual time can be made arbitrarily small by
 choosing $N$ sufficiently large, which is done in the following lemma.
\begin{lemma} --
\label{lem:extinction-collision}
 For all $\ep, t > 0$ there exists $N$ sufficiently large such that the probability that a collision occurs before dual time $t$ is
 smaller than $(1/2) \ep$.
\end{lemma}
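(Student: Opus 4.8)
\noindent The plan is to control the size of the dual process up to dual time $t$ and then to observe that, at each birth event, a collision forces one of the two freshly produced parents to fall on one of the finitely many sites that are currently active, an event of probability of order $1 / N$. First I would introduce
$$ V_s \ := \ \bigcup_{B \,\in \,\hat \eta_s (\w, t)} B $$
for the set of sites belonging to some set of the dual process at dual time $s$, so that the collision event $\hat \eta_{s-} (\w, t) \cap \{\y, \z \} \neq \varnothing$ is just $\{\y, \z \} \cap V_{s-} \neq \varnothing$. Each active site is involved in births at total rate $a + b$ (a value that does not depend on $N$), a birth enlarges $V_s$ by at most two sites, and a death can only remove sites from $V_s$; hence both $|V_s|$ and the number $B_t$ of birth events performed by the dual before dual time $t$ are dominated by the corresponding quantities for a pure-birth branching process with branching rate $a + b$. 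In particular $E \,B_t \leq m (t)$ for a finite constant $m (t)$ that does not depend on $N$, so Markov's inequality gives
$$ P \,(B_t > K) \ \leq \ m (t) / K \quad \hbox{for all} \ K > 0. $$

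\noindent Next I would bound the conditional probability of a collision at a single birth event. From the graphical representation one checks that, given the dual process up to dual time $s-$ and the occurrence of a birth at an active site $\x$ at dual time $s$, the parent pair $(\y, \z)$ is uniformly distributed over $A (\x)$ or over $B (\x)$ and independent of $V_{s-}$. In either case each of $\y$ and $\z$ is uniform over the $N$ sites of a single patch, at most $|V_{s-}|$ of which are active, so the conditional probability of a collision at this birth event is at most $2 \,|V_{s-}| / N$. Stopping the dual process the first time it has performed more than $K$ birth events, we see that before that time there are at most $K$ birth events and $|V_{s-}| \leq 1 + 2 K$ at each of them; summing the conditional collision probabilities over these events and using Markov's inequality yields
$$ P \,(\hbox{a collision occurs before dual time} \ t \ \hbox{and} \ B_t \leq K) \ \leq \ \frac{2 K \,(1 + 2 K)}{N}. $$

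\noindent Combining the last two displays,
$$ P \,(\hbox{a collision occurs before dual time} \ t) \ \leq \ \frac{m (t)}{K} + \frac{2 K \,(1 + 2 K)}{N}, $$
and choosing first $K := 4 \,m (t) / \ep$, so that the first term is at most $\ep / 4$, and then $N$ large enough that the second term is at most $\ep / 4$, we obtain the desired bound $(1/2) \,\ep$. I expect the main obstacle to be the bookkeeping that makes the conditioning rigorous, namely checking that the dual process may be realized by drawing, at each birth event, an independent uniformly distributed parent pair, so that the positions of the new parents are genuinely independent of the current set of active sites; once this is granted, the stochastic domination by a branching process and the union bound over birth events are routine.
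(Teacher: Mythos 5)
Your proposal is correct and follows essentially the same route as the paper: dominate the growth of the dual process by a pure-birth branching process with rate $a+b$ (independent of $N$), use Markov's inequality to restrict to at most $K$ birth events, bound the collision probability on that event by a quantity of order $K^2/N$ because each parent is uniform over $N$ locations of which only $O(K)$ are active, and then choose $K$ and $N$ appropriately (the paper takes $K = N^{0.2}$ and lets $N \to \infty$, you fix $K$ in terms of $\ep$ and $t$ first; both work). The conditioning step you flag as the main obstacle is justified exactly as you suggest, by the uniformity and independence of the parent pair built into the graphical representation, and is also implicit in the paper's argument.
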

\begin{proof}
 The basic idea is that the number of points $N_s$ added to the dual process in $s$ time units does not depend on the parameter $N$.
 More precisely, the random variable $N_s$ is stochastically smaller than the number of particles at time $s$ in a branching
 process $X_s$ with no death and in which each particle gives birth to a pair of particles at rate $a + b$ therefore
 $$ E N_s \ \leq \ E X_s \ \leq \ E X_t \ \leq \ \exp \,(2 (a + b) \,t) \quad \hbox{for all} \ s \in (0, t). $$
 Using Markov's inequality further implies that
\begin{equation}
\label{eq:extinction-collision-1}
  P \,(N_s > K \ \hbox{for some} \ s < t) \ \leq \ P \,(X_t > K) \ \leq \ K^{-1} \,E X_t \ \leq \ K^{-1} \,\exp \,(2 (a + b) \,t).
\end{equation}
 Since on the event that $N_s \leq K$ for all $s < t$ there are at most $K$ births and since at each birth the parents are from
 the set of all pairs among at least $N$ vertices, we also have
\begin{equation}
\label{eq:extinction-collision-2}
 \begin{array}{l}
   P \,(\hbox{collision before dual time $t$} \ | \ N_s \leq K \ \hbox{for all} \ s < t) \vspace*{4pt} \\ \hspace*{40pt} \leq \
   K \,(1 - (N - K)(N - K - 1) \,N^{-1} (N - 1)^{-1}) \ \leq \ 2K (K + 1) N^{-1}. \end{array}
\end{equation}
 Combining \eqref{eq:extinction-collision-1}--\eqref{eq:extinction-collision-2}, we get
 $$ \begin{array}{l}
     P \,(\hbox{collision before dual time $t$}) \vspace*{4pt} \\ \hspace*{40pt} = \
     P \,(\hbox{collision before dual time $t$} \ \hbox{and} \ N_s > K \ \hbox{for some} \ s < t) \vspace*{4pt} \\ \hspace*{80pt} + \
     P \,(\hbox{collision before dual time $t$} \ \hbox{and} \ N_t \leq K \ \hbox{for all} \ s < t) \vspace*{4pt} \\ \hspace*{40pt} \leq \
     P \,(N_s > K \ \hbox{for some} \ s < t) \vspace*{4pt} \\ \hspace*{80pt} + \
     P \,(\hbox{collision before dual time $t$} \ | \ N_s \leq K \ \hbox{for all} \ s < t) \vspace*{4pt} \\ \hspace*{40pt} \leq \
     K^{-1} \,\exp \,(2 (a + b) \,t) + 2K (K + 1) N^{-1} \end{array} $$
 which, taking $K = N^{0.2}$, tends to zero as $N \to \infty$.
\end{proof} \\ \\
 Returning to our key ingredient \eqref{eq:duality-2}, the next step is to prove almost sure extinction under the conditions of the
 theorem of a version of the dual process that excludes collisions.
 This version is constructed from the following random variables: for all $u \in (0, 1)$ and $n > 0$, we let
\begin{enumerate}
 \item $\mathbf b_n (u)$ = the $n$th arrival time of a Poisson process with intensity $a + b$. \vspace*{4pt}
 \item $\mathbf d_n (u)$ = the $n$th arrival time of a Poisson process with intensity one. \vspace*{4pt}
 \item $\mathbf U_n^+ (u)$ and $\mathbf U_n^- (u)$ are independent $\uniform (0, 1)$.
\end{enumerate}
 For each $(w, t) \in (0, 1) \times \R_+$, we construct
 $$ \hat \zeta_s (w, t) \ := \ \hbox{version of the dual process with no collision starting at $(w, t)$}. $$
 This process consists of a collection of finite subsets of the unit interval $(0, 1)$.
\begin{enumerate}
\item The process starts from the singleton $\hat \zeta_0 (w, t) = \{\!\{w \}\!\}$. \vspace*{4pt}
\item {\bf Births}: if point $u \in \hat \zeta_{s-} (w, t)$ where
 $$ s = t - \mathbf b_n (u) \quad \hbox{for some} \ n > 0 $$
 then, for each set $B \in \hat \zeta_{s-} (w, t)$ that contains $u$, we add the set which is obtained from~$B$ by
 removing $u$ and adding the two points $\mathbf U_n^+ (u)$ and $\mathbf U_n^- (u)$, i.e.,
 $$ \hat \zeta_s (w, t) \ := \ \hat \zeta_{s-} (w, t) \ \cup \ \{(B - \{u \}) \,\cup \,\{\mathbf U_n^+ (u), \mathbf U_n^- (u) \} : u \in B \in \hat \zeta_{s-} (w, t) \}. $$
\item {\bf Deaths}: if point $u \in \hat \zeta_{s-} (w, t)$ where
 $$ s = t - \mathbf d_n (u) \quad \hbox{for some} \ n > 0 $$
 then we remove all the sets that contain $u$, i.e.,
 $$ \hat \zeta_s (w, t) \ := \ \hat \zeta_{s-} (w, t) - \{B \in \hat \zeta_{s-} (w, t) : u \in B \}. $$
\end{enumerate}
 The process evolves according to the exact same rules as the dual process except that at each birth event the parents
 are chosen uniformly at random in the unit interval rather than from a finite collection of neighbors.
 In particular, defining collisions as for the dual process, collisions now occur with probability zero, and it directly
 follows from the construction that
\begin{equation}
\label{eq:duality-coupling}
  P \,(\hat \eta_t (\w, t) = \varnothing \ | \ \hbox{no collision before dual time $t$}) \ = \ P \,(\hat \zeta_t (w, t) = \varnothing).
\end{equation}
 Both this process and the dual process are naturally defined only for $0 \leq s \leq t$ but it is convenient to assume
 that all the Poisson processes introduced above are also defined for negative times so that both processes are well defined
 for all $s \geq 0$.
 Then, we say that
 $$ \begin{array}{rrcl}
    (w, t) \in (0, 1) \times \R_+ & \hbox{{\bf lives forever}} & \hbox{when} & \hat \zeta_s (w, t) \neq \varnothing \ \hbox{for all} \ s \geq 0 \vspace*{2pt} \\
                                  & \hbox{{\bf dies out}}      & \hbox{when} & \hat \zeta_s (w, t)   =  \varnothing \ \hbox{for some} \ s \geq 0. \end{array} $$
 and similarly for the dual process:
 $$ \begin{array}{rrcl}
    (\w, t) \in (\Z \times K_N) \times \R_+ & \hbox{{\bf lives forever}} & \hbox{when} & \hat \eta_s (\w, t) \neq \varnothing \ \hbox{for all} \ s \geq 0 \vspace*{2pt} \\
                                            & \hbox{{\bf dies out}}      & \hbox{when} & \hat \eta_s (\w, t)   =  \varnothing \ \hbox{for some} \ s \geq 0. \end{array} $$
 Due to the lack of collision and independence of some key events, the exact condition for~$(w, t)$ to die out with probability
 one can be found, which is done in the next lemma.
\begin{lemma} --
\label{lem:extinction-dual}
 Assume that $a + b < 4$.
 Then, $P \,((w, t) \ \hbox{dies out}) = 1$.
\end{lemma}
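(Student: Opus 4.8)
The plan is to derive a renewal‑type equation for the survival function of the collision‑free dual and then let time go to infinity. Because all the Poisson processes are defined on the whole real line, the law of $s\mapsto\hat\zeta_s(w,t)$ does not depend on $t$, so I would set $\Pi(s):=P(\hat\zeta_s(w,t)\neq\varnothing)$. This is non‑increasing in $s$ (the collection stays empty once it empties), hence $\Pi_\infty:=\lim_{s\to\infty}\Pi(s)=P((w,t)\ \hbox{lives forever})$ exists, and the lemma is exactly the assertion that $\Pi_\infty=0$.

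The heart of the argument is a first‑step decomposition on the initial point $w$. Note first that $w$ only ever belongs to the singleton $\{\!\{w\}\!\}$, so $\hat\zeta_s(w,t)\neq\varnothing$ if and only if either $w$ is still alive at $s$ or $w$ died at some $d\le s$ and at least one birth event of $w$ before time $d$ produced a pair $\{p_1,p_2\}$ such that the dual subprocess descending from $\{p_1,p_2\}$ is still nonempty at time $s$. Condition on the death time $D_w$, exponential with parameter one. The birth events of $w$ form a rate‑$(a+b)$ Poisson process on $[0,D_w)$, and here the absence of collisions is essential: distinct birth events, and the two halves of a single birth event, are built from disjoint families of fresh uniform marks and independent clocks, so the attached subprocesses are mutually independent, and each is distributed as the dual started from one point and run for the remaining time $s-\sigma$. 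Since a set descending from $\{p_1,p_2\}$ is present at $s$ exactly when both the $p_1$‑subtree and the $p_2$‑subtree have all leaves alive at $s$, a birth at time $\sigma$ "succeeds" with probability $\Pi(s-\sigma)^2$, and the void probability of the resulting thinned Poisson process gives
\[ 1-\Pi(s)\ =\ \int_0^s e^{-d}\exp\Big(-(a+b)\int_0^d\Pi(s-\sigma)^2\,d\sigma\Big)\,dd . \]

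To conclude, let $s\to\infty$. For each fixed $d$ dominated convergence yields $\int_0^d\Pi(s-\sigma)^2\,d\sigma\to d\,\Pi_\infty^2$, and a second dominated convergence in $d$ (the integrand is bounded by $e^{-d}$) gives
\[ 1-\Pi_\infty\ =\ \int_0^\infty e^{-(1+(a+b)\Pi_\infty^2)\,d}\,dd\ =\ \frac{1}{1+(a+b)\Pi_\infty^2}, \]
that is, $\Pi_\infty\big((a+b)\Pi_\infty^2-(a+b)\Pi_\infty+1\big)=0$. The quadratic factor has discriminant $(a+b)(a+b-4)$, which is negative for $0<a+b<4$, so $\Pi_\infty=0$ is the only admissible value, which is the lemma. (This is the sharp threshold: at $a+b=4$ the quadratic has the double root $1/2$, and for $a+b>4$ its roots are $\tfrac12\pm\tfrac12\sqrt{1-4/(a+b)}$, the nontrivial roots of the mean‑field polynomial with birth parameter $a+b$.)

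The step I expect to require the most care is the independence claim underlying the factor $\Pi(s-\sigma)^2$ and the passage from the Poisson process to its exponential void probability — one must verify that in the collision‑free construction distinct birth events genuinely launch independent copies of the dual — together with the (routine but not entirely free) justification that the limit $s\to\infty$ can be taken inside both integrals; the remainder is bookkeeping.
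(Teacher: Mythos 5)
Your proof is correct and, at its core, rests on the same two structural facts as the paper's argument -- time/space homogeneity of the collision-free dual and the independence of descendant subduals in the absence of collisions (the paper's \eqref{eq:extinction-dual-2}) -- but the decomposition is genuinely different. The paper works directly with the infinite-horizon event: using \eqref{eq:extinction-dual-1}--\eqref{eq:extinction-dual-3} it conditions on whether the first event at $w$ going backward in time is a birth or a death, which yields $(a+b)\,\rho^2(1-\rho) - \rho = 0$ and then $\rho = 0$ since $\rho(1-\rho) \leq 1/4 < (a+b)^{-1}$. You instead derive a finite-horizon renewal equation for $\Pi(s) = P(\hat\zeta_s(w,t) \neq \varnothing)$ by conditioning on the rate-one death time of $w$ and applying the Poisson void-probability (marking) formula to the rate-$(a+b)$ birth process of $w$, each birth being marked by the pair of independent single-point duals; letting $s \to \infty$ gives $1 - \Pi_\infty = (1 + (a+b)\Pi_\infty^2)^{-1}$, which is algebraically the same cubic $\Pi_\infty\,((a+b)\,\Pi_\infty(1-\Pi_\infty) - 1) = 0$, and the negative discriminant for $a+b<4$ finishes it (with $a+b=0$ trivial). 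What your route buys: you avoid conditioning the infinite-horizon event at a random time and you do not need the paper's characterization \eqref{eq:extinction-dual-3}, replacing them by the routine monotone limit $\Pi(s) \downarrow \Pi_\infty$ and dominated convergence; what it costs is the extra bookkeeping of the integral equation and the product-structure claim that the collection descending from a pair $\{p_1, p_2\}$ is nonempty at dual time $s$ iff both single-point duals are nonempty -- this does hold, since without collisions every set descending from $\{p_1,p_2\}$ is of the form $B_1 \cup B_2$ with $B_i$ descending from $p_i$, and it is precisely the independence ingredient the paper also asserts, so you should state it with that one-line justification rather than the slightly misleading phrase about "all leaves alive."
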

\begin{proof}
 The proof relies on three ingredients.
 First, due to symmetry,
\begin{equation}
\label{eq:extinction-dual-1}
  \rho (w, t) \ := \ P \,((w, t) \ \hbox{lives forever}) \ = \ \rho \ = \ \hbox{constant}
\end{equation}
 is constant across all the possible choices of the starting point.
 Second, due to the absence of collision, the events that different space-time points live forever are independent:
\begin{equation}
\label{eq:extinction-dual-2}
 \begin{array}{l}
   P \,((w_1, t) \ \hbox{and} \ (w_2, t) \ \hbox{live forever}) \vspace*{4pt} \\ \hspace*{50pt} = \
   P \,((w_1, t) \ \hbox{lives forever}) \ P \,((w_2, t) \ \hbox{lives forever}). \end{array}
\end{equation}
 Third, by construction of the process, for all $s > 0$,
\begin{equation}
\label{eq:extinction-dual-3}
 \begin{array}{l}
   (w, t) \ \hbox{lives forever if and only if} \vspace*{2pt} \\ \hspace*{30pt}
            \hbox{there exists} \ B \in \hat \zeta_s (w, t) \ \hbox{such that} \ (u, t - s) \ \hbox{lives forever for all} \ u \in B. \end{array}
\end{equation}
 The lemma follows from \eqref{eq:extinction-dual-1}--\eqref{eq:extinction-dual-3} combined with a first-step analysis.
 The idea is to condition on whether the first event occurring at point $w$ before time $t$ is a birth or a death.
 The first time before time $t$ a birth or death occur at $w$ are given respectively by
 $$ \begin{array}{rcl}
     s_+ & := & t - \max \,(\{\mathbf b_n (w) : n \in \Z \} \,\cap \,(- \infty, t)) \vspace*{2pt} \\
     s_- & := & t - \max \,(\{\mathbf d_n (w) : n \in \Z \} \,\cap \,(- \infty, t)). \end{array} $$
 Letting $s_0 = \min \,(s_+, s_-)$, we have
\begin{equation}
\label{eq:extinction-dual-4}
  \hat \zeta_{s_0} (w, t) = \{\{w \}, \{w_1, w_2 \} \} \ \hbox{when} \ s_0 = s_+ \quad \hbox{and} \quad
  \hat \zeta_{s_0} (w, t) = \varnothing \ \hbox{when} \ s_0 = s_-
\end{equation}
 where points $w_1$ and $w_2$ are the parents of point $w$ at dual time $s_+$.
 In particular, combining the two properties of the process \eqref{eq:extinction-dual-3}--\eqref{eq:extinction-dual-4}, we get:
 $(w, t)$ lives forever if and only if
\begin{enumerate}
 \item $s_0 = s_+$ and \vspace*{4pt}
 \item ($(w, t - s_0)$ lives forever) or ($(w_1, t - s_0)$ and $(w_2, t - s_0)$ both live forever).
\end{enumerate}
 The first event has probability
\begin{equation}
\label{eq:extinction-dual-5}
  P \,(s_0 = s_+) \ = \ (a + b)(1 + a + b)^{-1}.
\end{equation}
 Using \eqref{eq:extinction-dual-1}--\eqref{eq:extinction-dual-2}, we also have
\begin{equation}
\label{eq:extinction-dual-6}
 \begin{array}{l}
   P \,((w, t - s_0) \ \hbox{lives forever or} \ (w_1, t - s_0) \ \hbox{and} \ (w_2, t - s_0) \ \hbox{both live forever}) \vspace*{4pt} \\ \hspace*{25pt} = \
   1 - P \,((w, t - s_0) \ \hbox{dies out}) \,P \,((w_1, t - s_0) \ \hbox{or} \ (w_2, t - s_0) \ \hbox{dies out}) \vspace*{4pt} \\ \hspace*{25pt} = \
   1 - P \,((w, t - s_0) \ \hbox{dies out}) \vspace*{4pt} \\ \hspace*{80pt}
      (1 - P \,((w_1, t - s_0) \ \hbox{and} \ (w_2, t - s_0) \ \hbox{both live forever})) \vspace*{4pt} \\ \hspace*{25pt} = \
   1 - (1 - \rho)(1 - \rho^2) \ = \ \rho^2 \,(1 - \rho) + \rho.
 \end{array}
\end{equation}
 Finally, combining \eqref{eq:extinction-dual-5}--\eqref{eq:extinction-dual-6}, we deduce that
 $$ \begin{array}{rcl}
     (1 + a + b) \,\rho & := & (1 + a + b) \ P \,((w, t) \ \hbox{lives forever}) \vspace*{4pt} \\
                        &  = & (1 + a + b) \ P \,((w, t) \ \hbox{lives forever} \ | \,s_0 = s_+) \,P \,(s_0 = s_+) \vspace*{4pt} \\
                        &  = & (a + b) \,(\rho^2 \,(1 - \rho) + \rho) \end{array} $$
 from which it follows that
 $$ (a + b) \,\rho^2 \,(1 - \rho) - \rho \ = \ (a + b) \,(\rho^2 \,(1 - \rho) + \rho) - (1 + a + b) \,\rho \ = \ 0. $$
 In particular, $\rho = 0$ whenever $a + b < 4$.
\end{proof} \\ \\
 Theorem \ref{th:extinction} directly follows from Lemmas~\ref{lem:extinction-collision}--\ref{lem:extinction-dual} and the duality relationship \eqref{eq:duality-2}.
\begin{lemma} --
\label{lem:extinction}
 Let $\w \in \Z \times K_N$ and $a + b < 4$.
 For all $\ep > 0$, there exists $t$ such that
 $$ P \,(\eta_t (\w) = 1) \ \leq \ \ep \quad \hbox{for all $N$ sufficiently large}. $$
\end{lemma}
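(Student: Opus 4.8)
The plan is to read off the statement from the duality relationship \eqref{eq:duality-2} together with the two preparatory Lemmas~\ref{lem:extinction-collision} and~\ref{lem:extinction-dual}, the only delicate point being the order in which the time $t$ and the size $N$ are chosen. By \eqref{eq:duality-2}, the event $\{\eta_t (\w) = 1\}$ is contained in $\{\hat \eta_t (\w, t) \neq \varnothing\}$, so it suffices to bound the probability that the dual process started at $(\w, t)$ is still nonempty at dual time $t$. Decomposing according to whether a collision occurs in the dual process before dual time $t$, and using the coupling identity \eqref{eq:duality-coupling} on the no-collision event, one gets
$$ P \,(\eta_t (\w) = 1) \ \leq \ P \,(\hbox{collision before dual time } t) \ + \ P \,(\hat \zeta_t (w, t) \neq \varnothing). $$
The whole argument is then a matter of making each of the two terms on the right at most $(1/2)\,\ep$.

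First I would dispose of the second term, which does \emph{not} involve $N$. The key observation is that once $\hat \zeta_s (w, t)$ becomes the empty collection it stays empty, so the event $\{\hat \zeta_s (w, t) = \varnothing\}$ is nondecreasing in $s$; moreover, since the underlying Poisson processes are assumed defined for all times (including negative ones), the law of the whole trajectory $(\hat \zeta_s (w, t))_{s \geq 0}$ does not depend on $t$. Hence $t \mapsto P \,(\hat \zeta_t (w, t) = \varnothing)$ is nondecreasing, and by continuity of probability along increasing events it converges, as $t \to \infty$, to $P \,((w, t) \ \hbox{dies out})$, which equals $1$ by Lemma~\ref{lem:extinction-dual} since $a + b < 4$. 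Consequently, given $\ep > 0$, I can fix once and for all a time $t = t(\ep, a, b)$, independent of $N$, for which $P \,(\hat \zeta_t (w, t) \neq \varnothing) \leq (1/2)\,\ep$.

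With this $t$ now frozen, I would invoke Lemma~\ref{lem:extinction-collision} to obtain $N_0$ such that, for all $N \geq N_0$, the probability that a collision occurs in the true dual process before dual time $t$ is at most $(1/2)\,\ep$. Plugging both bounds into the displayed inequality yields $P \,(\eta_t (\w) = 1) \leq \ep$ for all $N \geq N_0$, which is exactly the claim, and since the inequality in \eqref{eq:duality-2} does not depend on the initial configuration the same $t$ works from the all-occupied state, giving Theorem~\ref{th:extinction}.

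I do not expect any genuinely hard step: all the analytic content has already been absorbed into Lemmas~\ref{lem:extinction-collision}--\ref{lem:extinction-dual}. The one point that deserves to be spelled out — and the reason (as the paper already notes) that the statement only \emph{suggests} rather than implies extinction — is the quantifier order: $t$ must be selected first using only the collision-free process $\hat \zeta$, which is genuinely $N$-free, and $N_0$ is chosen afterwards as a function of $t$. A secondary, equally routine point to check is the monotonicity-plus-continuity identification of $\lim_{t \to \infty} P \,(\hat \zeta_t (w, t) = \varnothing)$ with $P \,((w, t) \ \hbox{dies out}) = 1$, which rests on the time-translation invariance of the graphical representation.
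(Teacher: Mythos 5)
Your proposal is correct and follows essentially the same route as the paper's own proof: bound $P(\eta_t(\w)=1)$ via the duality relation \eqref{eq:duality-2}, split on the occurrence of a collision, use the coupling \eqref{eq:duality-coupling} on the no-collision event, fix $t$ first from Lemma~\ref{lem:extinction-dual} and only then take $N$ large via Lemma~\ref{lem:extinction-collision}. The only difference is that you spell out the monotone-convergence/translation-invariance argument identifying $\lim_{t\to\infty}P(\hat\zeta_t(w,t)=\varnothing)$ with $P((w,t)\ \hbox{dies out})=1$, a step the paper leaves implicit when it asserts the existence of such a $t$.
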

\begin{proof}
 Since $a + b < 4$, according to Lemma~\ref{lem:extinction-dual}, the probability that any given point $(w, t)$ lives forever is equal
 to zero hence there exists $t$ fixed from now on such that
\begin{equation}
\label{eq:extinction-1}
  P \,(\hat \zeta_t (w, t) \neq \varnothing) \ \leq \ (1/2) \ep.
\end{equation}
 In addition, time $t$ being fixed, Lemma~\ref{lem:extinction-collision} implies that
\begin{equation}
\label{eq:extinction-2}
  P \,(\hbox{collision before dual time $t$}) \ \leq \ (1/2) \ep \quad \hbox{for all $N$ large}.
\end{equation}
 Combining \eqref{eq:extinction-1}--\eqref{eq:extinction-2} with \eqref{eq:duality-coupling}, we obtain
 $$ \begin{array}{rcl}
      P \,(\hat \eta_t (\w, t) \neq \varnothing) & = &
      P \,(\hat \eta_t (\w, t) \neq \varnothing \ \hbox{and no collision before dual time $t$}) \vspace*{4pt} \\ && \hspace*{25pt} + \
      P \,(\hat \eta_t (\w, t) \neq \varnothing \ \hbox{and collision before dual time $t$}) \vspace*{4pt} \\ & \leq &
      P \,(\hat \eta_t (\w, t) \neq \varnothing \ | \ \hbox{no collision before dual time $t$}) \vspace*{4pt} \\ && \hspace*{25pt} + \
      P \,(\hbox{collision before dual time $t$}) \vspace*{4pt} \\ & = &
      P \,(\hat \zeta_t (w, t) \neq \varnothing) + P \,(\hbox{collision before dual time $t$}) \ \leq \ \ep. \end{array} $$
 The theorem follows from \eqref{eq:duality-2} and the previous inequality.
\end{proof}


\end{document}